\theoremstyle{thmstyleone}%
\newtheorem{theorem}{Theorem}
\newtheorem{proposition}{Proposition}%
\newtheorem{lemma}{Lemma}
\newtheorem{Assumption}{Assumption}
\theoremstyle{thmstyletwo}%
\newtheorem{remark}{Remark}%
\theoremstyle{thmstylethree}%
\newtheorem{definition}{Definition}%
\begin{document}

\title[Article Title]{Theory and Design of Extended PID Control for Stochastic Systems with Structural
Uncertainties}


\author[1]{\fnm{Baoyou} \sur{Qu}}\email{qubaoyou@sdu.edu.cn}

\author*[2]{\fnm{Cheng} \sur{Zhao}}\email{zhaocheng@amss.ac.cn}
\equalcont{These authors contributed equally to this work.}

\affil[1]{\orgdiv{Research Center for Mathematics and Interdisciplinary Sciences}, \orgname{Shandong University}, \orgaddress{\city{Qingdao}, \postcode{266237}, \country{China}}}

\affil[2]{\orgdiv{State Key Laboratory of Mathematical
Sciences, Academy of Mathematics and Systems Science}, \orgname{Chinese Academy of
Sciences}, \orgaddress{\city{Beijing}, \postcode{100190}, \country{China}}}



\abstract{Since the classical proportional-integral-derivative (PID) controller has continued to be the most widely used feedback methods in engineering systems by far, it is crucial to investigate the working mechanism of PID in dealing with nonlinearity, uncertainty and random noises. Recently, Zhao and Guo (2022) \cite{zhao2020} has established the global stability of PID control for a class of uncertain nonlinear control systems with relative degree two without random perturbations. In this paper, we will consider a more general class of  nonlinear stochastic systems with an arbitrary relative degree $n$, and discuss the stability and design of extended PID controller (a natural extension of PID). We demonstrate that,   the closed-loop control systems will be globally stable in mean square with bounded tracking errors provided the extended PID parameters are selected  from an $(n+1)$-dimensional unbounded set, even if both the system nonlinear drift and diffusion terms contain a wide range of structural uncertainties. Moreover, the steady-state tracking error is proved to be proportional to the noise intensity at the setpoint, which can also be made arbitrarily small by choosing the controller parameters suitably large.}


\keywords{nonlinear stochastic systems, PID control, analytic design, uncertain structure,  global stability, random perturbations}



\maketitle

\section{Introduction}
As the most widespread control method in the industrial automation domain, the PID controller is favored by various control practitioners because of its simple feedback structure, strong robustness and ease of implementation. In fact, PID controller is often viewed as a simplistic computational control algorithm, and is considered to be ``bread and butter'' of control engineering \cite{astrom2001}. A recent survey also shows that the PID controller has much
higher impact rating than most advanced control technologies, and will demonstrate its sustained vitality \cite{samad2017}. Ever since the emergence of PID controller in the early 20th century, tremendous efforts have been made in both academia and industry to improve the performance of PID loops, and many interesting tuning methods have emerged (see, e.g., \cite{chien1990,killingsworth2006,ziegler1993}), of which the Ziegler-Nichols rules are perhaps the most eminent. However, the de facto situation is that poorly tuned PID controlled loops are quite common in practice (see, e.g., \cite{bialkowski1993,ender1993,yu2006,odwyer2006}), and almost all the existing tuning methods are case dependent and heavily rely on experience or experiment or both. As mentioned in \cite{somefun2021}, tuning the PID algorithm for accurate and stable closed-loop control becomes a NP-Hard Problem. Moreover, there is no doubt that almost all of the practical control systems are subjected to nonlinearity, uncertainties and various disturbances, while the existing theoretical investigation on PID are mainly focused on linear systems \cite{astrom2006,wang2020,chen2022}, or affine nonlinear systems without random disturbances (see, e.g., \cite{romero2018,fliess2013,zhao2017,song2017,zhang2019,guo2020,zhao2021,guo2021}).

In recent years, some rigorous mathematical investigations on the theory and design of PID have been made for  uncertain nonlinear systems (see e.g.,\cite{zhao2017,cong2017,zhang2022pid,zhao2021,zhao2020,guo2021,zhao2023}). For instance, in \cite{zhao2017}, Zhao and Guo have shown that for a class of affine nonlinear uncertain systems with relative degree two, the three PID parameters can be designed to stabilize and regulate the control systems globally, provided that the nonlinear functions satisfy a Lipschitz condition. Moreover, necessary and sufficient conditions for the selection of PID parameters have also been provided in \cite{zhao2017}. To the best of our knowledge, the global stability of a class of PID controlled nonaffine nonlinear systems without random disturbances was established for the first time in \cite{zhao2020}. Then, the authors further extended the deterministic results to stochastic cases in \cite{zhang2022pid}. However, it is worth mentioning that only nonlinear systems with relative degree two are considered in the work of \cite{zhao2017,zhang2019,zhao2020,zhang2022pid,zhao2023}. In addition, in the existing study of PID controlled stochastic systems, the diffusion term is usually assumed to vanish at the setpoint (see \cite{cong2017,zhang2022pid}), which may be unrealistic for many practical systems.
	
It is natural to ask whether this ubiquitous controller can deal with more general nonlinear uncertain control systems.
In view of this, we will discuss the global stabilization problem for a class of nonaffine stochastic systems with a general relative degree $n$, and extend the corresponding deterministic results in \cite{zhao2020} and stochastic case in \cite{zhang2022pid} by a refined method. 
In contrast to most of the existing study of PID controlled stochastic systems, this article does not require the diffusion term to vanish at the setpoint, which has weakened the assumption used in the literature and  seems to be more reasonable for practical situations. On the other hand, for nonlinear stochastic control systems, particularly when the relative order $n$ is large, the extended PID parameters possess multiple degrees of freedom. Consequently, providing a simple analytical design formula for these parameters is typically a very challenging task.  To the best of our knowledge,  existing design methods typically rely on high-gain qualitative approaches, necessitating sufficiently large parameter values \cite{khalil2000,zhao2021}.

In this article, by employing a simple (but powerful) algebraic criteria of polynomial stability, we will establish the global stability of the closed-loop control systems and provide analytic design methods for the extended PID parameters. It is  worth to mention that the design rules given in this
article are quite simple and are not necessarily to be of high gain, which improves the existing qualitative design methods in the literature.  Additionally, both the design and analysis of MIMO nonlinear stochastic systems, as studied in this article, pose greater challenges compared to previous studies due to the system's structural uncertainties and the strong coupling between input and output. Indeed, the proofs of the main theorems reveal that constructing Lyapunov functions is far from straightforward, owing to the intricate structure of the corresponding positive definite matrices.  Our results also demonstrate  that the selection of extended PID parameters has wide flexibility and does not necessarily
require high gain, which improves the existing qualitative methods for parameter design in the literature, see e.g., \cite{khalil2000,guo2013}. Moreover, we will show that the steady-state tracking error can be made arbitrarily small by choosing the controller parameters suitably large.

The remainder of this paper is organized as follows. The problem formulation and main assumptions are provided in Section 2. The global stability together with parameter design of the extended PID controlled stochastic systems are discussed in Section 3. The tracking performances of the closed-loop control system are analyzed in Section 4.
Section 5 will provide proofs of the main results. Simulations are provided in Section 6, and some concluding remarks will be given at the end of this paper.

\section{Mathematical Formulation}
\subsection{Notations}
Denote $\mathbb{R}^{m\times n}$ as the space of $m\times n$ real matrices, $|x|$  as the Euclidean norm of a vector $x$, and $a^{\top}$ as the transpose of a vector or a matrix $a$.
The operator norm and the Hilbert-Schmidt norm of a matrix $P \in \mathbb{R}^{m\times n}$ are defined by $\|P\|=\sup_{|x|\leq 1,x\in \mathbb{R}^n}|Px|$ and $\|P\|_{{\rm HS}}=[\text{tr}(PP^{\top})]^{\frac{1}{2}}$ respectively, where ${\rm tr}(\cdot)$ represents the trace of a matrix.  For two symmetric matrices $S_1$ and $S_2$ in $\mathbb{R}^{n\times n}$, the notation $S_1>S_2$ or $S_2<S_1$ implies that  $S_1-S_2$ is a positive definite matrix; $S_1\geq S_2$ or $S_2\le S_1$ implies  that $S_1-S_2$ is a positive semi-definite matrix.
For a function $f(x_1,x_2\cdots,x_k)\in C^{1}(\mathbb{R}^{n_1}\times\mathbb{R}^{n_2}\times\cdots\times\mathbb{R}^{n_k},\mathbb{R}^{m})$, let
$\frac{\partial f}{\partial {x_i}}(x_1,\cdots,x_k)$ denote  the $m\times n_i$ Jacobian matrix of $f$ with respect to $x_i$ at the point $(x_1,\cdots,x_k)$.  For a random variable $X$, let $\mathbf E(X)$ denote its expectation.

\subsection{The Control System}
In this paper, we are concerned with the following nonaffine stochastic system with relative degree $n$ ($n\geq 1$):
\begin{equation}
	\label{n-order control system}
	\begin{cases}
		~\mathrm{d}x_1(t)\!\!&=~x_2(t)\mathrm{d}t,\\
		&~\vdots \\
		\mathrm{d}x_{n\!-\!1}(t)\!&=~x_n(t)\mathrm{d}t,\\
		~\mathrm{d}x_n(t)&=~f(x(t); u(t))\mathrm{d}t+g(x(t))\mathrm{d}B_t,\\
~y(t)\!\!&=~x_1(t),~~x_1\in\mathbb{R}^d,~u\in\mathbb{R}^d,
	\end{cases}
\end{equation}
where the state variables $x(t)=(x_1(t),\cdots,x_n(t))\in \mathbb{R}^{nd}$ are available for control design, $u(t)\in\mathbb{R}^d$ is the control input, $y(t)\in\mathbb{R}^d$ is the system output, $f(x;u): \mathbb{R}^{nd}\times \mathbb{R}^{d}\to \mathbb{R}^d$, $g: \mathbb{R}^{nd}\to \mathbb{R}^{d\times m}$ are continuous nonlinear functions and $(B_t)_{t\geq 0}$ is an $m$-dimensional standard Brownian motion.

Our control objective is  to \emph{globally stabilize} the system (\ref{n-order control system}) in the sense that
$\sup_{t\geq 0}\mathbf{E}[|x(t)|^2+|u(t)|^2]<\infty$ and, at the same time,  to make the output $y(t)$ track a desired setpoint $y^{*}\in\mathbb{R}^d$, under the condition that the nonlinear functions $f(\cdot)$ and $g(\cdot)$ contain uncertainty.

This paper is motivated by our recent theoretical investigation on the classical PID control for  system (\ref{n-order control system}) with relative degree two, see  \cite{zhao2020,zhang2022pid}. In fact, for the case $n=2$, it was shown in \cite{zhang2022pid} that the classical PID control
\begin{align*}
u(t)=K_{\!P} ~\!e(t)+K_{\!I} \int_{0}^{t}\!\! e(s)\mathrm{d}s+K_{\!D} \dot{e}(t),~e(t)=y^*-y(t)
\end{align*}
 can globally stabilize system (\ref{n-order control system}) in mean square, provided that the partial derivatives of the system nonlinear functions are bounded and the diffusion term $g$  vanishes at the setpoint. For dynamical systems with a general relative degree $n$, we will in this paper consider the extended PID controller, which is the natural extension of PID and is  defined by
\begin{equation}\label{extended pid controller}\begin{split}
u(t)\!=&k_{1} e(t)\!+\!k_{0} \int_{0}^{t}\!\! e(s)\mathrm{d}s\!+k_{2} \dot{e}(t)+\!\cdots\!+k_{n} e^{(n\!-\!1)}(t)\\
e(t)\!=&y^*-y(t)
\end{split}
\end{equation}
where  $e(t)$ is the regulation error,  $\dot{e}(t)$, $\cdots$, $e^{(n-1)}(t)$ are the derivatives of $e(t)$ up to $(n-1)^{\text{th}}$ order, and $k_0$, $\cdots$, $k_n$ are called the extended PID parameters to be designed.

It should be noted that if the relative degree of system (\ref{n-order control system}) is two (or one), the extended PID control (\ref{extended pid controller}) reduces to the well-known PID (or PI) controller.
\subsection{Assumptions}
Now, we introduce two basic assumptions for the system nonlinear functions $f$ and $g$.
\begin{Assumption}\label{A 1}
The drift term $f(x; u)$ is Lipschitz continuous with respect to $x$, uniformly in $u$, i.e.,
\begin{equation}\label{3.0}
	|f(x;u)-f(y;u)|\leq L|x-y|,\  x,y\in \mathbb{R}^{nd}, ~~u\in \mathbb{R}^d.
\end{equation}
Besides, the diffusion term $g(x)$ satisfies
\begin{equation}\label{4.0}
	\|g(x)-g(y)\|_{{\rm HS}}\leq M|x-y|, \  x,y\in \mathbb{R}^{nd},
\end{equation}
where the norm $\|\cdot\|_{{\rm HS}}$ is  defined by $\|P\|_{{\rm HS}}=[\text{\rm tr}(PP^{\top})]^{\frac{1}{2}}$.
\end{Assumption}

Condition (\ref{3.0}) on function $f(x;u)$ is a standard assumption for establishing global results,  which is also used extensively  in ordinary differential equation to ensure the global existence and uniqueness of solutions.
We next give some explanations to condition (\ref{4.0}). In most existing studies on PID control for nonlinear stochastic systems, the Brownian motion $(B_t)_{t\geq 0}$ is usually assumed to be one dimensional,  and the diffusion term $g$ is a $C^1$ function with bounded partial derivatives \cite{zhang2022pid,zhao2023}. In this paper, we consider the more general case where the Brownian motion $(B_t)_{t\geq 0}$ can be high dimensional, and the diffusion term $g$ is no longer  assumed to be a $C^1$ function. Besides, the boundedness of the partial derivatives of $g$ is replaced by the weaker condition (\ref{4.0}). It is also worth noting that the diffusion function $g$ is not required to be zero at the point $z^*:=(y^*,0,\cdots,0)\in\mathbb{R}^{nd}$, which is a more reasonable condition  and can be satisfied by more scenarios.

In order for the control input to have the ability to influence the state of the control system, the control gain matrix $\frac{\partial f}{\partial{u}}$ should not vanish. This inspires us to introduce the following assumption:
\begin{Assumption}\label{A 2}
The $d\times d$ Jacobian matrix $\frac{\partial f}{\partial{u}}$ has the following lower bound:
\begin{align}\label{343}
\frac{1}{2}\left[\frac{\partial f}{\partial u}+\Big(\frac{\partial f}{\partial u}\Big)^\top\right](x;u)\geq I_d,
  ~~(x;u)\in \mathbb{R}^{nd}\times \mathbb{R}^d,
\end{align}
where $I_d$ is the $d\times d$ identity matrix.
\end{Assumption}

By Assumption \ref{A 2} and
\cite[Proposition 4.1]{zhao2020}, one can see that for any given $y^*\in \mathbb{R}^d$, there exists a unique $u^*\in \mathbb{R}^d$ such that
\begin{equation}\label{unique u^* w.r.t y^*}
	f(y^*,0,\cdots,0;u^*)=0.
\end{equation}

\begin{remark}
We remark that if the control system (\ref{n-order control system}) is affine nonlinear and free from random disturbances, the extended PID controller (\ref{extended pid controller}) has the ability to stabilize the system globally (or semi-globally), provided that the system nonlinear functions satisfy certain growth conditions, see e.g., \cite{guo2021,zhao2020}.
\end{remark}

\section{Global Mean Square Stability}
In this section, we will demonstrate that for the nonaffine stochastic system (\ref{n-order control system}), the extended PID control (\ref{extended pid controller}) can still be designed to globally stabilize the system.
Before presenting the main results, we first introduce two definitions.

\begin{definition}
We say the stochastic system (\ref{n-order control system}) controlled by the extended PID (\ref{extended pid controller}) is globally stable in the sense of mean square, if for all initial state $x(0)\in\mathbb{R}^{nd}$, the solution of the closed-loop system satisfies
\begin{equation*}
    \sup_{t\geq 0}\mathbf{E}\bigl[|x(t)|^2+|u(t)|^2\bigr]<\infty.
\end{equation*}
\end{definition}

\begin{definition}\label{def 3.1}
For given nonnegative constants $L$ and $M$, we say that the $(n+1)$-parameters $k_0$, $k_1$, $\cdots,$ $k_n$ are admissible for the two-tuples $(L,M)$ if they are all positive, and there exists a positive definite matrix $P\in\mathbb{R}^{(n+1)\times (n+1)}$, such that
 \begin{align}\label{P}
 &P\ \!\![0,\cdots,0,1]^{\top}=~\![k_0,\cdots,k_{n-1},k_n]^{\top},\\
 &~\!\!PA+A^{\top}P + 2\bar kI_{n+1}<0,\label{P1}
\end{align}
 where $\bar k:=\sum_{i=0}^n k_iL+ k_n M^2$ and
\begin{align}\label{A}
A:=\begin{bmatrix}
0&1&0&\cdots&0\\
\vdots&\vdots&\vdots&\ddots&\vdots\\
0&0&0&\cdots&1\\
-k_0&-k_1&-k_2&\cdots&-k_n
\end{bmatrix}.\end{align}
\end{definition}
\vskip 0.2cm
For notational simplicity, let us denote
\begin{align}\label{z*}
z^*:=(y^*,0,\cdots,0)\in\mathbb{R}^{nd},
\end{align}
where $y^*\in\mathbb{R}^d$ is the setpoint, then we have the following theorem.


\begin{proposition}\label{Theorem 3.1}
Suppose  Assumptions \ref{A 1} and \ref{A 2} hold, and the $(n+1)$-parameters $k_0$, $k_1$, $\cdots,$ $k_n$ are admissible for the two-tuples $(L,M)$. Then,  the closed-loop system (\ref{n-order control system})-(\ref{extended pid controller})  will be globally stable. Moreover, there exist some positive constants $C_1$, $C_2$ and $\lambda$ depending  on $(k_0,\cdots,k_n,L,M)$ only, such that for all $t\geq 0,~x(0)\in\mathbb{R}^{nd},$
	\begin{equation}\label{tracking performance}
		\mathbf{E}\left[|x(t)\!-\!z^*|^2\right]\leq C_1\left[|x(0)-z^*|^2+|u^*|^2\right]e^{-\lambda t}+ C_2\|g(z^*)\|_{{\rm HS}}^2,
	\end{equation}
where $z^*$ and $u^*$ are defined in (\ref{z*}) and (\ref{unique u^* w.r.t y^*}) respectively.
\end{proposition}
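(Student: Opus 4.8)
The plan is to recast the closed-loop system as an autonomous It\^o diffusion on the augmented state space $\mathbb R^{(n+1)d}$ and to build a quadratic Lyapunov function from the admissible matrix $P$. First I would introduce the integral-of-error coordinate together with the error derivatives: write $\xi_0(t)=\int_0^t e(s)\,\mathrm ds$ and $\xi_j=e^{(j-1)}$ for $j=1,\dots,n$, so that (\ref{extended pid controller}) reads $u=\sum_{i=0}^n k_i\xi_i$. Since $\dot e=-x_2,\ \ddot e=-x_3,\dots$ and $\dot\xi_{j-1}=\xi_j$, the shifted state $\eta:=(\xi_0-\tfrac{u^*}{k_0},\xi_1,\dots,\xi_n)$ satisfies the companion-form SDE
\begin{equation*}
\mathrm d\eta=\bigl[(A\otimes I_d)\eta+q\otimes R\bigr]\mathrm dt-\bigl(q\otimes g(x)\bigr)\mathrm dB_t,
\end{equation*}
where $q:=[0,\dots,0,1]^\top\in\mathbb R^{n+1}$, $A$ is the matrix in (\ref{A}), $\tilde u:=u-u^*=\sum_{i=0}^n k_i\eta_i$ and $R:=\tilde u-f(x;u)$. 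The shift by $u^*/k_0$ is chosen precisely so that $\eta\equiv 0$ corresponds to $x\equiv z^*$ and $u\equiv u^*$, which is consistent because of (\ref{unique u^* w.r.t y^*}); note $x-z^*=(-\eta_1,\dots,-\eta_n)$ so $|x-z^*|^2\le|\eta|^2$.

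Next I would take $V(\eta)=\eta^\top(P\otimes I_d)\eta$ and compute its generator. The decisive algebraic point is that condition (\ref{P}) forces the last column (hence, by symmetry, the last row) of $P$ to equal $(k_0,\dots,k_n)^\top$; consequently $Pq=[k_0,\dots,k_n]^\top$, $q^\top Pq=k_n$, and $\tilde u$ is exactly the last $\mathbb R^d$-block of $(P\otimes I_d)\eta$. A direct It\^o computation with this Kronecker structure then gives
\begin{equation*}
\mathcal L V=\eta^\top\bigl[(PA+A^\top P)\otimes I_d\bigr]\eta+2\tilde u^\top R+k_n\|g(x)\|_{\rm HS}^2,
\end{equation*}
the last term arising from $\mathrm{tr}\bigl[(q\otimes g)^\top(P\otimes I_d)(q\otimes g)\bigr]=(q^\top Pq)\|g\|_{\rm HS}^2=k_n\|g\|_{\rm HS}^2$. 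To handle $2\tilde u^\top R$ I would write $f(x;u)=\tilde G\,\tilde u+\Delta$ with $\tilde G=\int_0^1\frac{\partial f}{\partial u}(x;u^*+s\tilde u)\,\mathrm ds$ and $\Delta=f(x;u^*)-f(z^*;u^*)$; Assumption \ref{A 2} integrated along the segment yields $\tilde u^\top\tilde G\,\tilde u\ge|\tilde u|^2$, which cancels the $2|\tilde u|^2$ that $R$ produces, while (\ref{3.0}) gives $|\Delta|\le L|x-z^*|$.

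I would then combine the estimates. Distributing $\tilde u=\sum_i k_i\eta_i$ across $\Delta$ and applying the Young inequality $2|x-z^*|\,|\eta_i|\le|x-z^*|^2+|\eta_i|^2$ termwise produces $2\tilde u^\top R\le 2\bigl(\sum_i k_iL\bigr)|\eta|^2$ (crucially \emph{linear} in $L$), while (\ref{4.0}) with weight one gives $k_n\|g(x)\|_{\rm HS}^2\le 2k_nM^2|\eta|^2+2k_n\|g(z^*)\|_{\rm HS}^2$. The total perturbation coefficient is thus exactly $2\sum_i k_iL+2k_nM^2=2\bar k$, so it is precisely cancelled by the $2\bar kI_{n+1}$ term in (\ref{P1}); since that inequality is strict there is $\delta>0$ with $PA+A^\top P+2\bar kI_{n+1}\le-\delta I_{n+1}$, leaving
\begin{equation*}
\mathcal L V\le-\delta|\eta|^2+2k_n\|g(z^*)\|_{\rm HS}^2\le-\lambda V+C\|g(z^*)\|_{\rm HS}^2,\qquad \lambda=\tfrac{\delta}{\lambda_{\max}(P)}.
\end{equation*}
A standard stopping-time/Dynkin argument (truncating at exit times $\tau_R$, letting $R\to\infty$ by Fatou, the same a priori bound also giving non-explosion of the locally Lipschitz SDE) yields $\frac{\mathrm d}{\mathrm dt}\mathbf E[V]\le-\lambda\mathbf E[V]+C\|g(z^*)\|_{\rm HS}^2$, and Gronwall together with $\lambda_{\min}(P)|\eta|^2\le V\le\lambda_{\max}(P)|\eta|^2$ gives $\mathbf E[|\eta(t)|^2]\le \tfrac{\lambda_{\max}(P)}{\lambda_{\min}(P)}|\eta(0)|^2e^{-\lambda t}+C_2\|g(z^*)\|_{\rm HS}^2$. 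Using $|x-z^*|^2\le|\eta|^2$, $|\eta(0)|^2\le|x(0)-z^*|^2+|u^*|^2/k_0^2$, and $|u|^2\le 2|u^*|^2+C|\eta|^2$, one obtains both (\ref{tracking performance}) and $\sup_{t\ge0}\mathbf E[|x(t)|^2+|u(t)|^2]<\infty$.

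The step I expect to be the main obstacle is the constant bookkeeping of the third paragraph. Matching the perturbation bounds to $\bar k$ hinges on the identities $\tilde u=(P\otimes I_d)\eta|_{\text{last block}}$ and $q^\top Pq=k_n$ forced by (\ref{P}), together with the cancellation supplied by Assumption \ref{A 2}; and obtaining the drift contribution linear in $L$ (rather than $L^2$) requires distributing $\tilde u$ over $\Delta$ with the Young weights chosen so the cross terms collapse to $\sum_i k_iL$. This is exactly the delicate interplay between the structure of the positive definite $P$ and the margin that the strict inequality (\ref{P1}) is calibrated to provide, and it is where I expect the real work to lie.
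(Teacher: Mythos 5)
Your proposal is correct and takes essentially the same approach as the paper's proof: the same shifted augmented state (yours is just the negative of the paper's $Y$, with the same $u^*/k_0$ offset), the same Lyapunov function built from $P\otimes I_d$, the same mean-value factorization of $f$ in $u$ combined with Assumption \ref{A 2}, and the same bookkeeping that matches the Lipschitz and diffusion perturbations exactly to the $2\bar k$ margin in (\ref{P1}). The only cosmetic difference is that the paper packages your stopping-time/Dynkin/Gronwall step as a citation to Khasminskii's theorem (Lemma \ref{lemma upper bound} in the Appendix).
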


The proof of Proposition \ref{Theorem 3.1} is given in Section \ref{sec V}, which employs  the Lyapunov method for stochastic dynamical systems.  It indicates that the extended PID control (\ref{extended pid controller}) can stabilize the nonlinear stochastic system (\ref{n-order control system}) globally, provided that the extended PID parameters are admissible for the two-tuples $(L,M)$. Therefore, it is crucial to find out a set of admissible control parameters in practical applications. To this end, we next give a general criterion to determine when $k_0$, $k_1$, $\cdots,$ $k_n$ are admissible.

\begin{proposition}\label{pro1} Let $L$ and $M$ be two given nonnegative constants. Then, the $(n+1)$-parameters $k_0$, $k_1$, $\cdots,$ $k_n$ are admissible for the two-tuples $(L,M)$, if all of them are positive and the following inequality hold:
\begin{align}\label{solution}
\min\Big\{k_0^2,~k_{i-1}^2\!-\!2k_{i-2}k_{i},~\!  2\le i\le n,~ k_n^2\!-\!k_{n-1}\Big\}>\bar k,
\end{align}
where $\bar k:=\sum_{i=0}^n k_iL+ k_n M^2$.
\end{proposition}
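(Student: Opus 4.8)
The plan is to produce a single matrix $P>0$ fulfilling both requirements of Definition \ref{def 3.1}, and to see condition (\ref{solution}) emerge exactly as the price of (\ref{P1}). Let $e_1,\dots,e_{n+1}$ denote the standard basis of $\mathbb{R}^{n+1}$. First I would \emph{impose} (\ref{P}) by fixing the last column of $P$ (hence, by symmetry, the last row) to be $[k_0,\dots,k_n]^\top$, and then determine the remaining $\tfrac{n(n+1)}{2}$ entries by demanding that the symmetric matrix $S:=PA+A^\top P$ be \emph{diagonal}. Reading $A$ in (\ref{A}) column by column (its $j$-th column is $e_{j-1}-k_{j-1}e_{n+1}$ for $j\ge 2$ and $-k_0e_{n+1}$ for $j=1$) and using $Pe_{n+1}=[k_0,\dots,k_n]^\top$, the off-diagonal conditions $S_{ij}=0$ turn into the explicit recursion $p_{1,m}=2k_0k_m$ $(1\le m\le n)$ and $p_{i,j-1}=2k_{i-1}k_{j-1}-p_{j,i-1}$ $(2\le i<j)$. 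I would check that, along each anti-diagonal, this recursion unwinds consistently against the prescribed first row and last column, so that a symmetric $P$ satisfying (\ref{P}) with $S$ diagonal exists (and is unique).

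With $S=-2\,\mathrm{diag}(d_1,\dots,d_{n+1})$ in hand, the next step is to compute the $d_i$ and compare them with (\ref{solution}). A direct computation gives $d_1=k_0^2$ and $d_{n+1}=k_n^2-k_{n-1}$ exactly, while for $2\le i\le n$ one finds $d_i=k_{i-1}^2-2k_{i-2}k_i+r_i$, where the remainder $r_i$ is an alternating combination of lower-order products $k_ak_b$ produced by unwinding the recursion down to the boundary data. The lemma I would isolate here is that (\ref{solution}) forces $r_i\ge 0$, so that $d_i\ge k_{i-1}^2-2k_{i-2}k_i$; I expect to prove this by a telescoping chain of inequalities that propagates $k_n^2>k_{n-1}$ and the middle bounds $k_{i-1}^2>2k_{i-2}k_i$ downward to control each product (for small $n$ the recursion terminates in one step and $r_i$ is manifestly positive). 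Granting $r_i\ge 0$, condition (\ref{solution}) yields $d_i>\bar k$ for every $i$, whence $S+2\bar k I_{n+1}=-2\,\mathrm{diag}(d_1-\bar k,\dots,d_{n+1}-\bar k)<0$, which is precisely (\ref{P1}).

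It remains to show $P>0$, and this is where I expect the real difficulty to lie. The clean conceptual route is the equivalence, for a symmetric $P$ solving $A^\top P+PA=-2D$ with $D>0$, between $P>0$ and Hurwitz stability of $A$: by the main inertia theorem such a $P$ is nonsingular, $A$ has no imaginary-axis eigenvalues, and the number of negative eigenvalues of $P$ equals the number of eigenvalues of $A$ in the open right half-plane. Thus $P>0$ is equivalent to the companion matrix $A$, i.e. the polynomial $s^{n+1}+k_ns^n+\cdots+k_1s+k_0$, being Hurwitz, which is where ``the algebraic criterion of polynomial stability'' enters. I would establish Hurwitzness from (\ref{solution}) by observing that the region it cuts out contains no parameters for which $A$ has an imaginary-axis root, so the integer count of right-half-plane roots is constant on each connected component; exhibiting an explicitly Hurwitz configuration in the component of our parameters (or, alternatively, an induction on $n$ via Schur complements of $P$) forces that count to vanish, giving $P>0$. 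The main obstacle is exactly this last step: verifying positive definiteness of the explicit but intricate $P$ through its leading principal minors is feasible only for small $n$, so the argument must route through polynomial stability, and the delicate point is to certify Hurwitzness from the \emph{one-sided, unbounded} inequalities (\ref{solution}) rather than from the full Routh--Hurwitz conditions. Once $P>0$ is secured, together with the already-verified (\ref{P}) and (\ref{P1}) the parameters are admissible.
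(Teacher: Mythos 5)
Your first two stages coincide with the paper's proof: you impose (\ref{P}), force $S=PA+A^{\top}P$ to be diagonal, and your recursion $p_{i,j-1}=2k_{i-1}k_{j-1}-p_{j,i-1}$ with boundary row $p_{1,m}=2k_0k_m$ is exactly the paper's recursive construction ($p_{ij}=2k_ik_{j+1}-p_{i-1,j+1}$, $p_{0j}=2k_0k_{j+1}$, $p_{in}=k_i$ in its zero-based indexing). Your diagonal entries $d_1=k_0^2$, $d_{n+1}=k_n^2-k_{n-1}$, $d_i=k_{i-1}^2-2k_{i-2}k_i+r_i$ match, and your claim $r_i\ge 0$ is precisely the paper's Step 2 induction $0\le p_{ij}\le 2k_ik_{j+1}$, proved there by propagating $k_ik_j>2^{j-i+1}k_{i-1}k_{j+1}$ and $k_n^2>k_{n-1}$ downward, exactly the telescoping you describe; so (\ref{P1}) does follow. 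The reduction of $P>0$ to Hurwitz stability of $A$ via Lyapunov/inertia theory is also the paper's Step 4.

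The genuine gap is the step you yourself flag as the main obstacle: certifying from (\ref{solution}) that the companion matrix $A$ is Hurwitz. The paper settles this with an explicit algebraic stability criterion (Lemma \ref{lemma2}, due to Nie and Xie): for $s^{n+1}+k_ns^n+\cdots+k_0$ the determining coefficients are $\alpha_i=k_{i-1}k_{i+2}/(k_ik_{i+1})$ and $\alpha_{n-1}=k_{n-2}/(k_{n-1}k_n)$, and (\ref{solution}) gives $\alpha_i<1/4$ (multiply $k_i^2>2k_{i-1}k_{i+1}$ by $k_{i+1}^2>2k_ik_{i+2}$) and $\alpha_{n-1}<1/2$, so stability is immediate for $n\ge 4$, with $n=3$ done by Routh--Hurwitz and $n\le 2$ by direct minors. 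Your proposed substitute --- local constancy of the right-half-plane root count on the region cut out by (\ref{solution}) (which is indeed justified by your inertia-theorem observation) plus an explicitly Hurwitz anchor point in each connected component --- leaves two substantive facts unproven: (i) that the region is path-connected, which is not obvious because the quadratic gaps are coupled to $\bar k=\sum_{i}k_iL+k_nM^2$; and (ii) that some point of the region can be certified Hurwitz by independent means. Point (ii) is more treacherous than it looks: the natural anchors, polynomials with widely separated negative real roots (e.g.\ roots $-R^j$ with $R$ large), \emph{fail} (\ref{solution}) once $n$ is large and $L>0$, because $\bar k$ contains $k_0$, the product of all roots, which then dominates the near-boundary gaps such as $k_{n-1}^2-2k_{n-2}k_n$; and for coefficient patterns that do lie in the region (decaying like $k_i=3^{-i(i+1)/2}k$), there is no independent way to verify Hurwitzness short of invoking a criterion of exactly the kind you are trying to avoid. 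So as written the argument does not close, and the missing ingredient is precisely an effective polynomial-stability test such as Lemma \ref{lemma2}.
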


\begin{remark}
First, we remark that for any positive constants $L$ and $M$, inequality (\ref{solution})  always has solutions.
Indeed, let us choose $(n+1)$-parameters as follows:
\begin{equation*}
	k_0=k, \ k_i=3^{-i}k_{i-1}=3^{-\frac{i(i+1)}{2}}k, \ \ 1\leq i\leq n.
\end{equation*}
Since $3^{-n^2}k\leq k_i\leq 3^{-i}k$ for all $i=0,1,\cdots,n$, it is easy to see that
$$\bar k=\sum_{i=0}^n k_i L+ k M^2\leq \bigl(2L+M^2\bigr)k.$$
On the other hand, we know that the following inequalities hold:
\begin{align*}
&k_0^2=k^2,~~~k_n^2-k_{n-1}\geq 3^{-2n^2}k^2-k, \\
&k_i^2-2k_{i-1}k_{i+1}=k_{i-1}k_i\big(3^{-i}-2\times3^{-(i+1)}\big)\geq 3^{-3n^2}k^2,~ \ i=1,\cdots,n-1.
\end{align*}
Therefore, (\ref{solution}) will be satisfied for all large $k$.
Next, we point out that Proposition 2 provides a universal and concise design formula for the $n+1$-controller parameters, regardless of the relative degree $n$ of the system. This means that the selection of extended PID parameters has wide flexibility, since they can be arbitrarily chosen from an $(n+1)$-dimensional unbounded parameter set defined by (\ref{solution}). To the best of our knowledge, this appears to be the first such result in the literature, improving the existing related qualitative design
methods.
\end{remark}

Now, we are in position to present the main result in this paper, which follows immediately from Propositions  \ref{Theorem 3.1}-\ref{pro1}.
\begin{theorem}\label{Theorem 3.2}
Suppose  Assumptions \ref{A 1} and \ref{A 2} hold, and $k_0$, $\cdots,$ $k_n$ are all positive and satisfy (\ref{solution}).
 Then,  the closed-loop system (\ref{n-order control system})-(\ref{extended pid controller})  will be globally stable
and satisfy 
\begin{equation*}
    \mathbf{E}\left[|x(t)\!-\!z^*|^2\right]\leq C_1\left[|x(0)-z^*|^2+|u^*|^2\right]e^{-\lambda t}+ C_2\|g(z^*)\|_{{\rm HS}}^2, \ \forall t\geq 0,~x(0)\in\mathbb{R}^{nd},
\end{equation*}
for some $C_1$, $C_2$ and $\lambda$ depending  on $(k_0,\cdots,k_n,L,M)$ only.
\end{theorem}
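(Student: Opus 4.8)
The plan is to read Theorem \ref{Theorem 3.2} as the composition of the two preceding propositions, so that most of the work is already packaged. First I would take the hypotheses of the theorem --- that $k_0,\dots,k_n$ are positive and satisfy (\ref{solution}) --- and feed them into Proposition \ref{pro1}; its conclusion is exactly that these parameters are admissible for the two-tuple $(L,M)$ in the sense of Definition \ref{def 3.1}, i.e.\ there exists a positive definite $P$ realizing (\ref{P})--(\ref{P1}). With admissibility in hand I would invoke Proposition \ref{Theorem 3.1} under Assumptions \ref{A 1}--\ref{A 2}, whose conclusion is precisely the global mean-square stability together with the exponential tracking estimate (\ref{tracking performance}). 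Since the constants $C_1,C_2,\lambda$ produced by Proposition \ref{Theorem 3.1} depend only on $(k_0,\dots,k_n,L,M)$, the theorem follows verbatim. In this sense the theorem itself is a two-line chaining argument, and the genuine content lives in the two propositions.

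To make the proposal self-contained I would also sketch how I expect the underlying Proposition \ref{Theorem 3.1} to be proved, since that is where the Lyapunov mechanism enters. I would pass to error coordinates $Z=(z_0,z_1,\dots,z_n)$ with $z_0=\int_0^t e(s)\,\mathrm{d}s-u^*/k_0$ and $z_i=e^{(i-1)}$ (equivalently $z_1=y^*-x_1$ and $z_i=-x_i$ for $i\ge 2$), so that $u-u^*=\sum_{i=0}^n k_i z_i$ and the closed loop becomes $\mathrm{d}Z=(A\otimes I_d)Z\,\mathrm{d}t+\tilde r(Z)\,\mathrm{d}t+G(Z)\,\mathrm{d}B_t$, where $A$ is the companion matrix (\ref{A}) and both the nonlinear remainder $\tilde r$ and the diffusion $G$ are supported on the last $d$-block. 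Taking $V(Z)=Z^\top(P\otimes I_d)Z$ and applying It\^o's formula, the linear part yields $Z^\top\bigl((PA+A^\top P)\otimes I_d\bigr)Z$; the cross term collapses, thanks to the last-column identity (\ref{P}), to $2(u-u^*)^\top r(Z)$, and the diffusion trace term collapses, thanks to the $(n,n)$-entry $k_n$ determined by (\ref{P}), to $k_n\|g(x)\|_{{\rm HS}}^2$. Then Assumption \ref{A 2} furnishes the monotonicity that cancels $|u-u^*|^2$, Assumption \ref{A 1} bounds the residual cross term by $2\bigl(\sum_i k_iL\bigr)|Z|^2$, and (\ref{4.0}) bounds the diffusion by $k_nM^2|Z|^2$ plus a constant multiple of $\|g(z^*)\|_{{\rm HS}}^2$ --- precisely reassembling the quantity $\bar k$. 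Inequality (\ref{P1}) then gives a generator dissipation estimate $\mathcal L V\le -\lambda V+C\|g(z^*)\|_{{\rm HS}}^2$, and a Gronwall/comparison argument delivers (\ref{tracking performance}); the $|u^*|^2$ term enters through $z_0(0)=-u^*/k_0$, while $\sup_t\mathbf E|u(t)|^2<\infty$ follows from $u-u^*=\sum_i k_iz_i$.

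The hard part will be Proposition \ref{pro1}, i.e.\ certifying that (\ref{solution}) actually produces an admissible $P$. The difficulty is that the affine constraint (\ref{P}) pins the entire last column of $P$ to be $(k_0,\dots,k_n)^\top$, leaving only limited freedom to choose the remaining entries while simultaneously keeping $P$ positive definite and forcing $PA+A^\top P+2\bar kI_{n+1}<0$ for the companion matrix $A$. My plan would be to posit an explicitly structured $P$ (I would first try a tridiagonal or nearly diagonal ansatz) so that $PA+A^\top P$ acquires a sparse, almost-diagonal form whose negative definiteness can be checked entrywise by diagonal dominance or successive Schur complements. The appearance of $k_0^2$, $k_{i-1}^2-2k_{i-2}k_i$ and $k_n^2-k_{n-1}$ in (\ref{solution}) strongly suggests exactly these $2\times 2$/Schur-type sign conditions, so I would reverse-engineer the ansatz until the diagonal-dominance gaps are governed by these three families of quantities, thereby reducing the whole matrix inequality to requiring each of them to exceed $\bar k$.

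Finally, I would close the loop on the bookkeeping: verifying positive definiteness of the candidate $P$ itself (not merely of $PA+A^\top P+2\bar kI_{n+1}$) under the same constraints is, I expect, the most delicate step, and the unboundedness of the admissible set claimed in the Remark would then be confirmed by the explicit geometric choice $k_i=3^{-i}k_{i-1}$ already exhibited there.
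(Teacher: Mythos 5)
Your proof takes exactly the paper's route: the paper proves Theorem \ref{Theorem 3.2} precisely by chaining Proposition \ref{pro1} (positivity plus (\ref{solution}) implies admissibility in the sense of Definition \ref{def 3.1}) with Proposition \ref{Theorem 3.1} (admissibility plus Assumptions \ref{A 1}--\ref{A 2} gives global stability and the estimate (\ref{tracking performance}) with constants depending only on $(k_0,\cdots,k_n,L,M)$). Your supplementary sketches of the two propositions also track the paper's arguments --- the Lyapunov function $V(Y)=\frac{1}{2}Y^{\top}(P\otimes I_d)Y$ for Proposition \ref{Theorem 3.1}, and a construction of $P$ making $PA+A^{\top}P$ diagonal for Proposition \ref{pro1} --- so nothing further is needed.
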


\begin{remark}
By Theorem \ref{Theorem 3.2}, one can see that under some reasonable assumptions of $f$ and $g$, the extended PID control (\ref{extended pid controller}) can indeed globally stabilize the nonaffine stochastic uncertain system (\ref{n-order control system}).
Moreover, if the diffusion term $g$ vanishes at the point $z^*$, i.e., $g(z^*)=0$, then we know that $\mathbf{E}[|x(t)-z^*|^2]$ will converge to zero exponentially fast.
\end{remark}

\begin{remark}
It is not difficult to see that condition (\ref{343}) on $\frac{\partial f}{\partial u}$ can be weakened. To be precise, if (\ref{343}) is replaced by
\begin{align}\label{replace A2}
\frac{1}{2}\left[\frac{\partial f}{\partial u}+\Big(\frac{\partial f}{\partial u}\Big)^{\!\top}\right](x;u)\geq \underline{b} I_d,
  ~~(x;u)\in \mathbb{R}^{nd}\times \mathbb{R}^d,
\end{align}
where $\underline b>0$ is a constant, then the global stability of the closed-loop system (\ref{n-order control system})-(\ref{extended pid controller}) can still be guaranteed as long as the extended PID parameters are all positive and satisfy
\begin{align*}
\min\left\{k_0^2\underline b,~(k_{i-1}^2\!-\!2k_{i-2}k_{i})\underline b,~\!  2\le i\le n,~ k_n^2\underline b\!-\!k_{n-1}\right\}>\bar k,
\end{align*}
where $\bar k:=\sum_{i=0}^n k_iL+ k_n M^2$.
\end{remark}

\begin{remark}
Theorem 1 is also true for time-varying reference signals, provided that both the reference signal $y^*(t)$ and its time derivatives up to order $n-1$ are bounded functions over $[0,\infty)$. Besides, it can be shown that the steady-state tracking error has an upper bound proportional to the sum of the \emph{varying rate of the reference signal} and the \emph{noise intensity at the setpoint}, and  can be made arbitrarily small in at least two cases: i) the reference signal is slowly time-varying; ii) the PID controller gains are chosen sufficiently large, see \cite{zhao2023tracking}
for related discussions of a class of stochastic systems with relative degree two.
\end{remark}

In the final part of this section, we remark that if one only concerns about the stability of the closed-loop system, then the integral term $k_{0} \int_{0}^{t} e(s)\mathrm{d}s$ is \emph{not necessary} in general.  To be specific,
the following extended PD control
\begin{equation}\label{extended pd controller}
\begin{split}
&u(t)=k_{1} e(t)+k_{2} \dot{e}(t)+\cdots+k_{n} e^{(n-1)}(t),\\
&e(t)=y^*-y(t)\end{split}
\end{equation}
can also  stabilize the stochastic system (\ref{n-order control system}) globally.

\begin{theorem}\label{Theorem pd}
Consider the stochastic system (\ref{n-order control system}) controlled by extended PD control (\ref{extended pd controller}).
Suppose there exists a positive definite matrix $P_0\in\mathbb{R}^{n\times n}$, such that
 \begin{align}\label{PP}
 &P_0\ \!\![0,\cdots,0,1]^{\top}=~\![k_1,\cdots,k_n]^{\top},\\
 &P_0A_0+A_0^{\top}P_0 + 2\hat kI_{n}<0,\label{PP1}
\end{align}
 where $\hat k\in\mathbb{R}$ and  $A_0\in\mathbb{R}^{n\times n}$ are defined by
\begin{align*}
\hat k:=\sum_{i=1}^n k_iL+ k_n M^2,~A_0:=\begin{bmatrix}
0&1&\cdots&0\\
\vdots&\vdots&\ddots&\vdots\\
0&0&\cdots&1\\
-k_1&-k_2&\cdots&-k_n
\end{bmatrix},\end{align*}
 then the closed-loop system will be globally stable and satisfy
 \begin{equation*}
     \mathbf{E}\left[|x(t)\!-\!z^*|^2\right]\leq ~\!C_1\left[|x(0)-z^*|^2\right]e^{-\lambda t}+ C_2\left[\left|f(z^*;0)\right|^2+\left\|g(z^*)\right\|_{{\rm HS}}^2\right],~ \forall t\geq 0, \ x(0)\in\mathbb{R}^{nd},
 \end{equation*}
for some $C_1$, $C_2$ and $\lambda$ depending  on $(k_1,\cdots,k_n,L,M)$ only.
\end{theorem}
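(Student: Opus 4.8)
The plan is to mirror the Lyapunov argument behind Proposition~\ref{Theorem 3.1}, but now working in the reduced $nd$-dimensional error coordinate, since dropping the integral term removes exactly one state. First I would set $\tilde{x}:=x-z^*$, so that $\tilde{x}_1=x_1-y^*$ and $\tilde{x}_i=x_i$ for $i\ge 2$; under the extended PD law \eqref{extended pd controller} the input becomes the \emph{linear} feedback $u=-\sum_{j=1}^n k_j\tilde{x}_j$. Adding and subtracting $u$ in the last drift component $f(x;u)$ (which would otherwise complete a companion matrix), the closed-loop drift splits as $(A_0\otimes I_d)\tilde{x}+(\mathbf{e}_n\otimes I_d)\bigl(f(x;u)-u\bigr)$, where $\mathbf{e}_n:=[0,\cdots,0,1]^\top$ and $A_0\otimes I_d$ denotes the block matrix whose $(i,j)$ block is $(A_0)_{ij}I_d$. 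The diffusion enters only through the last block $g(x)$.

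Next I would take $V(\tilde x)=\tilde{x}^\top(P_0\otimes I_d)\tilde{x}$ and compute its generator by It\^o's formula. The purely linear part contributes $\tilde{x}^\top[(P_0A_0+A_0^\top P_0)\otimes I_d]\tilde{x}$, which by the LMI \eqref{PP1} is at most $-2\hat k|\tilde x|^2-2\delta|\tilde x|^2$ for some $\delta>0$. The cross term is the crux: the structural identity \eqref{PP} gives $(P_0\otimes I_d)(\mathbf e_n\otimes I_d)=(P_0\mathbf e_n)\otimes I_d=[k_1,\cdots,k_n]^\top\otimes I_d$, hence $\tilde x^\top(P_0\mathbf e_n\otimes I_d)=-u^\top$, and the cross term collapses to $-2u^\top(f(x;u)-u)=-2u^\top f(x;u)+2|u|^2$. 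The same identity yields $(P_0)_{nn}=k_n$, so the It\^o correction equals $k_n\|g(x)\|_{\mathrm{HS}}^2$.

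The remaining work is to absorb these defects into the $-2\hat k I_n$ margin of \eqref{PP1}. I would decompose $f(x;u)=[f(x;u)-f(z^*;u)]+[f(z^*;u)-f(z^*;0)]+f(z^*;0)$: Assumption~\ref{A 2} gives $u^\top[f(z^*;u)-f(z^*;0)]\ge|u|^2$, so by \eqref{3.0} one gets $-2u^\top f(x;u)+2|u|^2\le 2L|u|\,|\tilde x|+2|u|\,|f(z^*;0)|$; since $|u|\le\sum_j k_j|\tilde x_j|$ and $|\tilde x_j|\le|\tilde x|$, the first piece is at most $2L(\sum_j k_j)|\tilde x|^2$. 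For the diffusion I would use \eqref{4.0} in the form $\|g(x)\|_{\mathrm{HS}}^2\le 2M^2|\tilde x|^2+2\|g(z^*)\|_{\mathrm{HS}}^2$, producing a bad term $2k_nM^2|\tilde x|^2$. These two coefficients sum to exactly $2\hat k=2L\sum_j k_j+2k_nM^2$, cancelling the $-2\hat k|\tilde x|^2$ term and leaving $-2\delta|\tilde x|^2$. A final Young's inequality sends $2|u||f(z^*;0)|$ into $\delta|\tilde x|^2+C|f(z^*;0)|^2$ (using $|u|\le\kappa|\tilde x|$), so $\mathcal L V\le -\delta|\tilde x|^2+C\bigl(|f(z^*;0)|^2+\|g(z^*)\|_{\mathrm{HS}}^2\bigr)$. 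As $V$ is comparable to $|\tilde x|^2$, this reads $\mathcal L V\le-\lambda V+C(\cdots)$, and Dynkin's formula with a Gr\"onwall estimate yields the claimed exponential bound; $\sup_t\mathbf E[|x|^2+|u|^2]<\infty$ then follows from the linearity of $u$ in $\tilde x$.

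The main obstacle is twofold. Analytically, everything hinges on \eqref{PP}: it is precisely what converts the otherwise intractable coupling $2\tilde x^\top(P_0\otimes I_d)(\mathbf e_n\otimes I_d)f(x;u)$ into $-2u^\top f(x;u)$, where Assumption~\ref{A 2} becomes usable, and simultaneously fixes $(P_0)_{nn}=k_n$ so that the diffusion coefficient matches $\hat k$. Keeping the bookkeeping tight enough that the drift and diffusion defects sum to \emph{exactly} $2\hat k$—rather than leaving an unabsorbed positive multiple of $|\tilde x|^2$—is the delicate point; unlike the PID case there is no integral state to cancel $f(z^*;0)$, which is why this constant survives in the final estimate. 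Technically, I would also need to justify the stochastic-calculus steps rigorously: global existence and uniqueness of the closed-loop solution (immediate from the Lipschitz coefficients and the linear feedback), and the passage from the pointwise generator inequality to the expectation bound via localizing stopping times together with monotone/Fatou arguments.
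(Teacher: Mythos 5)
Your proposal is correct and follows essentially the same route as the paper's proof: the same error coordinates, the same Lyapunov function $V\propto\tilde{x}^{\top}(P_0\otimes I_d)\tilde{x}$, the same use of \eqref{PP} to turn the coupling term into $-u^{\top}f$ and to fix $(P_0)_{nn}=k_n$, the same exact absorption of the drift/diffusion defects into the $2\hat k$ margin of \eqref{PP1}, and the same Young's-inequality treatment of the surviving $|f(z^*;0)|$ term followed by a Khasminskii-type comparison lemma. The only (inessential) difference is the order of the splitting of $f$: you freeze $x=z^*$ and apply Assumption~\ref{A 2} along the segment from $0$ to $u$, while the paper writes $f(y+z^*;u)=f(y+z^*;0)+\theta(y;u)u$ with the gain bound applied at the current state --- both exploit the two assumptions in identical roles.
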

\begin{remark}Firstly, it can be seen from Theorem \ref{Theorem pd} that the ultimate bound of $\mathbf{E}[|x(t)-z^*|^2]$ depends not only on the value of $g(z^*)$, but also on the value of $f(z^*;0)$. However, such ultimate bound will not depend on $f(z^*;0)$, if the extended PID control (\ref{extended pid controller}) is applied (see Theorem \ref{Theorem 3.1}). This indicates that the integral term has the effect of eliminating the steady state offsets. Secondly, similar to Proposition \ref{pro1}, it can be shown that if  the $n-$parameters $k_1$, $\cdots,$ $k_n$ are positive and satisfy:
\begin{align*}
\min\left\{k_1^2,k_i^2-2k_{i-1}k_{i+1}, 2\le i\le n-1, k_n^2-k_{n-1}\right\}>\hat k,
\end{align*}
where $\hat k=\sum_{i=1}^n k_iL+ k_n M^2$, then $(\ref{PP})-(\ref{PP1})$ can be satisfied for some positive definite matrix $P_0.$
\end{remark}
\section{Parameter Design and Performance Analysis}
In Section 3, the global stability of the extended PID controlled stochastic system (\ref{n-order control system})-(\ref{extended pid controller}) is addressed.   However, some issues regarding the parameter design and tracking performance remain unclear for the case $n\geq 3$. Firstly, note that inequality (\ref{solution}) is a quadratic inequality involving $(n+1)$-variables $k_0$, $\cdots$, $k_n$, therefore it is not easy to find a set of solutions of (\ref{solution}) when the relative degree $n$ is large. Secondly, it is necessary and meaningful to obtain a more accurate upper bound on tracking error than (\ref{tracking performance}), because it may provide reliable design guidance for control practitioners. Thirdly, it is unclear whether the extended PID parameters can be selected to make the steady-state tracking error (i.e., the upper limit of the tracking error) arbitrarily small.

In this section, we will further address these fundamental problems. To this end, we first provide a design formula for the extended PID parameters. For any given $\lambda>0$, let $k_0$, $k_1$, $\cdots,$ $k_n$ be given by
\begin{equation}\label{parameters set of n-order system}
		k_0=k,~~~k_i=	\Big(\prod_{j=1}^i\beta_j\Big)k,~i=1,\cdots,n,	
\end{equation}
where $\beta_1,$ $\cdots,$ $\beta_n$ and $k$ satisfy
\begin{equation}\label{1616}
	\begin{split}
		&0<\beta_1<\left[n(\lambda+8M^2)\right]^{-1}\wedge 1 \\
        &0<\beta_i< \beta_{i-1}/n, \ \ 2\leq i \leq n  \\
        &k> \Big(\prod_{j=1}^n\beta_j\Big)^{-2} \big(1+3L+2L^2/(\lambda+8M^2)\big).
	\end{split}
\end{equation}


\begin{remark}\label{re5}
Under (\ref{parameters set of n-order system})-(\ref{1616}),	it can be proved that $k_0$, $k_1$, $\cdots,$ $k_n$ will satisfy \eqref{solution}. In fact, if we denote $$\widehat{\beta}_0:=1,~~ \widehat{\beta}_i:=\prod_{j=1}^i\beta_j, \ 0\leq i\leq n,$$ then it is not difficult to obtain
\begin{align*}
&\bar{k}=\sum_{i=0}^n\widehat{\beta}_ikL+\widehat{\beta}_nkM^2\leq \bigl(3L+\widehat{\beta}_nM^2\bigr)k,\\
&k_i^2-2k_{i-1}k_{i+1}=\widehat{\beta}_{i-1}\widehat{\beta}_i(\beta_i-2\beta_{i+1})k^2\\
&~~~~~~~~~~~~~~~~~~~~>(n-2)\widehat{\beta}_i^2k^2/n\\
&~~~~~~~~~~~~~~~~~~~~>\widehat{\beta}_n^2k^2,~1\leq i\leq n-1,\\
&k_n^2-k_{n-1}=\widehat{\beta}_n^2k^2-\widehat{\beta}_{n-1}k.
	\end{align*}
	Thus \eqref{solution} holds if
	\begin{equation}\label{0502-1}
		k>\bigl(\widehat{\beta}_{n-1}+3L+\widehat{\beta}_nM^2\bigr)\big/~\!\widehat{\beta}_n^2.
	\end{equation}
	Fortunately, \eqref{0502-1} holds since
	\begin{equation*}
		\widehat{\beta}_{n-1}+\widehat{\beta}_nM^2\leq \beta_2+\beta_1M^2<2/n<1.
	\end{equation*}
\end{remark}

Under Assumptions \ref{A 1} and \ref{A 2}, we present the main result in this section.
\begin{theorem}
	\label{Theorem uniform bounded with arbitrary small limit}
	Suppose that $(k_0,\cdots,k_n)$ are given by formulas (\ref{parameters set of n-order system})-(\ref{1616}), then the closed-loop system \eqref{n-order control system}-(\ref{extended pid controller})  will be globally stable
	and achieve the following tracking performance: for all $t\geq 0, ~x(0)\in\mathbb{R}^{nd}$,
	\begin{equation}\label{limit bound of x}
			\mathbf{E}\big[|x(t)-z^*|^2\big]\leq \frac{4n^3k_0^2}{k_n^2}\left[|x(0)-z^*|^2+|u^*|^2\right]e^{-\lambda t}
        +\frac{4n}{\lambda}\|g(z^*)\|_{{\rm HS}}^2.
	\end{equation}
 Moreover, if $\big\|\frac{\partial f}{\partial u}\big\|\leq R$ for some $R>0$, then for all $x(0)\in\mathbb{R}^{nd}$, we have
	\begin{equation}\label{limit lower bound of x}
		\liminf_{t\to \infty}\mathbf{E}\big[|x(t)-z^*|^2\big]\geq C_3\|g(z^*)\|_{{\rm HS}}^2,
	\end{equation}
where $C_3$ can be chosen as
\begin{equation*}
	C_3:=\lambda\Big[4(2+2L+M^2)\lambda+64(n+1)R^2\sum_{i=0}^nk_i^2\Big]^{-1}.
\end{equation*}
\end{theorem}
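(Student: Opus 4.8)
The plan is to establish the two assertions separately, but both rest on a single change of variables and the algebraic structure forced by (\ref{P}). First I would pass to the augmented error state $\xi=(\xi_0,\xi_1,\ldots,\xi_n)$ with $\xi_0=\int_0^t e(s)\,\mathrm ds-u^*/k_0$ and $\xi_i=e^{(i-1)}(t)$ for $1\le i\le n$, so that $\xi_1=y^*-x_1$, $\xi_i=-x_i$ for $i\ge 2$, hence $|x-z^*|^2=\sum_{i=1}^n|\xi_i|^2$, $u-u^*=\sum_{i=0}^n k_i\xi_i$, and $|\xi(0)|^2=|x(0)-z^*|^2+|u^*|^2/k_0^2$. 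In these coordinates the closed loop becomes $\mathrm d\xi=(A\xi+w)\,\mathrm dt+\sigma\,\mathrm dB_t$, where $A$ is the companion matrix (\ref{A}), the only nonzero block of $\sigma$ is the last one, equal to $-g(x)$, and the last block of $w$ is $-f(x;u)+\sum_i k_i\xi_i$. The decisive fact is that (\ref{P}) makes the last column of $P$ equal to $(k_0,\ldots,k_n)^\top$; therefore, for $V(\xi)=\xi^\top(P\otimes I_d)\xi$, the cross term in the drift of $\mathrm dV$ (i.e.\ in $\mathcal L V$, $\mathcal L$ being the generator) collapses to $2(u-u^*)^\top\!\big(-f(x;u)+\sum_i k_i\xi_i\big)$, and Assumption \ref{A 2} converts its control-gain part into a nonpositive quadratic, leaving only $-2(u-u^*)^\top[f(x;u)-f(z^*;u)]$, bounded via (\ref{3.0}); the It\^o correction contributes exactly $k_n\|g(x)\|_{{\rm HS}}^2$ because $P_{nn}=k_n$. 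This is the mechanism already behind Proposition \ref{Theorem 3.1}.

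For the upper bound (\ref{limit bound of x}) I would not reprove stability: Remark \ref{re5} certifies that (\ref{parameters set of n-order system})--(\ref{1616}) satisfy (\ref{solution}), hence are admissible by Proposition \ref{pro1}, so Proposition \ref{Theorem 3.1} applies. The real task is to pin down the constants, which I would do by exhibiting $P$ explicitly in ``arrow'' form, namely $\mathrm{diag}(p_0,\ldots,p_{n-1},k_n)$ with its last row and column overwritten by $(k_0,\ldots,k_n)$, and with $p_i\sim n\,k_i^2/k_n$ chosen so that the Schur complement condition $k_n>\sum_{i<n}k_i^2/p_i$ secures positive definiteness while $PA+A^\top P\le-\lambda P$ holds with the prescribed $\lambda$. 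The rapid decay $\beta_i<\beta_{i-1}/n$ is precisely what renders $PA+A^\top P$ diagonally dominant with this margin. Estimating $\lambda_{\max}(P)/\lambda_{\min}(P)\lesssim n^3k_0^2/k_n^2$ and $k_n/\lambda_{\min}(P)\lesssim 2n$, then applying It\^o to $V$, taking expectations, invoking Gr\"onwall together with $\|g(x)\|_{{\rm HS}}^2\le 2\|g(z^*)\|_{{\rm HS}}^2+2M^2|x-z^*|^2$ and the identity for $|\xi(0)|^2$ above, yields (\ref{limit bound of x}) with the stated constants $4n^3k_0^2/k_n^2$ and $4n/\lambda$.

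For the lower bound (\ref{limit lower bound of x}) I would switch to an energy balance on the driven coordinate. It\^o applied to $|x_n|^2$ gives $\frac{\mathrm d}{\mathrm dt}\mathbf{E}|x_n|^2=\mathbf{E}\|g(x)\|_{{\rm HS}}^2+2\mathbf{E}[x_n^\top f(x;u)]$; since $\mathbf{E}|x_n|^2\le\mathbf{E}|x-z^*|^2$ is bounded by the first part, time-averaging over $[0,T]$ sends the boundary term $T^{-1}(\mathbf{E}|x_n(T)|^2-\mathbf{E}|x_n(0)|^2)$ to zero, equating the averaged injected energy with the averaged drift correlation. I would then bound the noise from below by $\|g(x)\|_{{\rm HS}}^2\ge\tfrac12\|g(z^*)\|_{{\rm HS}}^2-M^2|x-z^*|^2$ via (\ref{4.0}), and the correlation from above by $2|x_n^\top f|\le(1+2L)|x-z^*|^2+R^2|u-u^*|^2$, using (\ref{3.0}), $\|\partial f/\partial u\|\le R$, and the mean value theorem for $f(x;u)-f(z^*;u^*)$. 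Writing $|u-u^*|^2=\big|\sum_i k_i\xi_i\big|^2\le(n+1)\big(\sum_i k_i^2\big)|\xi|^2$ and feeding in the control of $\mathbf{E}|\xi|^2$ from the first part, the averaged balance should combine into $\|g(z^*)\|_{{\rm HS}}^2\le\big[4(2+2L+M^2)+64(n+1)R^2\textstyle\sum_i k_i^2/\lambda\big]\liminf_{t}\mathbf{E}|x-z^*|^2$, which is exactly (\ref{limit lower bound of x}).

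The hard part will be this final combination, for two reasons. First, $|u-u^*|^2$ genuinely involves the integral coordinate $\xi_0$, which is \emph{not} itself bounded by the tracking error $|x-z^*|^2$; closing the estimate forces one back into the Lyapunov dissipation $\frac{\mathrm d}{\mathrm dt}\mathbf{E}V\le-\lambda\mathbf{E}V+k_n\mathbf{E}\|g(x)\|_{{\rm HS}}^2$ to control $\mathbf{E}|\xi|^2$, and the bookkeeping must be arranged so that the $\|g(z^*)\|_{{\rm HS}}^2$ re-generated there is reabsorbed on the left rather than swamping the noise-injection term --- this is where the precise constant $C_3$ is won or lost. Second, the balance only constrains the Ces\`aro average of $\mathbf{E}|x-z^*|^2$, whereas (\ref{limit lower bound of x}) is a statement about the pointwise $\liminf$; bridging the gap requires either a uniform-continuity estimate on $t\mapsto\mathbf{E}|x-z^*|^2$ so that average and $\liminf$ agree up to a controllable factor, or a direct differential argument showing the error cannot dwell below $C_3\|g(z^*)\|_{{\rm HS}}^2$. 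Treating the non-vanishing diffusion at $z^*$ cleanly at this step --- the very feature distinguishing this work from \cite{cong2017,zhang2022pid} --- is what makes the lower bound the genuinely new ingredient.
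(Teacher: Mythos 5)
Your proposal contains two genuine gaps, one in each half. For the upper bound, the ``arrow''-form matrix $P$ you propose (diagonal $(p_0,\dots,p_{n-1},k_n)$ with last row and column $(k_0,\dots,k_n)$) \emph{cannot} satisfy $PA+A^{\top}P<0$, no matter how the free diagonal entries $p_i$ are chosen. Indeed, for such a $P$ the $(0,n)$ entry of $PA+A^{\top}P$ is forced to equal $-2k_0k_n$, while the $(0,0)$ and $(n,n)$ diagonal entries are forced to equal $-2k_0^2$ and $-2(k_n^2-k_{n-1})$; hence the principal $2\times 2$ minor of $-(PA+A^{\top}P)$ on rows/columns $\{0,n\}$ equals
\begin{equation*}
4k_0^2\bigl(k_n^2-k_{n-1}\bigr)-4k_0^2k_n^2=-4k_0^2k_{n-1}<0,
\end{equation*}
so $-(PA+A^{\top}P)$ is never positive definite. (The admissible $P$ of Proposition \ref{pro1} is a \emph{full} matrix built recursively so that $PA+A^{\top}P$ is exactly diagonal; it is not of arrow type.) Consequently your route to the constants $4n^3k_0^2/k_n^2$ and $4n/\lambda$ collapses. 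The paper avoids quadratic-form Lyapunov functions here altogether: it applies the linear change of coordinates \eqref{transform from y to z}, $z_0=y_0$, $z_i=z_{i-1}+\widehat{\beta}_iy_i$, under which the controller becomes simply $u=-kz_n+u^*$, the drift satisfies $z^{\top}b(z)\leq-\tfrac{1}{2}(\lambda+8M^2)|z|^2$ thanks to \eqref{1616}, and the plain function $V(z)=|z|^2$ together with Lemma \ref{lemma upper bound} gives the decay rate $\lambda$ directly; the stated constants then come from converting between the $z$- and $y$-coordinates.

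For the lower bound, your energy balance on $|x_n|^2$ with Ces\`aro averaging can only control the time-average (hence the $\limsup$), not the $\liminf$ demanded by \eqref{limit lower bound of x}; you flag this yourself but do not close it, and the absorption of the integral coordinate $\xi_0$ is likewise left open --- with your bookkeeping the term $R^2\,\overline{\mathbf{E}|u-u^*|^2}$ carries a coefficient of order $R^2(n+1)\sum_ik_i^2\cdot\tfrac{4}{\lambda}\|g(z^*)\|_{\rm HS}^2$, which can swamp $\tfrac12\|g(z^*)\|_{\rm HS}^2$ rather than be reabsorbed. The paper's resolution is the ``direct differential argument'' you mention only as a possibility: freeze the actual control $u(t)$ as an adapted process inside an auxiliary SDE whose solution coincides with $y(t)$, split the cross term by Young's inequality with the deliberately lopsided weights
\begin{equation*}
2\tilde{y}_n^{\top}\theta\,(u-u^*)\geq-\frac{16(n+1)R^2|\mathbf{k}|^2}{\lambda}|\tilde{y}_n|^2-\frac{\lambda}{16(n+1)}|Y(t)|^2,
\end{equation*}
so that the $|Y(t)|^2$ coefficient is small enough for the already-proved upper bound (which also covers $y_0$) to convert it into $\tfrac14\|g(z^*)\|_{\rm HS}^2$ plus an exponentially decaying term, while the large factor is dumped into the dissipation constant $w$ appearing in $C_3=\tfrac{1}{4w}$; then the pointwise comparison Lemma \ref{lemma lower bound} yields a lower bound on $\mathbf{E}|y(t)|^2$ at every $t$, from which the $\liminf$ statement follows. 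Without these two ingredients --- the weighted split and the pointwise (not averaged) comparison lemma --- your argument does not reach \eqref{limit lower bound of x}.
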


\begin{remark}Firstly, we point out that the proof of Theorem 3 hinges on the invertible coordinate transformation given in (\ref{transform from y to z}), which makes the transformed system (under the new coordinates $(z_1,z_2,\cdots,z_n)$)  a weakly dissipative system. Secondly, it is worth noting that both the upper bound and lower bound of the tracking error are established in Theorem \ref{Theorem uniform bounded with arbitrary small limit}, from which one can see that the tracking error can be made arbitrarily small by choosing the parameter $\lambda$ suitably large. Moreover, one can see that:

 \noindent (i) If $g(\cdot)$ vanishes at $z^*=(y^*,0,\cdots,0)$, then  $\mathbf{E}[|x(t)-z^*|^2]$ will converge to zero exponentially;

 \noindent (ii) If  $g(z^*)\neq 0$, $\mathbf{E}[|x(t)-z^*|^2]$ will not converge to zero.
\end{remark}

\begin{remark}
Denote \begin{align*}
\eta(t):=\mathbf{E}\big[|x(t)-z^*|^2\big]
=\mathbf{E}\big[ |e(t)|^2+ |\dot{e}(t)|^2+\cdots+ |e^{(n-1)}(t)|^2\big],
\end{align*}
then $\eta(t)$ can be regarded as a quantity which reflects the tracking performance. Combine (\ref{limit bound of x}) with (\ref{limit lower bound of x}), one can see
\begin{align*}
	C_3\|g(z^*)\|_{{\rm HS}}^2\le 	\liminf_{t\to \infty} \eta(t)\le \limsup_{t\to \infty}\eta(t)\le C_2\|g(z^*)\|_{{\rm HS}}^2,
	\end{align*}
where $C_2$ and $C_3$ are positive constants independent of the initial state and the setpoint $y^*$.
Therefore, the norm of $g(z^*)$, which reflects the intensity of the random disturbances at $z^*$, plays a crucial role in the tracking performance.
\end{remark}

\begin{remark}
	It is worth mentioning that by replacing Assumption \ref{A 2} with \eqref{replace A2}, where $\underline b>0$ is a constant, the closed-loop system \eqref{n-order control system}-(\ref{extended pid controller}) can still achieve global stability and the tracking performance \eqref{limit bound of x} as long as the last inequality in \eqref{1616} is replaced by
	\begin{equation}\label{0514}
		k> \frac{1}{\underline b}\Big(\prod_{j=1}^n\beta_j\Big)^{-2} \big(1+3L+2L^2/(\lambda+8M^2)\big).
	\end{equation}
\end{remark}
\vskip 0.1cm
To establish the lower bound (\ref{limit lower bound of x}), it is worth noting that boundedness of $\frac{\partial f}{\partial u}$ in Theorem \ref{Theorem uniform bounded with arbitrary small limit} can be weakened to the polynomial growth case.

\begin{proposition}\label{prop of lower bound}
	Suppose Assumptions \ref{A 1}-\ref{A 2} hold and the control gain matrix $\frac{\partial f}{\partial u}$ satisfies the following growth condition:
	\begin{equation}\label{polynomial growth condition}
		\Big\|\frac{\partial f}{\partial u}(x;u)\Big\|\leq R\big(1+|x|^{\gamma}+|u|^{\gamma}\big),\ (x,u)\in \mathbb{R}^{nd}\times \mathbb{R}^d,
	\end{equation}
    where $R$ and $\gamma$ are positive constants.
	If $\|g(z^*)\|_{{\rm HS}}\neq 0$ and  $(k_0,k_1,\cdots,k_n)$ are given by formulas (\ref{parameters set of n-order system})-(\ref{1616}) with $\lambda>16\gamma M^2$,
	there exists a constant $C_4$ depending only on $(k_0,\cdots,k_n,L,M,R,\gamma,n)$ such that
	\begin{equation*}
		\begin{split}
			\liminf_{t\to \infty}\mathbf{E}\big[|x(t)-z^*|^2\big]>\frac{C_4\|g(z^*)\|_{{\rm HS}}^2}{1+|y^*|^{2\gamma}+|u^*|^{2\gamma}+\|g(z^*)\|_{{\rm HS}}^{2\gamma}}.
		\end{split}
	\end{equation*}
\end{proposition}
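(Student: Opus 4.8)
The plan is to obtain Proposition~\ref{prop of lower bound} as a quantitative sharpening of the lower-bound argument behind \eqref{limit lower bound of x}, replacing the uniform bound $\|\partial f/\partial u\|\le R$ by the growth estimate \eqref{polynomial growth condition} at the price of controlling higher moments of $u-u^*$. Set $\phi(t):=\mathbf E|x(t)-z^*|^2$. By the global stability part of Theorem~\ref{Theorem uniform bounded with arbitrary small limit}, $\phi$ is bounded, and under Assumptions~\ref{A 1}--\ref{A 2} together with \eqref{polynomial growth condition} it is $C^1$ with derivative given by the generator. Writing $\ell:=\liminf_{t\to\infty}\phi(t)$, a window-minimum argument for bounded $C^1$ functions produces a sequence $t_j\to\infty$ along which $\phi(t_j)\to\ell$ and $\phi'(t_j)\to0$; evaluating the whole argument along this sequence is what upgrades the conclusion from a $\limsup$ to the required $\liminf$ bound.

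First I would write out the balance. Applying It\^o's formula to $|x-z^*|^2$ and using that only the last equation of \eqref{n-order control system} carries the Brownian term gives, with $G:=\|g(z^*)\|_{\rm HS}$,
\begin{equation*}
\phi'(t)=2\sum_{i=1}^{n-1}\mathbf E\big[(x_i-z^*_i)^\top x_{i+1}\big]+2\,\mathbf E\big[x_n^\top f(x;u)\big]+\mathbf E\|g(x)\|_{\rm HS}^2 .
\end{equation*}
Bounding the telescoping cross terms by $\phi$ and, via \eqref{4.0}, using $\mathbf E\|g(x)\|_{\rm HS}^2\ge\tfrac12 G^2-M^2\phi$, the condition $\phi'(t_j)\to0$ yields in the limit
\begin{equation*}
\tfrac12 G^2\le (M^2+2)\,\ell+2\,\limsup_{j\to\infty}\mathbf E\big[|x_n|\,|f(x;u)|\big]\big|_{t_j}.
\end{equation*}
Thus the leading term $\tfrac12 G^2$ is secured, and everything reduces to an upper estimate of $\mathbf E[|x_n|\,|f|]$ that is small when $\ell$ is small.

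For that estimate I would use $f(z^*;u^*)=0$ from \eqref{unique u^* w.r.t y^*} and split
\begin{equation*}
|f(x;u)|\le L|x-z^*|+R\big(1+|y^*|^{\gamma}+(|u^*|+|u-u^*|)^{\gamma}\big)\,|u-u^*|,
\end{equation*}
where the second term follows from the mean value theorem along the segment joining $u^*$ to $u$ together with \eqref{polynomial growth condition} evaluated at $z^*$. After $|x_n|\le|x-z^*|$, Cauchy--Schwarz and $(|u^*|+|u-u^*|)^{\gamma}\lesssim|u^*|^{\gamma}+|u-u^*|^{\gamma}$, the quantity $\mathbf E[|x_n|\,|f|]$ is dominated by
\begin{equation*}
L\,\phi+R\big(1+|y^*|^{\gamma}+|u^*|^{\gamma}\big)\sqrt{\phi}\,\sqrt{\mathbf E|u-u^*|^{2}}+R\,\sqrt{\phi}\,\sqrt{\mathbf E|u-u^*|^{2(1+\gamma)}} .
\end{equation*}
The decisive new ingredient is a priori control of these two moments of $u-u^*$. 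Since under the extended PID law $u-u^*$ is an affine function of the integral-augmented deviation, it suffices to bound the corresponding moments of the state in the weakly dissipative coordinates underlying Theorem~\ref{Theorem uniform bounded with arbitrary small limit}: the second moment is \eqref{limit bound of x}, and for the $2(1+\gamma)$-moment I would run the same Lyapunov function raised to the power $1+\gamma$. Its It\^o correction contributes a term of order $(2(1+\gamma)-1)M^2$ relative to the dissipation rate $\lambda$, and this is exactly why the hypothesis $\lambda>16\gamma M^2$ is imposed: it keeps the drift of $\mathbf E[V^{1+\gamma}]$ strictly negative and yields $\limsup_{t}\mathbf E|u-u^*|^{2}\lesssim G^{2}$ and $\limsup_{t}\mathbf E|u-u^*|^{2(1+\gamma)}\lesssim G^{2(1+\gamma)}$, with constants depending only on $(k_0,\dots,k_n,L,M,R,\gamma,n)$.

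Substituting these scalings and setting $s:=\sqrt{\ell}$ collapses the balance to a scalar quadratic inequality of the shape
\begin{equation*}
\tfrac12 G^2\le (M^2+2+2L)\,s^2+R\Big[c_1\big(1+|y^*|^{\gamma}+|u^*|^{\gamma}\big)G+c_2\,G^{1+\gamma}\Big]s .
\end{equation*}
Solving for $s$ and applying $(a+b+c+d)^2\le4(a^2+b^2+c^2+d^2)$ to the bracket turns the positive root into $\ell=s^2\gtrsim G^2/\big(1+|y^*|^{2\gamma}+|u^*|^{2\gamma}+G^{2\gamma}\big)$, which is the claimed bound with an explicit $C_4$. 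I expect the main obstacle to be the $2(1+\gamma)$-moment estimate and its correct $G$-scaling: one must track how the single-channel diffusion $g$ spreads into all coordinates under the dissipative transformation, verify that the margin $\lambda>16\gamma M^2$ closes the higher-power Lyapunov inequality, and absorb the non-tracking (integral) component of the augmented state with the right power of $G$. Once these moment bounds are in hand, the concluding scalar optimization is routine.
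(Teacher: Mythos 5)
Your strategy is sound, and its second half takes a genuinely different route from the paper's. The first half coincides: like the paper, you obtain the $2(1+\gamma)$-th moment bound by running the Lyapunov function $|z|^{2p}$, $p=1+\gamma$, in the dissipative coordinates \eqref{transform from y to z}; the paper's computation gives $\mathcal{L}V_p\le -\tfrac p2\bigl(\lambda-16(p-1)M^2\bigr)V_p+C\|g(z^*)\|_{{\rm HS}}^{2p}$, which is exactly the role you assign to the margin $\lambda>16\gamma M^2$, and since $u-u^*=-kz_n$ in these coordinates, your two moment bounds on $u-u^*$ follow with constants depending only on the admissible data. The divergence is in how the lower bound is extracted. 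The paper never differentiates $\phi(t)=\mathbf E|x(t)-z^*|^2$: it introduces an auxiliary SDE whose drift is completely frozen along the true solution (last entry $f(y(t)+z^*;u(t))$), whose solution coincides with $y(t)$ by uniqueness; it bounds the generator of $|\widehat y|^2$ from below, calibrating Young weights $\tfrac{1}{8C_2'}$ and $\tfrac{1}{8C_6\|g(z^*)\|_{{\rm HS}}^{2\gamma}}$ so that, after taking expectations, the $|Y(t)|^2$ and $|Y(t)|^{2(1+\gamma)}$ penalties consume at most $\tfrac38\|g(z^*)\|_{{\rm HS}}^2$ of the noise floor $\tfrac12\|g(z^*)\|_{{\rm HS}}^2$; it then applies the comparison Lemma~\ref{lemma lower bound} and lets $t\to\infty$. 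You instead work with the scalar function $\phi$ directly, selecting times $t_j\to\infty$ with $\phi(t_j)\to\ell:=\liminf_{t\to\infty}\phi(t)$ and $\phi'(t_j)\to0$, and closing a quadratic inequality in $\sqrt\ell$. Your route is more elementary (no auxiliary process, no weight calibration), but it carries two obligations the paper's route avoids: you must justify that $\phi$ is $C^1$ with $\phi'(t)=\mathbf E\bigl[\mathcal{L}|x(t)-z^*|^2\bigr]$ --- which needs the $2(1+\gamma)$-moment bound for uniform integrability, since $|x_n^{\top}f|$ grows like $|Y|^{2+\gamma}$ under the feedback --- and you must actually prove your window-minimum selection lemma, which is true but not routine because minima over windows can sit at endpoints; Ekeland's variational principle on $[T,\infty)$ settles it.

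One slip needs repair. In your splitting of $|f(x;u)|$ you invoke \eqref{polynomial growth condition} ``evaluated at $z^*$,'' but the mean-value (integral) representation evaluates $\partial f/\partial u$ at the \emph{actual} state $x$ along the segment in $u$, so the correct bound is $R\bigl(1+|x|^{\gamma}+(|u^*|+|u-u^*|)^{\gamma}\bigr)|u-u^*|$; using $|x|^{\gamma}\le c\bigl(|y^*|^{\gamma}+|x-z^*|^{\gamma}\bigr)$ this produces an additional term $R\,\mathbf E\bigl[|x_n|\,|x-z^*|^{\gamma}|u-u^*|\bigr]$ that your two moment bounds on $u-u^*$ alone do not cover. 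It is harmless, however: by Cauchy--Schwarz it is at most $R\,|\mathbf k|\,\sqrt{\phi}\,\bigl(\mathbf E|Y|^{2(1+\gamma)}\bigr)^{1/2}$, which in the limit scales like $\sqrt{\ell}\,\|g(z^*)\|_{{\rm HS}}^{1+\gamma}$ --- exactly the scaling of your third term --- so the final quadratic inequality and the form of $C_4$ are unchanged. With that correction your argument goes through and yields the stated bound.
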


\section{Proof of The Main Results}\label{sec V}
\begin{proof}[\textnormal{Proof of Proposition \ref{Theorem 3.1}}.]
First we denote
\begin{equation}\label{5.1}
	\begin{split}
		y_0(t)&=\int_{0}^{t}(x_1(s)-y^*)\mathrm{d}s+u^*/k_0, \\
        y_1(t)&=x_1(t)-y^*,~~
        y_i(t)=x_i(t), \ 2\!\leq i\!\leq n,~\\
        y(t)&=(y_1^{\top}(t),\cdots,y_n^{\top}(t))^{\top},
	\end{split}
\end{equation}
where $u^*$ is defined in (\ref{unique u^* w.r.t y^*}). Then system \eqref{n-order control system}-\eqref{extended pid controller} can be rewritten as (we omit the time variable $t$ for simplicity):
\begin{equation}\label{n-order system for y}
	\begin{cases}
		\mathrm{d}y_i\: =y_{i+1}\mathrm{d}t, \ \ \ \ 0\leq i\leq n-1,\\
		\mathrm{d}y_n=f(y+z^*; u)\mathrm{d}t+g(y+z^*)\mathrm{d}B_t,\\
		\ u \ \;\, =-\sum_{i=0}^nk_iy_i+u^*.
	\end{cases}
\end{equation}
Note that $$f(y+z^*; u)-f(y+z^*;u^*)=\int_0^1 \frac{\mathrm{d}}{\mathrm{d}t} f(y+z^*;u^*+t(u-u^*))\mathrm{d}t,$$
it can be deduced that
\begin{align}\label{24}
f(y+z^*; u)=f(y+z^*;u^*)+\theta(y;u)(u-u^*),
\end{align}
where $\theta(y;u)$ is a $d\times d$ matrix defined by
\begin{equation}\label{derivative matrix theta}
	\theta(y;u)=\int_0^1 \frac{\partial f}{\partial \bar u}(y+z^*;\bar u)\mathrm{d}t,~\bar u=u^*+t(u-u^*).
\end{equation}
By Assumption \ref{A 2}, we have
\begin{align}\label{17.1}
\frac{1}{2}\left(\theta(y;u)+\theta^\top(y;u)\right)\geq I_d,~~~y,u\in\mathbb{R}^d.
\end{align}
Note that $u(t)=-\sum_{i=0}^nk_iy_i(t)+u^*$, it follows that
\[
f(y(t)+z^*; u(t))=f(y(t)+z^*;u^*)-\theta(y(t);u(t))\sum_{i=0}^nk_iy_i(t).
\]

Now, suppose that $k_0$, $k_1$, $\cdots,$ $k_n$ are admissible for the two-tuples $(L,M)$. By Definition \ref{def 3.1}, there is a positive definite  matrix $P=(p_{ij})_{0\le i,j\le n}\in\mathbb{R}^{(n+1)\times (n+1)}$, such that (\ref{P})-(\ref{P1}) are satisfied.
Consider the following Lyapunov function:
$$V(Y)=\frac{1}{2}Y^{\mathsf{T}}\bar PY=\frac{1}{2}\sum_{i=0}^n\sum_{j=0}^n p_{ij}y_i^{\top}y_j, ~\bar P=P\otimes I_d,$$
where $Y=\left[y_0^{\mathsf{T}},\cdots, y_n^{\mathsf{T}}\right]^{\mathsf{T}}$, and $\otimes$ denotes the Kronecker product. Set
\begin{align*}
b(Y)=\begin{bmatrix}y_1\\\vdots\\ y_n\\
f(y+z^*; u)\end{bmatrix},~~
\sigma(Y)=\begin{bmatrix}\mathbf{0}_{d\times m}\\\vdots\\ \mathbf{0}_{d\times m}\\
g(y+z^*)\end{bmatrix},
\end{align*}
where $\mathbf{0}_{d\times m}$ denotes the $d\times m$ zero matrix, then the differential
	operator $\mathcal{L}$ acting on $V$ is given by (see Definition \ref{de1} in Appendix)
\begin{equation}\label{3.77}
	\begin{split}
		\mathcal{L}V(Y)&=\frac{\partial V}{\partial Y} b(Y)+\frac{1}{2}{\rm tr}\left\{\sigma^{\top}(Y) \frac{\partial^2V}{\partial Y}\sigma(Y)\right\}\\
        &=\sum_{i=0}^{n-1}\frac{\partial V}{\partial y_i}y_{i+1}+\frac{\partial V}{\partial y_{n}}f(y+z^*; u)+\frac{1}{2}{\rm tr}\left\{\sigma^{\top}(Y)\bar P \sigma(Y)\right\}.
	\end{split}
\end{equation}
From (\ref{P}), we know $p_{in}=k_i$ for $i=0,\cdots,n$, and thus $\frac{\partial V}{\partial y_{n}}=\sum_{i=0}^nk_iy_i^\top$.  Recall $u=-\sum_{i=0}^nk_iy_i+u^*$, we have
\begin{equation}\label{25}
	\begin{split}
		\frac{\partial V}{\partial y_{n}}f(y+z^*; u)&=\frac{\partial V}{\partial y_{n}}f\Big(y+z^*; -\sum_{i=0}^nk_iy_i+u^*\Big)\\
        &=\sum_{i=0}^nk_iy_i^\top\Big[f(y+z^*; u^*)-\theta(y;u)\sum_{i=0}^nk_iy_i\Big]\\
        &\leq  \sum_{i=0}^nk_iy_i^\top f(y+z^*; u^*)- \Big(\sum_{i=0}^nk_iy_i^\top\Big)\sum_{i=0}^nk_iy_i\\
        &\leq \sum_{i=0}^nk_iL|Y|^2-\frac{\partial V}{\partial y_{n}}\sum_{i=0}^nk_iy_i,
	\end{split}
\end{equation}
where the first inequality follows from the equality
\begin{align*}
\Big(\sum_{i=0}^nk_iy_i^\top\Big) \theta(y;u)\sum_{i=0}^nk_iy_i&=\Big(\sum_{i=0}^nk_iy_i^\top\Big) \theta^\top(y;u)\sum_{i=0}^nk_iy_i\\
&=\frac{1}{2}\Big(\sum_{i=0}^nk_iy_i^\top\Big) (\theta+\theta^\top)(y;u)\sum_{i=0}^nk_iy_i
\end{align*}
and property (\ref{17.1}), and the second inequality follows from Lipschitz property of $f$ (see Assumption 1) and the fact $f(z^*;u^*)=0$.

Besides, the upper bound of $g(y+z^*)$ can be estimated as follows:
\begin{align*}
\|g(y+z^*)\|_{{\rm HS}}^2\le \|g(y+z^*)-g(z^*)+g(z^*)\|_{{\rm HS}}^2
\le 2M^2|y|^2+2\|g(z^*)\|_{{\rm HS}}^2,
\end{align*}
thus
\begin{equation}\label{26}
	\begin{split}
		\frac{1}{2}{\rm tr}\Big\{\sigma(Y)^{\top}\bar P \sigma(Y)\Big\}
&=\frac{1}{2}k_n{\rm tr}\left[g^{\top}g(y+z^*)\right]
        =\frac{1}{2}k_n\|g(y+z^*)\|_{{\rm HS}}^2\\
        &\le k_nM^2|y|^2+k_n\|g(z^*)\|_{{\rm HS}}^2.
	\end{split}
\end{equation}
From (\ref{3.77})-(\ref{26}) and recall $\bar k=\sum_{i=0}^n k_iL+ k_n M^2$, we conclude that
\begin{align*}
\mathcal{L}V&\le \sum_{i=0}^{n-1}\frac{\partial V}{\partial y_i}y_{i+1}-
\frac{\partial V}{\partial y_{n}}\sum_{i=0}^nk_iy_i+\bar k|Y|^2+k_n\|g(z^*)\|_{{\rm HS}}^2\\
&= \frac{1}{2}Y^{\top} \left[\left(PA+A^{\top}P\right)\otimes I_d\right] Y+\bar k|Y|^2+k_n\|g(z^*)\|_{{\rm HS}}^2.
\end{align*}
From (\ref{P1}) and the positive definiteness of $P$, it can be seen that
$$\mathcal{L}V(Y)\le -\eta V(Y)+k_n\|g(z^*)\|_{{\rm HS}}^2$$
for some $\eta>0.$ Hence, it follows from Lemma \ref{lemma upper bound} that
\begin{equation*}
		\mathbf{E}V(Y(t))\leq V(Y(0))e^{-\eta t}+ k_n\|g(z^*)\|_{{\rm HS}}^2/\eta,~ \forall t\geq 0,
\end{equation*}
Hence, $\mathbf{E}V(Y(t))$ is a bounded function of $t\in[0,\infty)$.
Note that 
\[
|x(t)-z^*|\le |Y(t)|, \ u(t)=-\sum_{i=0}^nk_iy_i(t)+u^*, \ \text{ and } \ \lambda_{\min}(P) |Y|^2\le V(Y)\le \lambda_{\max}(P)|Y|^2,
\]
we know that the closed-loop system is globally stable in the sense that
$$\sup_{t\geq 0}\mathbf{E}\bigl[|x(t)|^2+|u(t)|^2\bigr]<\infty.$$
In addition, by applying the fact $|Y(0)|^2=|x(0)-z^*|^2+|u^*|^2/k_0^2,$
we conclude that (\ref{tracking performance}) holds for some positive constants $C_1$, $C_2$ and $\lambda$ depending  on $(k_0,\cdots,k_n,L,M)$ only.
\end{proof}

\begin{proof}[\textnormal{Proof of Proposition \ref{pro1}}.]
First, we point out that the construction of the matrix $P$ plays a key role in the proof. In fact, the matrix $P$ is constructed so that (\ref{P}) is satisfied and $PA+A^{\top}P$ is a \emph{diagonal matrix}. This requirement will uniquely determine the elements in the matrix $P$.

Now, we prove Proposition \ref{pro1} by considering four cases, $n=1$,  $n=2$,  $n=3$ and $n>3$. It should be noted that the first two situations $n\le 2$ have been considered in \cite{zhao2020}, and  we directly borrow the construction of matrix the construction of the matrix $P$ .

Case 1: When $n=1$, the extended PID control (\ref{extended pid controller}) reduces to the following PI control:
\begin{equation*}\begin{split}
u(t)=k_{1} e(t)+k_{0} \int_{0}^{t}e(s)\mathrm{d}s, ~~e(t)=y^*-y(t).
\end{split}
\end{equation*}
Besides, matrix $A$ defined in (\ref{A}) has the form $$A=\begin{bmatrix}0&1\\-k_0&-k_1\end{bmatrix}.$$
By Definition \ref{def 3.1}, it suffices to construct a $2\times 2$ matrix $P>0$ such that $$P[0,1]^\top=[k_0,k_1]^\top,~~PA+A^{\top}P + 2\bar kI_{2}<0$$ where $I_2$ is the $2\times 2$ unit matrix.

To this end, let us choose
\begin{align}\label{p0}
P=\begin{bmatrix}2k_0k_1&k_0\\
k_0&k_1\end{bmatrix},
\end{align}
then it is easy to see that
$$PA+A^\top P=
\begin{bmatrix}
-2k_0^2&0\\0&-2k_1^2
\end{bmatrix}.$$
By condition (\ref{solution}), we know that $\min\left\{k_0^2,~ k_1^2-k_{0}\right\}>\bar k>0$. The positive definiteness of the matrix $P$ defined by (\ref{p0}) can be easily verified, since $k_0$, $k_1$ are positive and $k_1^2-k_0>0$. Moreover, from $\min\left\{k_0^2,~ k_1^2-k_{0}\right\}>\bar k$, it can be deduced that $PA+A^{\top}P + 2\bar kI_{2}<0$.

Case 2: When $n=2$, the extended PID control (\ref{extended pid controller}) reduces to the classical PID control:
\begin{equation*}\begin{split}
u(t)\!=k_{1} e(t)\!+\!k_{0} \int_{0}^{t}\!\! e(s)\mathrm{d}s+ k_2\dot e(t), ~~e(t)\!=y^*-y(t).
\end{split}
\end{equation*}
Besides, matrix $A$ defined in (\ref{A}) turns into
 $$A=\begin{bmatrix}0&1&0\\0&0&1\\-k_0&-k_1&-k_2\end{bmatrix}.$$
We mention that if a $3\times 3$ symmetric matrix $P$ satisfies $P[0,0,1]^\top =[k_0,k_1,k_2]^\top$ and makes $PA+A^{\top}P$ is a \emph{diagonal matrix}, then $P$ must have the following form:
\begin{align}\label{354}
P=\begin{bmatrix}2k_0k_1&2k_0k_2&k_0\\
2k_0k_2&2k_1k_2-k_0&k_1\\
k_0&k_1&k_2\end{bmatrix}.
\end{align}
In fact, it can be calculated that
\begin{align}\label{355}
PA+A^\top P=-2\text{diag}(k_0^2,k_1^2-2k_0k_2,k_2^2-k_1).
\end{align}
By  (\ref{solution}), we have \begin{align}\label{356}
\min\Big\{k_0^2,~k_{1}^2-2k_{0}k_{2},~ k_2^2\!-\!k_{1}\Big\}>\bar k>0
\end{align}
where $\bar k:=(k_0+k_1+k_2) L+ k_2 M^2$. Obviously, it follows from (\ref{355}) and (\ref{356}) that
$PA+A^{\top}P + 2\bar kI_{3}<0$.
Next, we aim to prove the positiveness of $P$, which can be accomplished by verifying the following three inequalities:
$$2k_0k_1>0,~~\det \begin{bmatrix}
2k_0k_1&2k_0k_2\\
2k_0k_2&2k_1k_2-k_0
\end{bmatrix}>0,~~\det (P)>0.$$
The first inequality is obvious since $k_0$, $k_1$ and $k_2$ are all positive.
Next, note that $k_1^2>2k_0k_2$ and $k_2^2>k_1$, we know that
\begin{align*}
\det \begin{bmatrix}
2k_0k_1&2k_0k_2\\
2k_0k_2&2k_1k_2-k_0
\end{bmatrix}
&=2k_0k_1(2k_1k_2-k_0)-(2k_0k_2)^2\\
&=4k_0k_2(k_1^2-k_0k_2)-2k_0^2k_1\\
&> 4k_0^2k_2^2-2k_0^2k_1\\
&=2k_0^2\left(2k_2^2-k_1\right)
>0.
\end{align*}
Finally, by some simple calculations, the determinant the matrix of $P$ (see (\ref{354})) is given by
\begin{align}
\det (P)=k_0\left(4k_1^2k_2^2+k_0^2-2k_1^3-4k_0k_2^3\right),
\end{align}
which is positive since
\begin{align*}
4k_1^2k_2^2+k_0^2-2k_1^3-4k_0k_2^3
&=k_1^2(4k_2^2-2k_1)-4k_0k_2^3+k_0^2\\
&>k_1^2(4k_2^2-2k_2^2)-4k_0k_2^3+k_0^2\\
&=2k_2^2(k_1^2-2k_0k_2)+k_0^3
>k_0^3>0.
\end{align*}

Case 3: When $n=3$, it suffices to construct a $4\times 4$ matrix $P>0$ such that (\ref{P}) and (\ref{P1}) hold. Let us choose $$P:=\begin{bmatrix}
2k_0k_1&2k_0k_2&2k_0k_3&k_0\\
2k_0k_2&2k_1k_2-2k_0k_3&2k_1k_3-k_0&k_1\\
2k_0k_3&2k_1k_3-k_0&2k_2k_3-k_1&k_2\\
k_0&k_1&k_2&k_3\end{bmatrix},$$
and  $Q:=-(PA+A^{\top}P)$, where \begin{align}\label{a0}A:=\begin{bmatrix}
0&1&0&0\\
0&0&1&0\\
0&0&0&1\\
-k_0&-k_1&-k_2&-k_3
\end{bmatrix}.\end{align}
By some simple calculations, we can obtain
$$Q=2\text{diag}\left(k_0^2, k_1^2-2k_0k_2,  k_2^2-2k_1k_3+k_0, k_3^2-k_2\right).$$
Now, suppose that $(k_0,k_1,k_2,k_3)$ satisfy (\ref{solution}), i.e.,
$$\min\left\{k_0^2,~k_1^2-2k_{0}k_{2},~k_2^2-2k_1k_3,~ k_3^2-k_{2}\right\}>\bar k,$$
where $\bar k=\sum_{i=0}^3 k_iL+ k_3 M^2$, then we easily obtain
 $Q>0$.

Next, we will show that the matrix $A$ defined by (\ref{a0}) is Hurwitz.
The characteristic polynomial of $A$ is given by $s^4+k_3s^3+k_2s^2+k_1s+k_0$. By the Routh-Hurwitz criteria, $A$ is Hurwitz if and only if all $k_i>0$ and
\begin{align}\label{hurwitz}
k_1k_2k_3-k_1^2-k_0k_3^2>0.
\end{align}
 To prove (\ref{hurwitz}), we define a function $h$ as follows:
\begin{align}
h(x)=k_1k_2x-k_1^2-k_0x^2,~\sqrt{k_2}<x<\frac{k_2^2}{2k_1},
\end{align}
then the derivative of $h$ satisfies
$$h'(x)=k_1k_2-2k_0x>h'\left(\frac{k_2^2}{2k_1}\right)=\frac{(k_1^2-k_0k_2)k_2}{k_1}>0,$$
which implies
$$h(x)>h(\sqrt{k_2})=k_1k_2\sqrt{k_2}-k_1^2-k_0k_2.$$
Note that $k_2^2>2k_1k_3$ and $k_3>\sqrt{k_2}$, we know $k_2\sqrt{k_2}>2k_1$, therefore
$h(x)>k_1k_2\sqrt{k_2}-k_1^2-k_0k_2>k_1^2-k_0k_2>0.$
Thus, $A$ is Hurwitz. Moreover, note that $P$ satisfies the matrix equation
\begin{equation}\label{RHS}
XA+A^{\top}X
=-2\text{diag}(k_0^2, k_1^2-2k_0k_2,  k_2^2-2k_1k_3+k_0, k_3^2-k_2).
\end{equation}
Since $A$ is Hurwitz (and thus $A$ and $-A^{\top}$ have no common eigenvalues) and the RHS of (\ref{RHS}) is a negative definite matrix, the above matrix equation exists a unique solution which is positive definite. Hence, $P>0$. Therefore, the  parameters $k_0$, $k_1$, $k_2$ and $k_3$ are admissible for the two-tuples $(L,M)$.

Case 4: $n>3$. We prove this case in four steps.

\emph{Step 1.} (Construction of $P$) We first define a set of numbers $\{p_{0j}, 0\le j\le n\}$ as follows:
\begin{align}\label{9}
p_{0j}=2k_0k_{j+1},~0\le j\le n-1;~~p_{0n}=k_0.
\end{align}
With the help of $p_{02}, \cdots, p_{0,n}$, we further define
\begin{align*}
p_{1j}=2k_1k_{j+1}-p_{0,j+1},~1\le j\le n-1;~p_{1n}=k_1.
\end{align*}
Similarly, suppose that for some $1\le i\le n$, $p_{i-1,j}$ has been defined for $i-1\le j\le n$, we define
\begin{align}\label{12}
&p_{ij}=2k_{i}k_{j+1}-p_{i-1,j+1}, \ \ i\le j\le n-1;\\
&p_{in}=k_{i}\label{122}
\end{align}
recursively. Combine (\ref{9})-(\ref{122}), one can see that $p_{ij}$ has been well defined for all $0\le i\le j\le n$. Besides, denote
$$p_{ij}:=p_{ji}, ~\text{for}~ 0\le j<i\le n,$$
and let $P$ be an $(n+1)\times (n+1)$ matrix given by $$P:=(p_{ij})_{0\le i,j\le n},$$
where $p_{ij}$ are the numbers just defined above.

We will show that the above defined matrix $P$ is positive definite and satisfies conditions (\ref{P}) and $(\ref{P1})$.

From (\ref{122}), it is easy to see
\begin{align*}
P[0,\cdots,0,1]^{\top}=[p_{0n},p_{1n},\cdots,p_{nn}]^{\top}
=[k_0,k_1,\cdots,k_n]^{\top},
\end{align*}
which implies that (\ref{P}) holds.

\emph{Step 2.} The above defined $p_{ij}$ satisfy the following:
\begin{align}\label{14}
0\le p_{ij}\le 2k_{i}k_{j+1},~  0\le i\le j\le n-1.
\end{align}
We use induction to prove (\ref{14}).
Indeed, for $i=0$, from (\ref{9}), we know
$$0\le p_{0j}=2k_{0}k_{j+1},~  0\le j\le n-1,$$ therefore
(\ref{14}) holds for $i=0.$ Now, suppose that
(\ref{14}) holds for some $0\le i_0<n-1$. Then, by the relationship (\ref{12}), we know that
\begin{align}\label{15}
p_{i_0+1,j}=2k_{i_0+1}k_{j+1}-p_{i_0,j+1},~i_0+1\le j\le n-1,
\end{align}
which yields that $p_{i_0+1,j}\le 2k_{i_0+1}k_{j+1}$, for $i_0+1\le j\le n-1$.
Next, we aim to show that $p_{i_0+1,j}\geq 0$, for $i_0+1\le j\le n-1$.

By induction we have
\begin{align}\label{16}
	p_{i_0,j+1}\le 2k_{i_0}k_{j+2},~ i_0+1\le j\le n-2.
\end{align}
Moreover, since $k_j^2-2k_{j-1}k_{j+1}>0, 1\le j\le n-1$, we have
\begin{equation*}
	\frac{k_{i}}{k_{i-1}}>2\frac{k_{i+1}}{k_{i}}>\cdots>2^{j-i+1}\frac{k_{j+1}}{k_j}, \ 1\leq i\leq j\leq n-1.
\end{equation*}
Hence
\begin{equation}\label{0507-1}
	k_ik_j>2^{j-i+1}k_{i-1}k_{j+1}>k_{i-1}k_{j+1}, \ 1\leq i\leq j\leq n-1.
\end{equation}
Combine this with (\ref{16}) and (\ref{15}), we have $p_{i_0+1,j}\geq 0,~i_0+1\le j\le n-2.$ Finally, for $j=n-1$,
it follows from (\ref{15}) and (\ref{122}) that
$$p_{i_0+1,n-1}=2k_{i_0+1}k_n-p_{i_0,n}=2k_{i_0+1}k_n-k_{i_0}.$$ Since $k_n^2>k_{n-1}$, by \eqref{0507-1} we can obtain
\begin{align}\label{ki}
k_{i_0+1}k_n=k_{i_0+1}k_{n-1}\frac{k_n}{k_{n-1}}>2^{n-i_0-1}k_{i_0}\frac{k_n^2}{k_{n-1}}>k_{i_0}.
\end{align}
Therefore, $p_{i_0+1,n-1}>0$.   Hence, $p_{i_0+1,j}\geq 0$, for $i_0+1\le j\le n-1$ and (\ref{14}) is proved.

\emph{Step 3.} In this step, we  calculate each element of $Q:=-(PA+A^{\top}P)$, where $A$ is defined in (\ref{A}).

Denote $Q:=(q_{ij})_{0\le i,j\le n}$,
then $q_{ij}$, $i<j$ can be obtained as follows:
$$q_{ij}=
\begin{cases}
2k_{0}k_{j}-p_{0,j-1}, & 0=i<j\le n,\\
2k_{i}k_{j}-p_{i,j-1}-p_{j,i-1}, & 1\le i<j\le n.
\end{cases}$$
For $i=0$, it can be seen from (\ref{9}) that
$$q_{0j}=2k_{0}k_{j}-p_{0,j-1}=0, ~\text{for all}~ 1\le j\le n.$$
For $i\geq 1$, note that $p_{j,i-1}=p_{i-1,j}$, we conclude from \eqref{12} that
$q_{ij}=0$ for any $i<j$. Therefore, $Q$ is a diagonal matrix. Next,  the diagonal element of $Q$ can be calculated as follows:
\begin{align}q_{00}=2k_0^2,~~q_{ii}=2(k_{i}^2-p_{i-1,i}),~~1 \le i\le n,\end{align}
which in turn gives
\begin{align}\label{19}
\lambda_{\min}[Q/2]=\min\{k_0^2,k_{i}^2-p_{i-1,i},~1 \le i\le n\}.
\end{align}
In addition, from (\ref{14}), we know that $p_{i-1,i}\le 2k_{i-1}k_{i+1}$ for $1 \le i\le n-1$. Combine this with (\ref{19}), we obtain
\begin{align*}
\lambda_{\min}[Q/2]
\geq \min\{k_0^2,k_{i}^2-2k_{i-1}k_{i+1},~1 \le i\le n-1, k_n^2-k_{n-1}\}.\end{align*}
Recall $k_0$, $k_1$, $\cdots,$ $k_n$ satisfy (\ref{solution}), we have
$\lambda_{\min}[Q/2]>\bar k>0$, which in turn gives
$$PA+A^{\top}P + 2\bar kI_{n+1}<0.$$
Therefore, (\ref{P1}) is proved.

\emph{Step 4.} We show that the matrix $P$ is positive definite.
By Lemma \ref{lemma hurwitz} in the Appendix, we know that the matrix $A$ is Hurwitz.
Note that $P$ satisfies the matrix equation
$$PA+A^{\top}P=-Q.$$
Since $A$ is Hurwitz and $Q$ is positive definite, we conclude that the above matrix equation exists a unique positive definite solution,
which yields the positive definiteness of $P$. Therefore, the parameters $k_0$, $k_1$, $k_2$ and $k_3$ are admissible for the two-tuples $(L,M)$.
\end{proof}

\begin{proof}[\textnormal{Proof of Theorem \ref{Theorem pd}}.]
We only provide an outline of the proof. Denote
\begin{align}\label{5.12}\nonumber
 y_1(t)=x_1(t)-y^*, ~y_i(t)=x_i(t), \ 2\leq i\leq n,
\end{align}
then the extended PD control $u(t)=-\sum_{i=1}^n k_iy_i(t)$ and system \eqref{n-order control system} with \eqref{extended pd controller} can be rewritten as the following:
\begin{equation}
	\begin{split}
		\mathrm{d}y_i&=y_{i+1}\mathrm{d}t, \ \ \ \ \ \   1\leq i\leq n-1,\\
	    \mathrm{d}y_n&=f\Big(y+z^*; -\sum_{i=1}^n k_iy_i \Big)\mathrm{d}t+g(y+z^*)\mathrm{d}B_t,
	\end{split}
\end{equation}
where $y=\left[y_1^{\mathsf{T}},\cdots, y_n^{\mathsf{T}}\right]^{\mathsf{T}}$.
Similar to (\ref{24}), we can express $f(y+z^*; u)$ as follows:
\begin{align}
f(y+z^*; u)=f(y+z^*;0)+\theta(y;u)u,
\end{align}
where $$\theta(y;u)=\int_0^1 \frac{\partial f}{\partial \bar u}(y+z^*;tu)\mathrm{d}t$$ is a $d\times d$ matrix satisfying $\theta(y;u)\geq I_d$  for all $y,u\in\mathbb{R}^d$.

Now, consider the following Lyapunov function:
$$V(y)=\frac{1}{2}y^{\mathsf{T}}\hat P y,~~\hat P=P_0\otimes I_d,~~ y=\left[y_1^{\mathsf{T}},\cdots, y_n^{\mathsf{T}}\right]^{\mathsf{T}}$$
and set
\begin{align*}
b(Y)=\begin{bmatrix}y_2\\\vdots\\ y_n\\
f(y+z^*; u)\end{bmatrix},~~
\sigma(Y)=\begin{bmatrix}\mathbf{0}_{d\times m}\\\vdots\\ \mathbf{0}_{d\times m}\\
g(y+z^*)\end{bmatrix},
\end{align*}
 then by the definition \eqref{definition of operator L} of $\mathcal{L}$, we have
\begin{align}\label{3.7}
&\mathcal{L}V
=\sum_{i=1}^{n-1}\frac{\partial V}{\partial y_i}y_{i+1}+
\frac{\partial V}{\partial y_{n}}f(y\!+\!z^*; u)+\frac{1}{2}{\rm tr}\Big\{\sigma^{\top}(y)\hat P \sigma(y)\Big\}.
\end{align}
Note that $\frac{\partial V}{\partial y_{n}}=\sum_{i=1}^nk_iy_i^\top$ (which can be obtained from (\ref{PP})) and $|f(y+z^*; 0)|\le L|y|+|f(z^*;0)|$, it can be deduced that
\begin{align*}
\frac{\partial V}{\partial y_{n}}f(y+z^*; u)
&=\sum_{i=1}^nk_iy_i^\top\Big[f(y+z^*; 0)-\theta(y;u)\sum_{i=1}^nk_iy_i\Big]\\
&\le  \sum_{i=1}^nk_iy_i^\top f(y+z^*; 0)-\Big(\sum_{i=1}^nk_iy_i^\top\Big)\sum_{i=1}^nk_iy_i\\
&\le  \sum_{i=1}^nk_i\left(L|y|^2+|f(z^*;0)||y|\right)-\frac{\partial V}{\partial y_{n}}\sum_{i=1}^nk_iy_i.
\end{align*}
Similar to the proof of Theorem \ref{Theorem 3.1}, we know
\begin{align*}
\frac{1}{2}{\rm tr}\Big\{\sigma^{\top}(y)\hat P \sigma(y)\Big\}
\le k_nM^2|y|^2+k_n\|g(z^*)\|_{{\rm HS}}^2.
\end{align*}
From (\ref{3.7}) and recall $\hat k=\sum_{i=1}^n k_iL+ k_n M^2$, we conclude that
\begin{align*}
\mathcal{L}V&\le \sum_{i=1}^{n-1}\frac{\partial V}{\partial y_i}y_{i+1}-
\frac{\partial V}{\partial y_{n}}\sum_{i=1}^nk_iy_i+\hat k|y|^2
+\sum_{i=1}^nk_i |y||f(z^*,0)|+k_n\|g(z^*)\|_{{\rm HS}}^2\\
&= \frac{1}{2}y^{\top} \left[\left(P_0A_0+A_0^{\top}P_0\right)\otimes I_d\right]y+\hat k|y|^2
+\sum_{i=1}^nk_i |y||f(z^*,0)|+k_n\|g(z^*)\|_{{\rm HS}}^2\\
&\le -\eta |y|^2+\sum_{i=1}^nk_i |y||f(z^*,0)|+k_n\|g(z^*)\|_{{\rm HS}}^2
\end{align*}
for some $\eta>0.$
Next, note that $$\sum_{i=1}^nk_i |y||f(z^*,0)|\le \eta |y|^2/2+\Big(\sum_{i=1}^nk_i\Big)^2|f(z^*,0)|^2/(2\eta),$$
one can get
\begin{align*}
\mathcal{L}V
\le &-\eta_0 V+c_0\big(|f(z^*,0)|^2+\|g(z^*)\|_{{\rm HS}}^2\big),
\end{align*}
for some positive constants $\eta_0,c_0$ which depending only on $(k_0,\cdots,k_n,L,M)$.
 Hence, it follows from Lemma \ref{lemma upper bound} and the positive definiteness of $P_0\otimes I_d$ that
\begin{equation*}
		\mathbf{E}|y(t)|^2\leq C_1|y(0)|^2e^{-\lambda t}+  C_2\left[\left|f(z^*;0)\right|^2+\left\|g(z^*)\right\|_{{\rm HS}}^2\right].
	\end{equation*}
Recall $x(t)-z^*=y(t)$, thus the proof of Theorem \ref{Theorem pd} is complete.
\end{proof}

\begin{proof}[\textnormal{Proof of Theorem \ref{Theorem uniform bounded with arbitrary small limit}}.]
Under notations (\ref{5.1}), we first introduce an invertible coordinate transformation of the state variables.
Let us consider the following transformation:
\begin{equation}
	\label{transform from y to z}
	 z_0(t)=y_0(t), \ z_{i}(t)=z_{i-1}(t)+\biggl(\prod_{j=1}^i\beta_j\biggr)y_i(t), \ 1\leq i\leq n,
\end{equation}
where $\beta_1,\cdots,\beta_n$ are given as in \eqref{1616}. Then the following system is derived from system \eqref{n-order system for y} (or system \eqref{n-order control system}) under the new coordinates $z=(z_0,\cdots,z_n)$.
\begin{equation}
	\label{n-order system for z}
	\begin{cases}
		\mathrm{d}z_i(t)\:=\sum_{j=0}^i\frac{1}{\beta_{j+1}}(z_{j+1}(t)-z_j(t))\mathrm{d}t, \ \ \ \ \ \ \ \  0\leq i\leq n-1,\\
		\mathrm{d}z_n(t)=\sum_{j=0}^{n-1}\frac{1}{\beta_{j+1}}(z_{j+1}(t)-z_j(t))\mathrm{d}t +f_{\beta}(z(t); u(t))\mathrm{d}t+g_{\beta}(z(t))\mathrm{d}B_t,
	\end{cases}
\end{equation}
where
\begin{equation*}
    f_{\beta}(z;u)=\widehat{\beta}_nf\left(z^{\beta}+z^*;u\right), \  \ \
	g_{\beta}(z)=\widehat{\beta}_ng\left(z^{\beta}+z^*\right),
\end{equation*}
in which $\widehat{\beta}_i:=\prod_{j=1}^i\beta_j, \ 1\leq i\leq n$, and
\begin{equation*}
	  z^{\beta}:=\biggl[\frac{1}{\widehat{\beta}_1}\big(z_1^{\top}-z_0^{\top}\big), \frac{1}{\widehat{\beta}_2}\big(z_2^{\top}-z_1^{\top}\big),\cdots, \frac{1}{\widehat{\beta}_n}\big(z_n^{\top}-z_{n-1}^{\top}\big)\biggr]^{\top}.
\end{equation*}
Since $(k_0,k_1,\cdots,k_n)$ are given as in \eqref{parameters set of n-order system}, the extended PID controller can be expressed in the new coordinates \eqref{transform from y to z} as follows:
\begin{equation}\label{controller in z}
    u(t)=-k_0y_0(t)-\sum_{i=1}^nk_iy_i(t)+u^*
        =-kz_n(t)+u^*.
\end{equation}
Now set

\begin{align*}
	b(z)=\begin{bmatrix}\frac{1}{\beta_1}(z_1-z_0)\\
		\vdots\\
		\sum_{j=0}^{n-1}\frac{1}{\beta_{j+1}}(z_{j+1}-z_j)\\
		\sum_{j=0}^{n-1}\frac{1}{\beta_{j+1}}(z_{j+1}-z_j)+f_{\beta}(z;-kz_n+u^*)\end{bmatrix}
\end{align*}
and
\begin{align*}
	\sigma(z)=\begin{bmatrix}\mathbf{0}_{d\times m}\\\vdots\\ \mathbf{0}_{d\times m}\\
		g_{\beta}(z)\end{bmatrix}.
\end{align*}
	Then system \eqref{n-order system for z} with control \eqref{controller in z} takes the following more compact form:
	\begin{equation}
		\label{SDE}
		\mathrm{d}z(t)=b(z(t))\mathrm{d}t+\sigma(z(t))\mathrm{d}B_t.
	\end{equation}
	From the definition  of $b(\cdot)$, we can obtain
\begin{equation}
		\label{1 in Theorem uniform bound}
	\begin{split}
			z^{\top}b(z)
			&=\sum_{i=0}^{n-1}\sum_{j=0}^i\frac{1}{\beta_{j+1}}z_i^{\top}(z_{j+1}-z_j)+\sum_{j=0}^{n-1}\frac{1}{\beta_{j+1}}z_n^{\top}(z_{j+1}-z_j)+z_n^{\top}f_{\beta}(z;-kz_n+u^*)\\
			&=-\frac{1}{\beta_1}|z_0|^2-\sum_{i=1}^{n-1}\Big(\frac{1}{\beta_{i+1}}-\frac{1}{\beta_i}\Big)|z_i|^2+
			\beta_n^{-1}|z_n|^2-\frac{1}{\beta_1}\sum_{i=2}^nz_0^{\top}z_i+\sum_{i=1}^{n-1}\frac{1}{\beta_i} z_i^{\top}z_{i+1}\\
			&\ \ \ \ -\sum_{i=1}^{n-2}\Big(\frac{1}{\beta_{i+1}}-\frac{1}{\beta_i}\Big)\sum_{j=i+2}^nz_i^{\top}z_j+z_n^{\top}f_{\beta}(z;-kz_n+u^*).
		\end{split}
	\end{equation}
	Note that $\beta_1>\beta_2>\cdots>\beta_n>0$. Then we have
	\begin{equation*}
		\Big|-\frac{1}{\beta_1}z_0^{\top}z_i\Big|\leq \frac{1}{n\beta_1}|z_0|^2+\frac{n}{4\beta_1}|z_i|^2, \ \ \ \ \Big|\frac{1}{\beta_i}z_i^{\top}z_{i+1}\Big|\leq \frac{1}{n\beta_i}|z_i|^2+\frac{n}{4\beta_i}|z_{i+1}|^2
	\end{equation*}
	and
	\begin{equation*}
		\begin{split}
			\Big|-\Big(\frac{1}{\beta_{i+1}}-\frac{1}{\beta_i}\Big)z_i^{\top}z_j\Big|
			\leq \frac{1}{n}\Big(\frac{1}{\beta_{i+1}}-\frac{1}{\beta_i}\Big)|z_i|^2+\frac{n}{4}\Big(\frac{1}{\beta_{i+1}}-\frac{1}{\beta_i}\Big)|z_j|^2,
		\end{split}
    \end{equation*}
which yields that
\begin{align*}
\Big|-\frac{1}{\beta_1}\sum_{i=2}^nz_0^{\top}z_i\Big|\leq  \frac{n-1}{n\beta_1}|z_0|^2+\frac{n}{4\beta_1}\sum_{i=2}^n |z_i|^2,
\end{align*}
\begin{align*}
\bigg|\sum_{i=1}^{n-1}\frac{1}{\beta_i}z_i^{\top}z_{i+1}\bigg|\leq  \sum_{i=1}^{n-1}\frac{1}{n\beta_i}|z_i|^2+
\sum_{i=2}^n \frac{n}{4\beta_{i-1}}|z_i|^2,
\end{align*}
and
\begin{align*}
&\ \ \ \ \bigg|\sum_{i=1}^{n-2}\Big(\frac{1}{\beta_{i+1}}\!-\!\frac{1}{\beta_i}\Big)\!\sum_{j=i+2}^n z_i^{\top}z_j\bigg|\\
&\le \sum_{i=1}^{n-2}\frac{n-(i+1)}{n}\Big(\frac{1}{\beta_{i+1}}-\frac{1}{\beta_i}\Big)|z_i|^2+\sum_{j=3}^n\sum_{i=1}^{j-2}\frac{n}{4}\Big(\frac{1}{\beta_{i+1}}-\frac{1}{\beta_i}\Big)|z_j|^2\\
&=\sum_{i=1}^{n-2}\frac{n-(i+1)}{n}\Big(\frac{1}{\beta_{i+1}}-\frac{1}{\beta_i}\Big)|z_i|^2+\sum_{j=3}^n\frac{n}{4}\Big(\frac{1}{\beta_{j-1}}-\frac{1}{\beta_1}\Big)|z_j|^2.
\end{align*}
Consequently, we have
	\begin{equation}
		\label{2 in Theorem uniform bound}
		\begin{split}
		 &\ \ \ \ -\frac{1}{\beta_1}\sum_{i=2}^nz_0^{\top}z_i+\sum_{i=1}^{n-1}\frac{1}{\beta_i}z_i^{\top}z_{i+1}-\sum_{i=1}^{n-2}\Big(\frac{1}{\beta_{i+1}}-\frac{1}{\beta_i}\Big)\sum_{j=i+2}^n z_i^{\top}z_j\\
		 &\leq \frac{n-1}{n\beta_1}|z_0|^2+\sum_{i=1}^{n-1}\frac{1}{n\beta_i}|z_i|^2+\sum_{i=2}^n\frac{n}{2\beta_{i-1}}|z_i|^2+\sum_{i=1}^{n-2}\frac{n-(i+1)}{n}\Big(\frac{1}{\beta_{i+1}}-\frac{1}{\beta_i}\Big)|z_i|^2.
		\end{split}
	\end{equation}
Then it follows from \eqref{1 in Theorem uniform bound} and \eqref{2 in Theorem uniform bound} that
	\begin{equation*}
		\begin{split}
			z^{\top}b(z)&\leq -\frac{1}{n\beta_1}|z_0|^2-\Big(\frac{2}{n\beta_2}-\frac{3}{n\beta_1}\Big)|z_1|^2-\sum_{i=2}^{n-1}\Big(\frac{i+1}{n\beta_{i+1}}-\frac{i+2}{n\beta_i}-\frac{n}{2\beta_{i-1}}\Big)|z_i|^2\\
			&\ \ \ \ +\Bigl(\frac{1}{\beta_n}+\frac{n}{2\beta_{n-1}}\Bigr)|z_n|^2+z_n^{\top}f_{\beta}(z;-kz_n+u^*).
		\end{split}
	\end{equation*}
	Since $\beta_1,\cdots,\beta_n$ are given by formula \eqref{1616}, it is easy to check that
	\begin{equation*}
		\frac{1}{n\beta_1}>\lambda+8M^2, \ \ \ \ \frac{2}{n\beta_2}-\frac{3}{n\beta_1}\geq \frac{2}{\beta_1}-\frac{3}{n\beta_1}>\frac{1}{n\beta_1}>\lambda+8M^2,
	\end{equation*}
	and for $2\leq i\leq n-1$,
	\begin{equation*}
			\frac{i+1}{n\beta_{i+1}}-\frac{i+2}{n\beta_i}-\frac{n}{2\beta_{i-1}}>\frac{1}{\beta_i}-\frac{n}{2\beta_{i-1}}>\frac{n}{2\beta_{i-1}}>\frac{1}{n\beta_1}>\lambda+8M^2.
	\end{equation*}
	Note also that
	\begin{equation*}
		\frac{1}{\beta_n}+\frac{n}{2\beta_{n-1}}+\lambda+8M^2<\frac{1}{\beta_n}+\frac{1}{2\beta_n}+\frac{1}{n\beta_1}<\frac{2}{\beta_n},
	\end{equation*}
	then we have
	\begin{equation*}
			z^{\top}b(z)\leq -(\lambda+8M^2) |z|^2+\frac{2}{\beta_n}|z_n|^2
			+z_n^{\top}f_{\beta}(z,-kz_n+u^*).
	\end{equation*}
	Note that
	\begin{equation}\label{0417-1}
		\begin{split}
			z_n^{\top}f_{\beta}(z;-kz_n+u^*)
			&=\widehat{\beta}_nz_n^{\top}f(z^{\beta}+z^*;-kz_n+u^*)\\
			&= \widehat{\beta}_n\bigl(z_n^{\top}f(z^{\beta}+z^*;u^*)-kz_n^{\top}\theta(z^{\beta};-kz_n)z_n\bigr),
		\end{split}
	\end{equation}
	where $\theta(z^{\beta};-kz_n)$ is defined as in \eqref{derivative matrix theta}. Recall Assumptions \ref{A 1}-\ref{A 2}, we derive
	\begin{equation*}
		\begin{split}
			z_n^{\top}f_{\beta}(z;-kz_n+u^*)\leq L\widehat{\beta}_n|z_n||z^{\beta}|-k\widehat{\beta}_n|z_n|^2.
		\end{split}
	\end{equation*}
	Since $0<\beta_2,\cdots, \beta_n<1$, it is easy to check that
    $$|z^{\beta}|^2\leq\frac{1}{\widehat{\beta}_n^2} \sum_{i=1}^n|z_i-z_{i-1}|^2  \le \frac{4}{\widehat{\beta}_n^2}|z|^2.$$
	Hence
	\begin{equation*}
			z_n^{\top}f_{\beta}(z;-kz_n+u^*)
			\leq 2L|z_n||z|-k\widehat{\beta}_n|z_n|^2
			\leq \frac{\lambda+8M^2}{2}|z|^2-\Bigl(k\widehat{\beta}_n-\frac{2L^2}{\lambda+8M^2}\Bigr)|z_n|^2.
	\end{equation*}
	Since
	\begin{equation*}
		\begin{split}
			k\widehat{\beta}_n>\frac{1}{\widehat{\beta}_n}\bigg(1+3L+\frac{2L^2}{\lambda+8M^2}\bigg)
			>\frac{1}{\widehat{\beta}_n}+\frac{2L^2}{\lambda+8M^2}>\frac{2}{\beta_n}+\frac{2L^2}{\lambda+8M^2},
		\end{split}
	\end{equation*}
	we obtain that
	\begin{equation}
		\label{1207-1}
		z^{\top}b(z)\leq -\frac{\lambda+8M^2}{2}|z|^2.
	\end{equation}
	In addition, by the definition of $g_{\beta}$, we know that
	\begin{equation}
		\label{1207-2}
		\|g_{\beta}(z)\|_{{\rm HS}}^2=\widehat{\beta}_n^2\|g(z^{\beta}+z^*)\|_{{\rm HS}}^2\leq 8M^2|z|^2+2\widehat{\beta}_n^2\|g(z^*)\|_{{\rm HS}}^2.
	\end{equation}
	
	Now consider the Lyapunov function $V(z)=|z|^2$, \eqref{1207-1} and \eqref{1207-2} yield
	\begin{equation}\label{exist}
		\begin{split}
			\mathcal{L}V(z)=2z^{\top}b(z)+\|g_{\beta}(z)\|_{{\rm HS}}^2
			\leq -\lambda V(z)+2\widehat{\beta}_n^2\|g(z^*)\|_{{\rm HS}}^2.
		\end{split}
	\end{equation}
	By Lemma \ref{lemma upper bound}, we know that there exists a unique continuous solution $z(t)$ of SDE \eqref{SDE} and
	\begin{equation}
		\label{1208-1}
		\mathbf{E}[|z(t)|^2]\leq |z(0)|^2e^{-\lambda t}+\frac{2\widehat{\beta}_n^2}{\lambda}\|g(z^*)\|_{{\rm HS}}^2.
	\end{equation}
	Recall transformations \eqref{5.1}, \eqref{transform from y to z} and the fact that $k_0=k\geq 1$, we have
	\begin{equation*}
		|z_0(0)|^2=|y_0(0)|^2=\Big|\frac{u^*}{k_0}\Big|^2\leq |u^*|^2,
	\end{equation*}
	and for $1\leq i\leq n$,
	\begin{equation*}
		\begin{split}
			|z_i(0)|^2=\Big|y_0(0)+\sum_{j=1}^i\widehat{\beta}_jy_j(0)\Big|^2
			\leq \Bigl(\Big|\frac{u^*}{k_0}\Big|+\sum_{j=1}^i|y_i(0)|\Bigr)^2
			\leq 2|u^*|^2+2i|x(0)-z^*|^2.
		\end{split}
	\end{equation*}
	Hence
	\begin{equation}\label{bound of z}
		|z(0)|^2\leq 2n^2\big( |x(0)-z^*|^2+|u^*|^2 \big).
	\end{equation}
	Then for $1\leq i\leq n$, we derive that
	\begin{equation}\label{0429}
		\begin{split}
			\mathbf{E}\bigl[|y_i(t)|^2\bigr]=&\frac{1}{\widehat{\beta}_i^2}\mathbf{E}\bigl[|z_i(t)-z_{i-1}(t)|^2\bigr]\leq \frac{2}{\widehat{\beta}_n^2}\mathbf{E}\bigl[|z(t)|^2\bigr]\\
			\leq& \frac{4n^2}{\widehat{\beta}_n^2}\big( |x(0)-z^*|^2+|u^*|^2 \big)e^{-\lambda t}+\frac{4}{\lambda}\|g(z^*)\|_{{\rm HS}}^2.
		\end{split}
	\end{equation}
	Thus \eqref{limit bound of x} follows.

	Now let us consider the case that $\big\|\frac{\partial f}{\partial u}\big\|\leq R$, i.e.,
	\begin{equation}\label{0418-1}
		\bigg|\frac{\partial f}{\partial u}(x;u)v\bigg|\leq R|v|, \ \text{ for all } \ x\in \mathbb{R}^{nd}, \ u,v\in \mathbb{R}^d.
	\end{equation}
	For $\tilde{y}:=\big[\tilde{y}_1^{\top},\cdots,\tilde{y}_n^{\top}\big]^{\top}\in \mathbb{R}^{nd}$, set
	\begin{align*}
		\tilde{b}(t,\tilde{y})=\begin{bmatrix}\tilde{y}_2\\
			\vdots\\
			\tilde{y}_n\\
			f\big(\tilde{y}+z^*;u(t)\big)\end{bmatrix},\
			\tilde{\sigma}(\tilde{y})=\begin{bmatrix}\mathbf{0}_{d\times m}\\
				\vdots\\
				\mathbf{0}_{d\times m}\\
				g(\tilde{y}+z^*)\end{bmatrix},
	\end{align*}
	where
	\begin{equation}\label{0503-1}
		u(t)=-\sum_{i=0}^nk_iy_i(t)+u^*,
	\end{equation}
	and
	\begin{equation}\label{0503-2}
		Y(t):=\bigl[y_0^{\top}(t),y_1^{\top}(t),\cdots,y_n^{\top}(t)\bigr]^{\top}:=\bigl[y_0^{\top}(t);y^{\top}(t)\bigr]^{\top}
	\end{equation}
	is the solution of the system \eqref{n-order system for y}, which is regarded as a fixed adapted stochastic process in the definition of $\tilde{b}$. Let us consider the following SDE:
	\begin{equation}\label{SDE-1}
		\begin{cases}
			d\tilde{y}(t)=\tilde{b}(t,\tilde{y}(t))\mathrm{d}t+\tilde{\sigma}(\tilde{y}(t))\mathrm{d}B_t,\\
			\tilde{y}(0)=y(0),
		\end{cases}
	\end{equation}
	where $y(0):=\big[y_1^{\top}(0),\cdots,y_n^{\top}(0)\big]^{\top}$ is the same as the initial condition of system \eqref{n-order system for y}. Then by the uniqueness solution of system \eqref{n-order system for y} and SDE \eqref{SDE-1}, we know that their solutions are the same, i.e. $\tilde{y}(t)=y(t)$ for all $t\geq 0$.

	By the construction of $\tilde{b}$, we have
	\begin{equation}\label{0503-3}
		\begin{split}
			2\tilde{y}^{\top}\tilde{b}(t,\tilde{y})&=\sum_{i=1}^{n-1}2\tilde{y}_i^{\top}\tilde{y}_{i+1}+2\tilde{y}_n^{\top}f\big(\tilde{y}+z^*;u(t)\big)\\
			&\geq -\sum_{i=1}^{n-1}\bigl(|\tilde{y}_i|^2+|\tilde{y}_{i+1}|^2\bigr)+2\tilde{y}_n^{\top}f(\tilde{y}+z^*,u^*)
			+2\tilde{y}_n^{\top}\theta(\tilde{y},u(t))(u(t)-u^*),
		\end{split}
	\end{equation}
	where $\theta(\tilde{y},u(t))$ is defined as in \eqref{derivative matrix theta}. Note that Assumptions \ref{A 1}-\ref{A 2} and \eqref{0418-1} yield
	\begin{equation*}
		2\tilde{y}_n^{\top}f(\tilde{y}+z^*,u^*)\geq -2L|\tilde{y}_n||\tilde{y}|\geq -2L|\tilde{y}|^2,
	\end{equation*}
	and
	\begin{equation*}
		\begin{split}
			&\ \ \ \ 2\tilde{y}_n^{\top}\theta(\tilde{y},u(t))(u(t)-u^*)\\
			&\geq -2R|\tilde{y}_n|\bigg|\sum_{i=0}^nk_iy_i(t)\bigg|
			\geq -\frac{16(n+1)R^2|\mathbf{k}|^2}{\lambda}|\tilde{y}_n|^2-\frac{\lambda}{16(n+1)}|Y(t)|^2,
		\end{split}
	\end{equation*}
	where $\mathbf{k}=[k_0,k_1,\cdots,k_n]^{\top}$.
	Hence
	\begin{equation}\label{0418-2}
		\begin{split}
			2\tilde{y}^{\top}\tilde{b}(t,\tilde{y})\geq -\left(2+2L+\frac{16(n+1)R^2|\mathbf{k}|^2}{\lambda}\right)|\tilde{y}|^2-\frac{\lambda}{16(n+1)}|Y(t)|^2.
		\end{split}
	\end{equation}
	On the other hand, Assumption \ref{A 1} also implies
	\begin{equation}\label{0418-3}
		\begin{split}
			\|\tilde{\sigma}(\tilde{y})\|_{{\rm HS}}^2=&\|g(\tilde{y}+z^*)\|_{{\rm HS}}^2=\|g(z^*)+g(\tilde{y}+z^*)-g(z^*)\|_{{\rm HS}}^2\\
			\geq& \bigl(\|g(z^*)\|_{{\rm HS}}-\|g(\tilde{y}+z^*)-g(z^*)\|_{{\rm HS}}\bigr)^2\\
			\geq& \frac{1}{2}\|g(z^*)\|_{{\rm HS}}^2-\|g(\tilde{y}+z^*)-g(z^*)\|_{{\rm HS}}^2\\
			\geq& \frac{1}{2}\|g(z^*)\|_{{\rm HS}}^2-M^2|\tilde{y}|^2,
		\end{split}
	\end{equation}
	where we have used the following inequality in \eqref{0418-3}:
	\begin{equation*}
		(a-b)^2\geq \frac{1}{2}a^2-b^2.
	\end{equation*}
	Then for the Lyapunov function $V(\tilde{y})=|\tilde{y}|^2$, \eqref{0418-2} and \eqref{0418-3} indicate
	\begin{equation*}
		\begin{split}
			\tilde{\mathcal{L}}_tV(\tilde{y})=2\tilde{y}^{\top}\tilde{b}(t,\tilde{y})+\|\tilde{\sigma}(\tilde{y})\|_{{\rm HS}}^2
			\geq -w|\tilde{y}|^2+\frac{1}{2}\|g(z^*)\|_{{\rm HS}}^2-\frac{\lambda}{16(n+1)}|Y(t)|^2,
		\end{split}
	\end{equation*}
	where
	$$w=2+2L+M^2+\frac{16(n+1)R^2|\mathbf{k}|^2}{\lambda}.$$
According to \eqref{1208-1}, \eqref{bound of z} and transformation \eqref{transform from y to z}, it can be obtained that $y_0(t)$ has the same estimate as in \eqref{0429}. Thus we have
	\begin{equation*}
		\begin{split}
			\frac{1}{2}\|g(z^*)\|_{{\rm HS}}^2-\frac{\lambda}{16(n+1)}\mathbf{E}\bigl[|Y(t)|^2\bigr]
			\geq \frac{1}{4}\|g(z^*)\|_{{\rm HS}}^2-\frac{\lambda n^2k_0^2}{4k_n^2}\bigl(|x(0)-z^*|^2+|u^*|^2\bigr)e^{-\lambda t}.
		\end{split}
	\end{equation*}
	Then it follows from Lemma \ref{lemma lower bound} and the fact $y(t)=\tilde{y}(t)$ that
	\begin{equation}\label{0430-1}
		\begin{split}
			\mathbf{E}\bigl[|y(t)|^2\bigr]
			\geq& |y(0)|^2e^{-wt}+\frac{1}{4w}\bigl(1-e^{-wt}\bigr)\|g(z^*)\|_{{\rm HS}}^2\\
			&-\frac{\lambda n^2k_0^2}{4k_n^2}\bigl(|x(0)-z^*|^2+|u^*|^2\bigr) e^{-w t}\int_{0}^{t}e^{(w-\lambda)s}\mathrm{d}s.
		\end{split}
	\end{equation}
	Note that
	\begin{equation}\label{0430-2}
		e^{-w t}\int_{0}^{t}e^{(w-\lambda)s}\mathrm{d}s\leq 1_{\{w\leq \lambda\}}e^{-w t}t+1_{\{w>\lambda\}}\frac{e^{-\lambda t}}{w-\lambda},
	\end{equation}
	then
	\begin{equation*}
		e^{-w t}\int_{0}^{t}e^{(w-\lambda)s}\mathrm{d}s \to 0 \text{ as  } t\to \infty.
	\end{equation*}
	Hence \eqref{limit lower bound of x} follows with $C_3=\frac{1}{4w}$.
\end{proof}

Next we give the proof of Proposition \ref{prop of lower bound}.

\begin{proof}[\textnormal{Proof of Proposition \ref{prop of lower bound}}.]
	For any $p> 1$, we first give the $2p$-th moment estimate of the solution $(y_0(t),y_1(t),\cdots,y_n(t))$ to system \eqref{n-order system for y}.
	Recall transformations \eqref{5.1}, \eqref{transform from y to z} and their corresponding systems \eqref{n-order system for y}, \eqref{n-order system for z}. Let us consider the Lyapunov function $V_p(z):=|z|^{2p}=V^p(x)$, then \eqref{exist} and the definition \eqref{definition of operator L} of $\mathcal{L}$ imply
	\begin{equation*}
		\begin{split}
			\mathcal{L}V_p(z)&=pV^{p-1}(z)\mathcal{L}V(z)
			+\frac{1}{2}p(p-1)V^{p-2}(z)\bigg|\sigma^{\top}(z)\frac{\partial V(z)}{\partial z}\bigg|^2\\
			&\leq -\lambda p V_p(z)+2pV^{p-1}(z)\widehat{\beta}_n^2\|g(z^*)\|_{{\rm HS}}^2
			+2p(p-1)V^{p-2}(z)|g^{\top}_{\beta}(z)z_n|^2.
		\end{split}
	\end{equation*}
	By \eqref{1207-2} and the fact that $0\leq g_{\beta}(z)g^{\top}_{\beta}(z)\leq \|g_{\beta}(z)\|_{{\rm HS}}^2 I_d$,
	we have
	\begin{equation*}
		\begin{split}
			|g_{\beta}^{\top}(z)z_n|^2=z_n^{\top}g_{\beta}(z)g^{\top}_{\beta}(z)z_n
			\leq \|g_{\beta}(z)\|_{{\rm HS}}^2|z|^2
			\leq 8M^2|z|^4+2\widehat{\beta}_n^2\|g(z^*)\|_{{\rm HS}}^2|z|^2.
		\end{split}
	\end{equation*}
	Thus
	\begin{equation*}
		\begin{split}
			\mathcal{L}V_p(z)&\leq -p\bigl(\lambda-16(p-1)M^2\bigr)V_p(z)
			+4p^2\widehat{\beta}_n^2\|g(z^*)\|_{{\rm HS}}^2 V^{p-1}(z)\\
			&\leq -\frac{p}{2}\bigl(\lambda-16(p-1)M^2\bigr)V_p(z)
			+4^pp^{2p-1}a^{-p}\widehat{\beta}_n^{2p}\|g(z^*)\|_{{\rm HS}}^{2p},
		\end{split}
	\end{equation*}
	where $a=\big[\frac{p^2(\lambda-16(p-1)M^2)}{2(p-1)}\big]^{\frac{p-1}{p}}$ and we have used the following inequality:
	\begin{equation*}
		\begin{split}
			4p^2\widehat{\beta}_n^2\|g(z^*)\|_{{\rm HS}}^2 V^{p-1}(z)\leq \frac{p-1}{p}a^{\frac{p}{p-1}}V^p(z)
			+\frac{1}{p}a^{-p}\bigl(4p^2\widehat{\beta}_n^2\|g(z^*)\|_{{\rm HS}}^2\bigr)^p.
		\end{split}
	\end{equation*}
	Then Lemma \ref{lemma upper bound} yields
	\begin{equation*}
		\begin{split}
			\mathbf{E}\bigl[|z(t)|^{2p}\bigr]\leq |z(0)|^{2p}e^{-\frac{p}{2}(\lambda-16(p-1)M^2)t}
			+\frac{2^{2p+1}p^{2p-2}\widehat{\beta}_n^{2p}}{a^p(\lambda-16(p-1)M^2)}\|g(z^*)\|_{{\rm HS}}^{2p}.
		\end{split}
	\end{equation*}
	Therefore, according to the transformation \eqref{transform from y to z} and the fact that
	\begin{equation*}
		1 \leq \frac{k_0}{k_i}=\frac{1}{\widehat{\beta}_i}\leq \frac{1}{\widehat{\beta}_n}=\frac{k_0}{k_n}, \ \ 1\leq i\leq n,
	\end{equation*}
	we conclude that for all $i\in \{0,1,\cdots, n\}$,
	\begin{equation}\label{0419-1}
		\begin{split}
			\mathbf{E}\bigl[|y_i(t)|^{2p}\bigr]\leq \frac{2^pk_0^{2p}}{k_n^{2p}}|z(0)|^{2p}e^{-\frac{p}{2}(\lambda-16(p-1)M^2)t}
			+\frac{4^{p+1}p^{2p-2}}{a^p(\lambda-16(p-1)M^2)}\|g(z^*)\|_{{\rm HS}}^{2p}.
		\end{split}
	\end{equation}

	Now for $\widehat{y}:=\big[\widehat{y}_1^{\top},\cdots,\widehat{y}_n^{\top}\big]^{\top}\in \mathbb{R}^{nd}$, we set
	\begin{align*}
		\widehat{b}(t,\widehat{y})=\begin{bmatrix}
			\widehat{y}_2\\
			\vdots\\
			\widehat{y}_n\\
			f\big(y(t)+z^*;u(t)\big)\end{bmatrix},\
			\widehat{\sigma}(\widehat{y})=\begin{bmatrix}\mathbf{0}_{d\times m}\\
				\vdots\\
				\mathbf{0}_{d\times m}\\
				g(\widehat{y}+z^*)\end{bmatrix},
	\end{align*}
where $u(t)$ and $Y(t)=[y_0^{\top}(t),y^{\top}(t)]^{\top}$ are defined as in \eqref{0503-1}-\eqref{0503-2}.
Similar to the proof of Theorem \ref{Theorem uniform bounded with arbitrary small limit}, we consider the following SDE:
	\begin{equation}\label{SDE-2}
		\begin{cases}
			d\widehat{y}(t)=\widehat{b}(t,\widehat{y}(t))\mathrm{d}t+\widehat{\sigma}(\widehat{y}(t))\mathrm{d}B_t,\\
			\widehat{y}(0)=y(0).
		\end{cases}
	\end{equation}
	Then SDE \eqref{SDE-2} has a unique solution $\widehat{y}(t)$ which is also the same as $y(t)$.

	Note that $\lambda>16\gamma M^2$. Then \eqref{0419-1} and the fact that
	\begin{equation}\label{0419-3}
		\biggl(\sum_{i=1}^na_i\biggr)^p\leq C(n,p)\sum_{i=1}^na_i^p, \ \ n\geq 2, \ p>0, \ a_i\geq 0,
	\end{equation}
	indicate
	\begin{equation*}
		\begin{split}
			\mathbf{E}\bigl[|Y(t)|^{2(\gamma+1)}\bigr]\leq C_5|z(0)|^{2(\gamma+1)}e^{-\delta t}+C_6\|g(z^*)\|_{{\rm HS}}^{2(\gamma+1)},
		\end{split}
	\end{equation*}
	where
	\begin{equation*}
		\begin{split}
			&C_5=\frac{(n+1)C(n+1,\gamma+1)2^{\gamma+1}k_0^{2(\gamma+1)}}{k_n^{2(\gamma+1)}}, \\
			&C_6=\frac{(n+1)C(n+1,\gamma+1)4^{\gamma+2}(\gamma+1)^{2\gamma}}{a^{\gamma+1}(\lambda-16\gamma M^2)}, \\
			&\ \delta  \ =\frac{\gamma+1}{2}(\lambda-16\gamma M^2)>0.
		\end{split}
	\end{equation*}
	On the other hand, according to \eqref{1208-1} and transformation \eqref{transform from y to z}, $Y(t)$ has the following second moment estimate:
	\begin{equation*}
		\mathbf{E}\bigl[|Y(t)|^2\bigr]\leq C_1'|z(0)|^2e^{-\lambda t}+C_2'\|g(z^*)\|_{{\rm HS}}^2,
	\end{equation*}
	where
	\begin{equation*}
		C_1'=\frac{2(n+1)k_0^2}{k_n^2}, \ C_2'=\frac{4(n+1)}{\lambda}.
	\end{equation*}
	Similar to \eqref{0503-3}, we have
	\begin{equation}\label{0419-2}
		\begin{split}
			2\widehat{y}^{\top}\widehat{b}(t,\widehat{y})
			\geq -2|\widehat{y}|^2+2\widehat{y}_n^{\top}f(y(t)+z^*,u^*)
			-2\widehat{y}_n^{\top}\theta(y(t),u(t))\sum_{i=0}^nk_iy_i(t),
		\end{split}
	\end{equation}
	where
	\begin{equation}\label{0503-4}
		\theta(y(t),u(t))=\int_0^1 \frac{\partial f}{\partial \bar u}(y(t)+z^*;\bar u)dr,~\bar u=u^*+r(u(t)-u^*).
	\end{equation}
	Note that the Lipschitz condition of $f$ and the fact that $f(z^*,u^*)=0$ yield
	\begin{equation}\label{0419-4}
		\begin{split}
		2\widehat{y}_n^{\top}f(y(t)+z^*,u^*)
		\geq -2L|\widehat{y}_n||y(t)|
        \geq -8C_2'L^2|\widehat{y}_n|^2-\frac{1}{8C_2'}|Y(t)|^2.
		\end{split}
	\end{equation}
	Now let us deal with the third term in the right hand side of \eqref{0419-2}. Note first that
	\begin{equation*}
		|u(t)-u^*|=\bigg|\sum_{i=0}^nk_iy_i(t)\bigg|\leq |\mathbf{k}||Y(t)|,
	\end{equation*}
	where $\mathbf{k}:=[k_0,k_1,\cdots,k_n]^{\top}$. Then the polynomial growth condition \eqref{polynomial growth condition}  of $\frac{\partial f}{\partial u}$ and \eqref{0503-4} imply
	\begin{equation}\label{0419-5}
		\begin{split}
			2\widehat{y}_n^{\top}\theta(y(t),u(t))\sum_{i=0}^nk_iy_i(t)
			&\leq2|\mathbf{k}|R\Bigl(1+|y(t)+z^*|^{\gamma}+\bigl(|u^*|+|\mathbf{k}||Y(t)|\bigr)^{\gamma}\Bigr)|\widehat{y}_n||Y(t)|\\
			&\leq 2^{\gamma+1}|\mathbf{k}|R\bigl(1+|y^*|^{\gamma}+|u^*|^{\gamma}\bigr)|\widehat{y}_n||Y(t)|\\
			&\ \ \ \ +2^{\gamma+1}|\mathbf{k}|R\bigl(1+|\mathbf{k}|^{\gamma}\bigr)|\widehat{y}_n||Y(t)|^{\gamma+1}\\
			&\leq 2^{2\gamma+3}C_2'|\mathbf{k}|^2R^2\bigl(1+|y^*|^{\gamma}+|u^*|^{\gamma}\bigr)^2|\widehat{y}_n|^2\\
			& \ \ \ \ +2^{2\gamma+3}C_6|\mathbf{k}|^2R^2\big(1+|\mathbf{k}|^{\gamma}\big)^2\|g(z^*)\|_{{\rm HS}}^{2\gamma}|\widehat{y}_n|^2\\
			&\ \ \ \ +\frac{1}{8C_2'}|Y(t)|^2+\frac{1}{8C_6\|g(z^*)\|_{{\rm HS}}^{2\gamma}}|Y(t)|^{2(\gamma+1)}.
		\end{split}
	\end{equation}
	On the other hand, similar to \eqref{0418-3}, we also have
	\begin{equation}\label{0419-6}
		\|\widehat{\sigma}(\widehat{y})\|_{{\rm HS}}^2\geq \frac{1}{2}\|g(z^*)\|_{{\rm HS}}^2-M^2|\widehat{y}|^2.
	\end{equation}
	Then it follows from \eqref{0419-2}, \eqref{0419-4}, \eqref{0419-5} and \eqref{0419-6} that
	\begin{equation*}
		\begin{split}
			\widehat{\mathcal{L}}_tV(\widehat{y})&=2\widehat{y}^{\top}\widehat{b}(t,\widehat{y})+\|\widehat{\sigma}(\widehat{y})\|_{{\rm HS}}^2\\
			&\geq -A\bigl(1+|y^*|^{2\gamma}+|u^*|^{2\gamma}+\|g(z^*)\|_{{\rm HS}}^{2\gamma}\bigr)|\widehat{y}|^2\\
			&\ \ \ \ +\frac{1}{2}\|g(z^*)\|_{{\rm HS}}^2-\frac{1}{4C_2'}|Y(t)|^2
			-\frac{1}{8C_6\|g(z^*)\|_{{\rm HS}}^{2\gamma}}|Y(t)|^{2(\gamma+1)},
		\end{split}
	\end{equation*}
	where the positive constant $A$ only depend on $(k_0,\cdots,k_n,\lambda,L,M,R,\gamma,n)$. Note that $\widehat{y}(t)=y(t)$ for all $t\geq 0$ and
	\begin{equation*}
		\begin{split}
			&\ \ \ \ \frac{1}{2}\|g(z^*)\|_{{\rm HS}}^2-\frac{1}{4C_2'}\mathbf{E}\bigl[|Y(t)|^2\bigr]
			-\frac{1}{8C_6\|g(z^*)\|_{{\rm HS}}^{2\gamma}}\mathbf{E}\bigl[|Y(t)|^{2(\gamma+1)}\bigr]\\
			&\geq \frac{1}{8}\|g(z^*)\|_{{\rm HS}}^2-\frac{C_1'}{4C_2'}|z(0)|^2e^{-\lambda t}
			-\frac{C_5}{8C_6\|g(z^*)\|_{{\rm HS}}^{2\gamma}}|z(0)|^{2(\gamma+1)}e^{-\delta t},
		\end{split}
	\end{equation*}
	then arguing as in \eqref{0430-1} and \eqref{0430-2}, we conclude that
	\begin{equation*}
		\liminf_{t\to \infty} \mathbf{E}\bigl[|y(t)|^2\bigr]\geq \frac{1}{8A} \frac{\|g(z^*)\|_{{\rm HS}}^2}{1+|y^*|^{2\gamma}+|u^*|^{2\gamma}+\|g(z^*)\|_{{\rm HS}}^{2\gamma}}.
	\end{equation*}
\end{proof}

\section{Simulations}
In this section, we consider a class of third-order nonlinear stochastic systems with parameter uncertainty
	\begin{align}\label{88}\begin{cases}
			\mathrm{d}x_{1}  =x_{2}\mathrm{d}t\\
\mathrm{d}x_{2}  =x_{3}\mathrm{d}t\\
			\mathrm{d}x_{3} =f(x_1,x_2,x_3;u)\mathrm{d}t+g(x_1,x_2,x_3)\mathrm{d}B_t
\end{cases}
	\end{align}
where $(x_1,x_2,x_3)\in\mathbb{R}^3,~u\in\mathbb{R}^1$ and the unknown functions $f$ and $g$ take the following forms
 \begin{align*}
&f=a\sin x_1+bx_2+cx_3+d+u+\mu \tanh (u),~~~g=\sigma.
 \end{align*}
 Here $a$, $b$, $c$, $d$, $\mu$ and $\sigma$ are six \emph{unknown} system parameters which satisfy
\begin{align}\label{40}|a|\vee |b|\vee |c|\le 1/2,~~d\in\mathbb{R},~~\mu\geq 0,~~\sigma \in \mathbb{R}.\end{align}
 From the expressions of $f$ and $g$ and (\ref{40}), it is easy to obtain
 \begin{align*}
 &\Big|\frac{\partial f}{\partial x_1}\Big|=|a\cos x_1|\le 1/2,~~\Big|\frac{\partial f}{\partial x_2}\Big|\le 1/2,~~
 \Big|\frac{\partial f}{\partial x_3}\Big|\le 1/2,\\
&\frac{\partial f}{\partial u}=1+\mu \left(1-\tanh^2 (u)\right)\geq 1,~~\frac{\partial g}{\partial x_1}=0,~~\frac{\partial g}{\partial x_2}=0,
\end{align*}
which in turn gives
$$|f(x;u)-f(y;u)|\le \sqrt{3}|x-y|/2, ~~|g(x)-g(y)|=0.$$
Hence, Assumptions 1-2 hold with $L=\sqrt 3/2$ and $M=0$.
Now, suppose the control variable $u$ takes the form
\begin{align}\label{89}\begin{split}
u(t)&=k_{0} \!\int_{0}^{t}\! e(s)\mathrm{d}s+k_{1} e(t)+k_{2} \dot{e}(t)+k_3\ddot e(t)\\
e(t)&=y^*-x_1(t).\end{split}
\end{align}
Then it follows from Theorem 1 that, there exist positive constants $C_1$, $C_2$ and $\lambda$  such that \begin{align*}
		\mathbf{E}\left[|e(t)|^2\right]\leq C_1\left[|x(0)-z^*|^2+|u^*|^2\right]e^{-\lambda t}+ C_2\|g(z^*)\|_{{\rm HS}}^2,
	\end{align*}
if the extended PID parameters $(k_0,k_1,k_2,k_3)$ satisfy the following inequalities:
\begin{align}\label{a11}
\min\Big\{k_0^2,~k_{1}^2\!-\!2k_{0}k_{2},~k_{2}^2\!-\!2k_{1}k_{3},~ k_3^2\!-\!k_{2}\Big\}>\sqrt 3\bar k/2
\end{align}
where $\bar k=k_0+k_1+k_2+k_3$. We assume the parameters $(k_0,k_1,k_2,k_3)$ are given by
$$k_0=k_3=k,~~~k_1=k_2=2.5k,$$
then (\ref{a11}) reduces to
$$\min\Big\{k^2,1.25k^2,k^2-2.5k\Big\} =k^2-2.5k>\frac{7\sqrt 3}{2} k.$$
Therefore, the control system (\ref{88}) and (\ref{89}) will be globally stable if $$k>(5+7\sqrt 3)/2\approx 8.56.$$
In the following simulations, the extended PID parameters are chosen as $(k_0,k_1,k_2,k_3)=(8.6,21.5,21.5,8.6)$.
In Fig. \ref{fig1}, we illustrate the tracking performance under \emph{different system parameters} $(a,b,c,d,\mu,\sigma)$.  One can see that the given extended PID controller (\ref{89}) has the ability to stabilize and
regulate the control system (\ref{88}), even if the system parameters $(a, b, c, d,\mu,\sigma)$ vary in a wide range. This demonstrates that the extended PID control has strong robustness with respect to the system uncertainties. In  Fig. \ref{fig2}, we want to see how the constant $\sigma $ (which reflects the noise intensity) will affect the tracking performance. It can be seen that the steady-state tracking error will be \emph{small} if $\sigma$  is small. In particular, the tracking error will converge to zero if $\sigma=0$.
In Fig. \ref{fig3}, one can also see that large random disturbance may lead to the high-frequency oscillation of the control input, and increase the variance of the controller input.
 \begin{figure}[htbp]
 \begin{minipage}[t]{1\linewidth}
 \includegraphics[width=3.5in, height=3in]{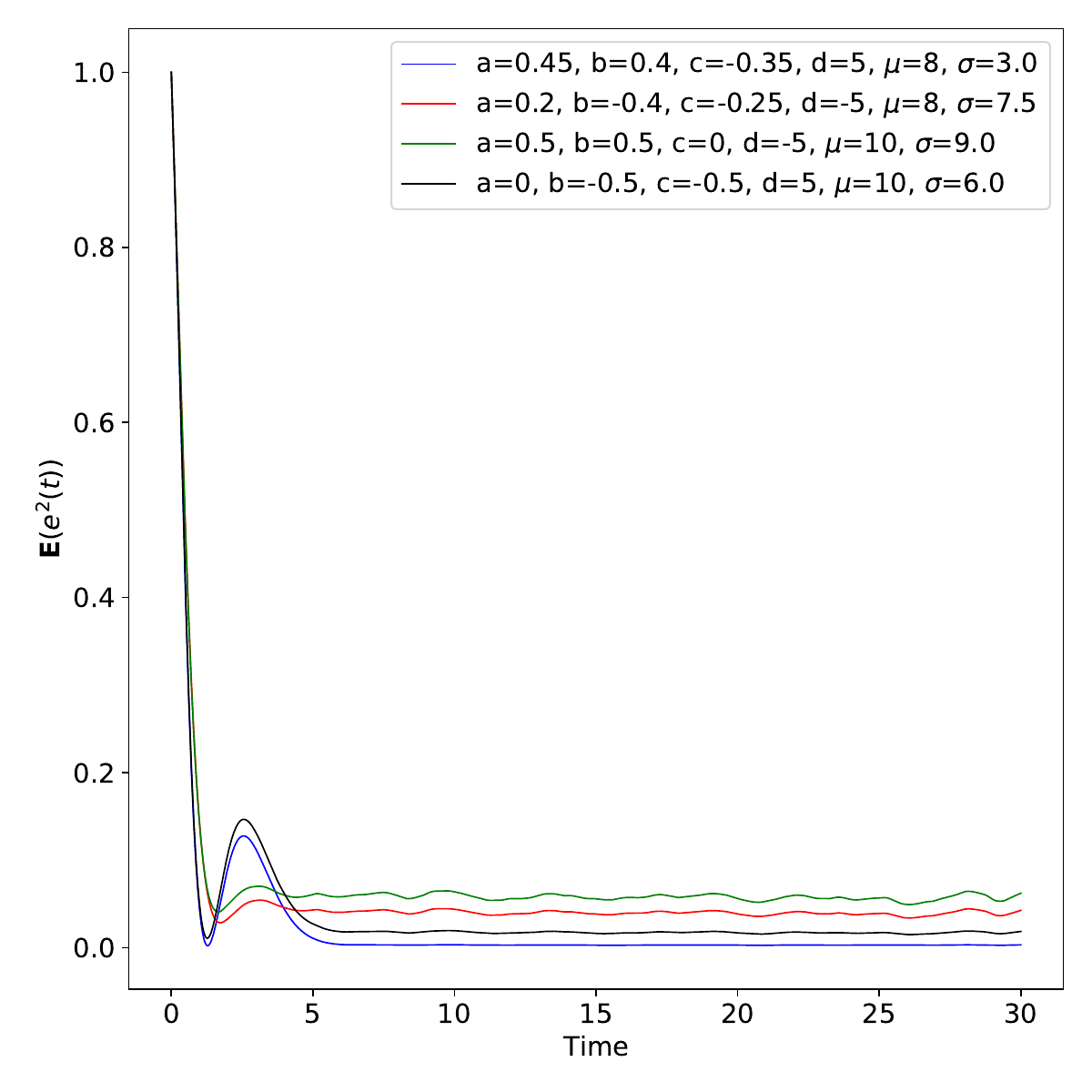}
 \caption{Curves of $\mathbf{E}|e(t)|^2$ under different system parameters. The system initial state $(x_1(0),x_2(0),x_3(0))$ is $(0.5,0.5,0.3)$.}
 \label{fig1}
 \end{minipage}
 \end{figure}

 \begin{figure}[htbp]
 \begin{minipage}[t]{1\linewidth}
 \includegraphics[width=3.5in, height=3in]{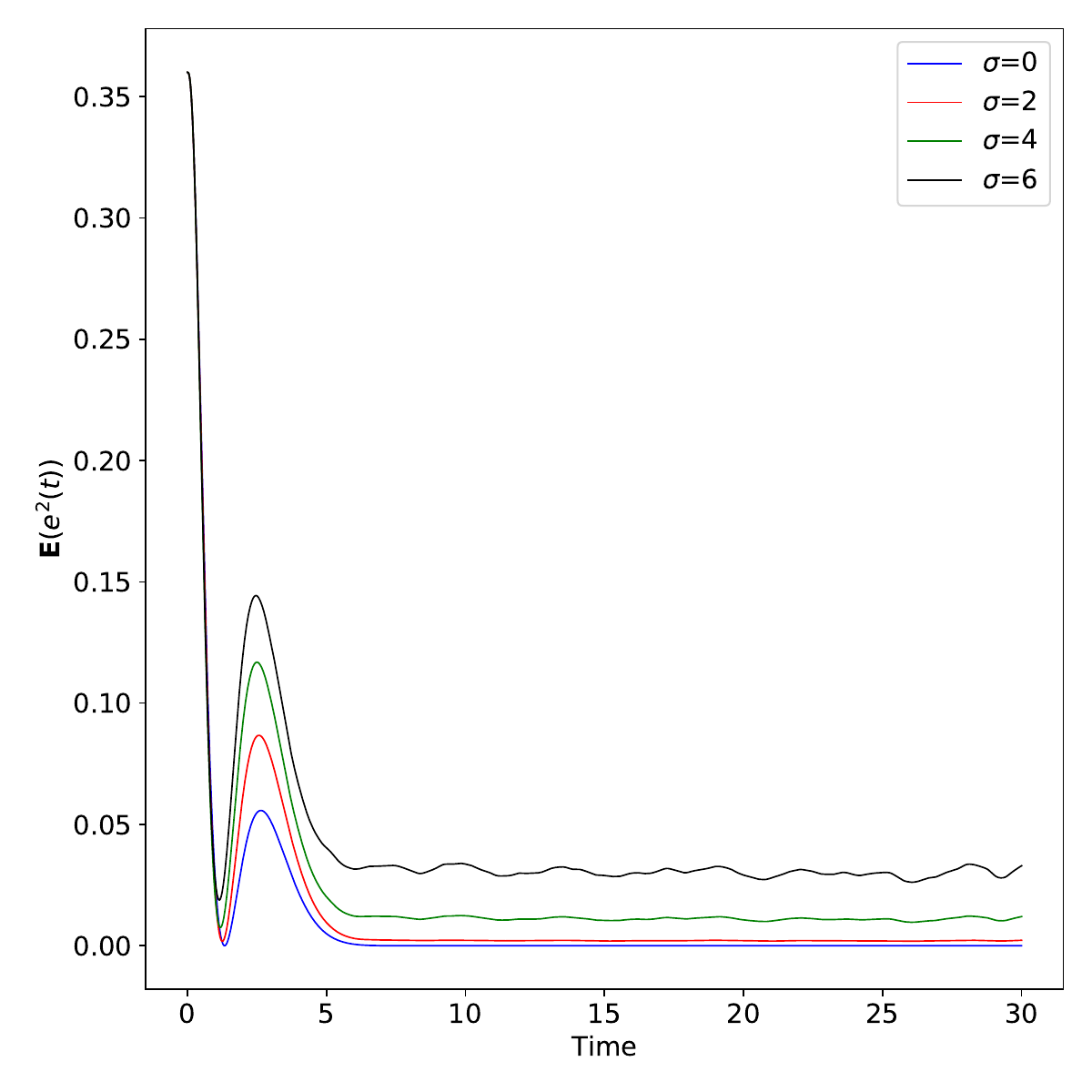}
 \caption{Curves of $\mathbf{E}|e(t)|^2$ under different  $\sigma$. The system parameters $(a,b,c,d,\mu)=(0.4,-0.3,0.5,6,5.2)$, and the initial state $(x_1(0),x_2(0),x_3(0))$ is $(0.9,0,0.1)$.}
 \label{fig2}
 \end{minipage}
 \end{figure}

 \begin{figure}[htbp]
 \begin{minipage}[t]{1\linewidth}
 \includegraphics[width=3.5in, height=3in]{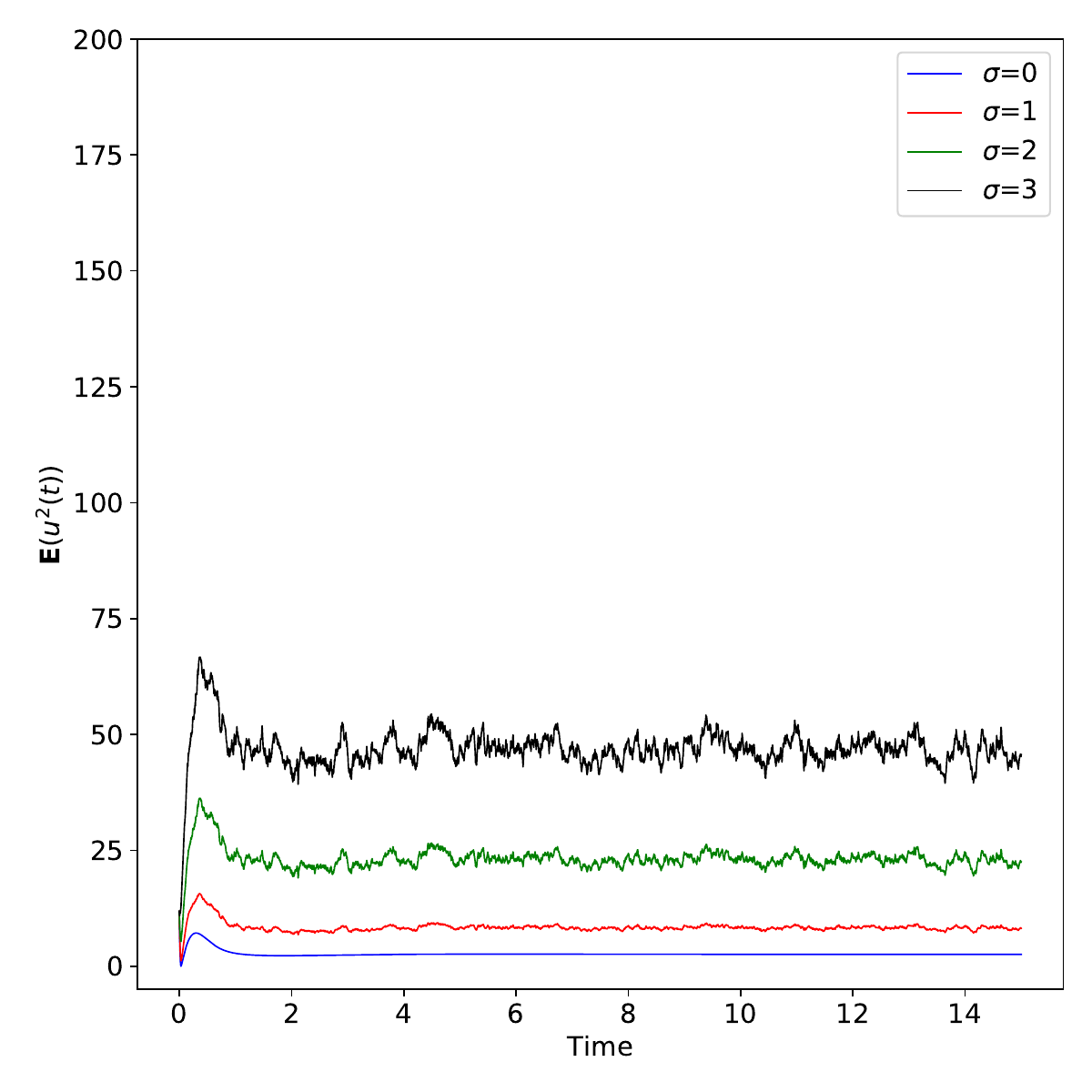}
 \end{minipage}
\begin{minipage}[t]{1\linewidth}
\includegraphics[width=3.5in, height=3in]{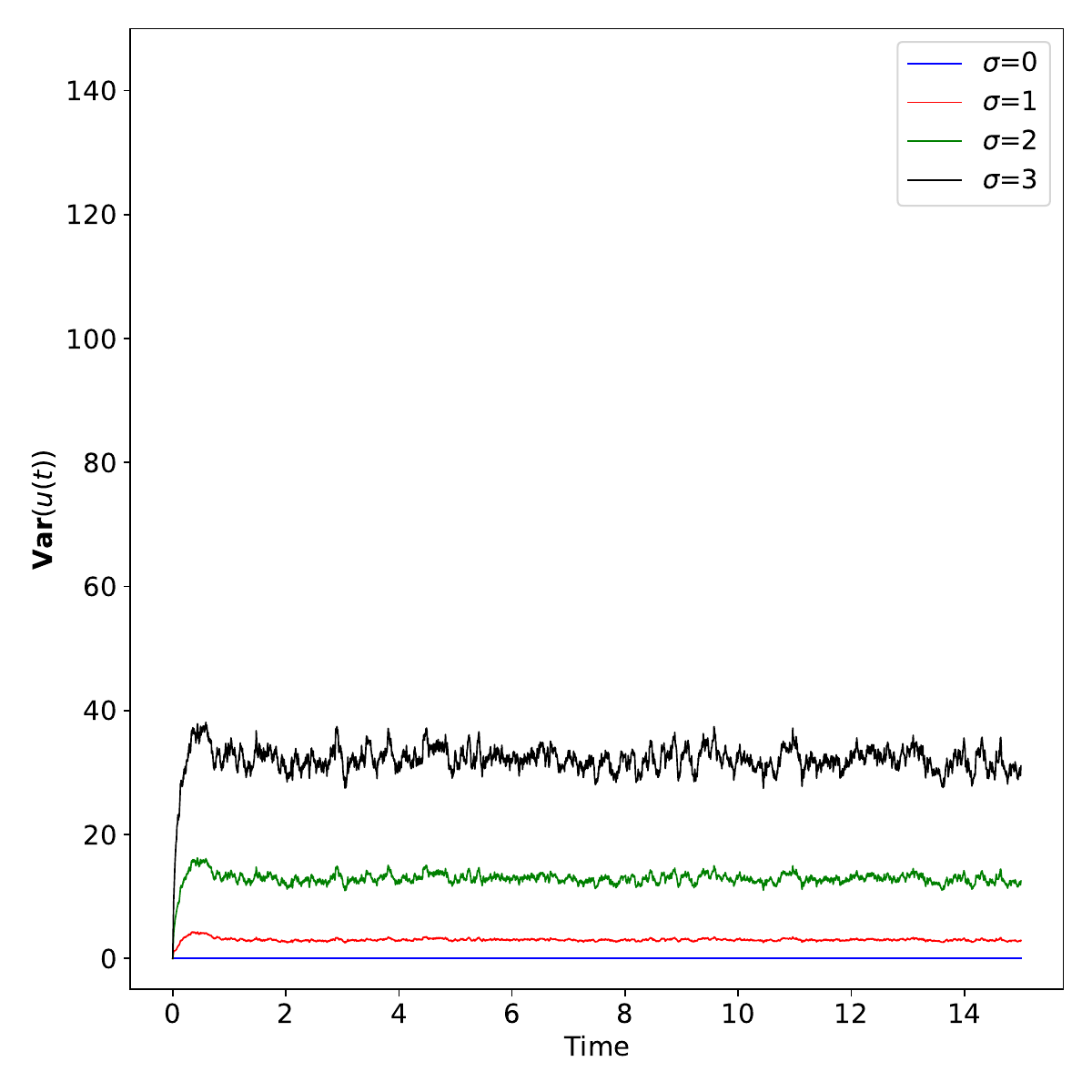}
\caption{Curves of $\mathbf{E}|u(t)|^2$ and $\text{Var}(u(t))$ under different $\sigma$. The system parameters $(a,b,c,d,\mu)=(0.4,-0.3,0.5,6,5.2)$, and the initial state $(x_1(0),x_2(0),x_3(0))$ is $(1.3,0,0.1)$.}
\label{fig3}
\end{minipage}
\end{figure}

\section{Conclusion}
This paper investigates the stabilization problem of the extended  PID control for a class of nonaffine stochastic systems with a general relative degree.  We have provided a mathematical theory together with explicit design formulae of the extended PID control for a basic class of nonlinear stochastic systems.  To be specific, an  $(n+1)$-dimensional unbounded set is constructed based on the Lipschitz constants of the drift and diffusion terms, from which the stabilizing extended PID parameters can be chosen arbitrarily.  Moreover, the steady-state tracking error  is proved to be bounded by the noise intensity at the setpoint, which can also be made arbitrarily small if one chooses the  extended PID parameters suitably large.  These theoretical results and design methods presented in the paper may provide reliable design guidance for control engineers in practical applications. For further investigation, it would be meaningful to consider more practical situations including time-delay and saturation, etc.

\section*{Appendix}
Consider the following stochastic differential equation:
\begin{equation}
	\begin{cases}
		\mathrm{d}x(t)& =b(t,x(t))\mathrm{d}t+\sigma(t,x(t))\mathrm{d}B(t),\\
		x(0)& =x_{0},
	\end{cases}\label{eq:sto}
\end{equation}
where $x(t)\in\mathbb{R}^{n}$ is the state, $B(t)\in\mathbb{R}^m$ is an $m$-dimensional
standard Brownian motion defined on a complete probability space $(\Omega,\mathcal{F},P)$
with $\{\mathcal{F}_{t}\}_{t\geq 0}$ being a natural filtration, and
$b\in C^{1}(\mathbb{R}^+\times\mathbb{R}^{n},\mathbb{R}^{n}),$ $\sigma\in C^{1}(\mathbb{R}^+\times\mathbb{R}^{n},\mathbb{R}^{n\times m})$ are nonlinear functions.

\begin{definition}\label{de1}
	Given a function $V\in C^{2}(\mathbb{R}^{n};\mathbb{R})$
	associated with the equation (\ref{eq:sto}). The differential
	operator $\mathcal{L}_t$ acting on $V$ is defined by
	\begin{align}\label{definition of operator L}
		\mathcal{L}_t V(x)=&\frac{\partial V}{\partial x}b(t,x)+\frac{1}{2}{\rm tr}\left\{\sigma(t,x)^{\top}\frac{\partial^{2}V}{\partial x^{2}}\sigma(t,x)\right\}.
	\end{align}
	If $b(t,x)$ and $\sigma(t,x)$ are independent of time $t$, the operator $\mathcal{L}_t$ will be denoted by $\mathcal{L}$ for simplicity.
\end{definition}

According to \cite[Theorem 3.5 and Remark 3.4]{khasminskii2012} up to a slight modification, we have the following lemma.
\begin{lemma}\label{lemma upper bound}
	If there exist a function $V\in C^{2}(\mathbb{R}^n;\mathbb{R}^+)$ and positive constants $\alpha,\beta$ such that $\liminf_{|x|\to \infty}V(x)=\infty$ and $\mathcal{L}_tV(x)\leq -\alpha V(x)+\beta$,
	then SDE \eqref{eq:sto} has a unique solution $(x(t))_{t\geq 0}$ such that
	\begin{equation*}
		\mathbf{E}V(x(t))\leq V(x(0))e^{-\alpha t}+\frac{\beta}{\alpha}.
	\end{equation*}
\vskip 0.2cm
\end{lemma}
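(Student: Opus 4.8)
The plan is to establish the estimate by applying It\^o's formula to the exponentially weighted Lyapunov function $e^{\alpha t}V(x(t))$ and controlling the resulting stochastic integral by localization, while simultaneously using the coercivity $\liminf_{|x|\to\infty}V(x)=\infty$ to rule out finite-time explosion. First I would note that since $b$ and $\sigma$ are $C^1$, hence locally Lipschitz, standard SDE theory furnishes a unique local solution $(x(t))_{0\le t<\tau_\infty}$ up to an explosion time $\tau_\infty=\lim_{N\to\infty}\tau_N$, where $\tau_N:=\inf\{t\ge 0:|x(t)|\ge N\}$. All subsequent estimates will be carried out on the stopped intervals $[0,t\wedge\tau_N]$ and then passed to the limit $N\to\infty$.

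Next I would apply It\^o's formula to $s\mapsto e^{\alpha s}V(x(s))$ on $[0,t\wedge\tau_N]$, obtaining
\begin{equation*}
e^{\alpha(t\wedge\tau_N)}V(x(t\wedge\tau_N))=V(x(0))+\int_0^{t\wedge\tau_N}e^{\alpha s}\bigl(\alpha V(x(s))+\mathcal{L}_sV(x(s))\bigr)\mathrm{d}s+M_{t\wedge\tau_N},
\end{equation*}
where $M_{t\wedge\tau_N}:=\int_0^{t\wedge\tau_N}e^{\alpha s}\frac{\partial V}{\partial x}(x(s))\sigma(s,x(s))\mathrm{d}B(s)$. On $[0,t\wedge\tau_N]$ one has $|x(s)|\le N$, so the integrand of $M$ is bounded and $M_{t\wedge\tau_N}$ is a true martingale of zero mean. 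Invoking the hypothesis $\mathcal{L}_sV\le-\alpha V+\beta$, the drift integrand is at most $\beta e^{\alpha s}$, so taking expectations gives
\begin{equation*}
\mathbf{E}\bigl[e^{\alpha(t\wedge\tau_N)}V(x(t\wedge\tau_N))\bigr]\le V(x(0))+\frac{\beta}{\alpha}\bigl(e^{\alpha t}-1\bigr).
\end{equation*}

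From this uniform-in-$N$ bound I would extract two conclusions. Writing $v_N:=\inf_{|y|=N}V(y)$, which tends to $\infty$ by coercivity, and restricting to the event $\{\tau_N\le t\}$ on which $t\wedge\tau_N=\tau_N$, $e^{\alpha\tau_N}\ge 1$ and $V(x(\tau_N))\ge v_N$, the displayed bound yields $\mathbf{P}(\tau_N\le t)\le v_N^{-1}\bigl[V(x(0))+\tfrac{\beta}{\alpha}(e^{\alpha t}-1)\bigr]\to 0$ as $N\to\infty$; hence $\tau_\infty=\infty$ almost surely and the solution is global. Since then $t\wedge\tau_N\to t$ almost surely, Fatou's lemma (applicable because $V\ge 0$) applied to the same bound gives $e^{\alpha t}\mathbf{E}[V(x(t))]\le V(x(0))+\frac{\beta}{\alpha}(e^{\alpha t}-1)$, and dividing by $e^{\alpha t}$ produces $\mathbf{E}[V(x(t))]\le V(x(0))e^{-\alpha t}+\frac{\beta}{\alpha}(1-e^{-\alpha t})$, which is even slightly sharper than the claimed estimate.

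The hard part will be the bookkeeping of the localization: one must verify that the coercivity hypothesis is precisely what forces $\mathbf{P}(\tau_N\le t)\to 0$, and that non-explosion is confirmed before the Fatou passage to the limit is legitimate. These two steps are intertwined and constitute the only genuine subtlety, the remaining manipulations being routine applications of It\^o's formula and the martingale property.
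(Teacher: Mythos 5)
Your proof is correct. Note, however, that the paper never proves this lemma: it is simply quoted from Khasminskii's book (Theorem 3.5 and Remark 3.4 of the cited reference) ``up to a slight modification,'' so there is no in-paper argument to compare against. What you have written is precisely the standard Lyapunov-function proof underlying that cited result: localization by the exit times $\tau_N$, It\^o's formula applied to $e^{\alpha s}V(x(s))$ with the stopped stochastic integral a true martingale, the coercivity $\liminf_{|x|\to\infty}V(x)=\infty$ forcing $\mathbf{P}(\tau_N\le t)\to 0$ and hence non-explosion, and a final passage to the limit by Fatou's lemma. In effect you have supplied the self-contained proof that the paper outsources to the literature. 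The only point worth making explicit is uniqueness: pathwise uniqueness of the global solution follows from the local Lipschitz continuity of $b$ and $\sigma$ (both are $C^1$) combined with the non-explosion you established; with that remark added, the argument is complete, and your bound $\mathbf{E}[V(x(t))]\le V(x(0))e^{-\alpha t}+\frac{\beta}{\alpha}\bigl(1-e^{-\alpha t}\bigr)$ is indeed slightly sharper than the one stated in the lemma.
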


Moreover, by a similar argument as in \cite[Theorem 3.5]{khasminskii2012}, we also have the following prior estimates.

\begin{lemma}\label{lemma lower bound}
	Suppose SDE \eqref{eq:sto} has a unique solution $(x(t))_{t\geq 0}$. If there exist a nonnegative function $V\in C^{2}(\mathbb{R}^{n};\mathbb{R})$, a constant $\alpha>0$ and an adapted process $\beta\in L_{loc}^1([0,\infty)\times \Omega;\mathbb{R})$ such that $\mathcal{L}_tV(x)\geq -\alpha V(x)+\beta(t),$
	then we have
	\begin{equation*}
		\mathbf{E}V(x(t))\geq V(x(0))e^{-\alpha t}+\int_{0}^{t}e^{-\alpha (t-s)}\mathbf{E}\beta(s)\mathrm{d}s,
	\end{equation*}
	where $L_{loc}^1([0,\infty)\times \Omega;\mathbb{R})$ is the space of stochastic processes $\beta(\cdot)$ such that $\beta(t)\in L^1(\Omega), \ t\geq 0$ and
	\begin{equation*}
		\mathbf{E}\int_0^T|\beta(t)|\mathrm{d}t<\infty, \ \text{ for all } \ T>0.
	\end{equation*}
\vskip 0.2cm
\end{lemma}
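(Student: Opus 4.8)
The plan is to recast the differential inequality as a statement about a continuous local submartingale and then strip away the localization using the sign of $V$. First I would apply It\^o's formula to the exponentially weighted process $U(t):=e^{\alpha t}V(x(t))$ along the solution of \eqref{eq:sto}. Writing $\nabla V$ for the gradient of $V$ and recalling the definition of $\mathcal{L}_t$ in Definition \ref{de1}, this produces, almost surely,
$$
e^{\alpha t}V(x(t))=V(x(0))+\int_0^t e^{\alpha s}\bigl[\alpha V(x(s))+\mathcal{L}_sV(x(s))\bigr]\mathrm{d}s+M(t),
$$
where $M(t):=\int_0^t e^{\alpha s}\tfrac{\partial V}{\partial x}(x(s))\,\sigma(s,x(s))\,\mathrm{d}B(s)$ is a continuous local martingale with $M(0)=0$. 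The hypothesis $\mathcal{L}_sV(x)\geq -\alpha V(x)+\beta(s)$ forces the bracketed integrand to be at least $\beta(s)$, so $U(t)-\int_0^t e^{\alpha s}\beta(s)\,\mathrm{d}s$ equals $V(x(0))$ plus a nondecreasing (drift) process plus $M(t)$; that is, it is a continuous local submartingale.

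Next I would localize to turn $M$ into a genuine martingale. Define the exit times $\tau_N:=\inf\{t\ge 0:|x(t)|\ge N\}$. Because \eqref{eq:sto} is assumed to possess a unique, hence continuous and non-explosive, solution, one has $\tau_N\uparrow\infty$ almost surely. On $[0,\tau_N]$ the integrand of $M$ is bounded, as $\nabla V$ and $\sigma$ are continuous and therefore bounded on the ball of radius $N$, so $M(t\wedge\tau_N)$ is a true mean-zero martingale. Taking expectations in the stopped identity and invoking the differential inequality gives, for every $N$ and $t\ge 0$,
$$
\mathbf{E}\bigl[e^{\alpha(t\wedge\tau_N)}V(x(t\wedge\tau_N))\bigr]\ \ge\ V(x(0))+\mathbf{E}\int_0^{t\wedge\tau_N} e^{\alpha s}\beta(s)\,\mathrm{d}s .
$$
On the right-hand side the integral converges to $\int_0^t e^{\alpha s}\mathbf{E}\beta(s)\,\mathrm{d}s$ as $N\to\infty$ by dominated convergence and Fubini, since $\beta\in L^1_{\mathrm{loc}}$ ensures $\mathbf{E}\int_0^t e^{\alpha s}|\beta(s)|\,\mathrm{d}s<\infty$.

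For the left-hand side I would split on the events $\{\tau_N>t\}$ and $\{\tau_N\le t\}$. On $\{\tau_N>t\}$ one has $t\wedge\tau_N=t$, and since $V\ge 0$ the truncated expectations $\mathbf{E}[e^{\alpha t}V(x(t))\mathbf{1}_{\{\tau_N>t\}}]$ increase to $\mathbf{E}[e^{\alpha t}V(x(t))]$ by monotone convergence, while the residual contribution $\mathbf{E}[e^{\alpha\tau_N}V(x(\tau_N))\mathbf{1}_{\{\tau_N\le t\}}]$ is nonnegative, again because $V\ge 0$. Combining, one obtains the chain
$$
\mathbf{E}\bigl[e^{\alpha t}V(x(t))\bigr]\ \ge\ V(x(0))+\mathbf{E}\int_0^{t\wedge\tau_N} e^{\alpha s}\beta(s)\,\mathrm{d}s-\mathbf{E}\bigl[e^{\alpha\tau_N}V(x(\tau_N))\mathbf{1}_{\{\tau_N\le t\}}\bigr].
$$

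The decisive point, and the step I expect to be the main obstacle, is to show that this last residual term tends to $0$ as $N\to\infty$: this is a uniform-integrability statement for the family $\{e^{\alpha(t\wedge\tau_N)}V(x(t\wedge\tau_N))\}_N$, and it is precisely here that nonnegativity of $V$ together with non-explosion, which delivers $\mathbf{1}_{\{\tau_N\le t\}}\to 0$ almost surely, must be exploited. The inequality is vacuous when $\mathbf{E}V(x(t))=\infty$ (the left-hand side is then infinite while the right-hand side is finite), so only the case $\mathbf{E}V(x(t))<\infty$ needs argument, and there the available second-moment (Lyapunov) bounds furnish the uniform integrability that kills the residual. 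Once the residual vanishes, letting $N\to\infty$ yields $\mathbf{E}[e^{\alpha t}V(x(t))]\ge V(x(0))+\int_0^t e^{\alpha s}\mathbf{E}\beta(s)\,\mathrm{d}s$, and multiplying through by $e^{-\alpha t}$ gives the asserted lower bound.
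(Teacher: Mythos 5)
Your skeleton is exactly the argument the paper intends: the paper gives no written proof of this lemma, saying only that it follows ``by a similar argument as in \cite[Theorem 3.5]{khasminskii2012}'', and that argument is the one you spelled out --- It\^o's formula applied to $e^{\alpha t}V(x(t))$, localization by the exit times $\tau_N$, expectations in the stopped identity, and dominated convergence (via $\beta\in L^1_{loc}$) on the right-hand side. All of that is correct, and you have also correctly diagnosed that the whole difficulty sits in the residual term $\mathbf{E}\bigl[e^{\alpha\tau_N}V(x(\tau_N))\mathbf{1}_{\{\tau_N\le t\}}\bigr]$, which for a \emph{lower} bound cannot simply be discarded the way it is in the companion upper-bound lemma (there Fatou/monotone convergence suffices precisely because the residual enters with the favorable sign).

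The gap is your resolution of that residual. Within the lemma's stated hypotheses there are no ``available second-moment (Lyapunov) bounds'': the lemma assumes only $V\ge 0$, $\mathcal{L}_tV\ge -\alpha V+\beta$, $\beta\in L^1_{loc}$, and existence and uniqueness of the solution. Finiteness of $\mathbf{E}V(x(t))$ at the single deterministic time $t$ gives no control of $V$ at the random times $\tau_N\le t$, so it does not yield the uniform integrability you invoke, and at this level of generality the step genuinely fails: take $n=m=1$, $\mathrm{d}x=x^2\,\mathrm{d}B_t$ with $x(0)=x_0>0$ (so $x(t)=1/R(t)$ with $R$ a Bessel(3) process, a strict local martingale) and $V(x)=\sqrt{1+x^2}$. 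Then $V$ is convex, so $\mathcal{L}V=\tfrac12 x^4 V''(x)\ge 0\ge -\alpha V(x)$ and the hypotheses hold with $\beta\equiv 0$ for every $\alpha>0$; yet $\mathbf{E}V(x(1))\le 1+\mathbf{E}[x(1)]\approx 1+\sqrt{2/\pi}$ stays bounded as $x_0\to\infty$, while the asserted lower bound $V(x_0)e^{-\alpha}$ is arbitrarily large --- the stopped family is not uniformly integrable and the residual carries mass away. What actually closes the argument in every application the paper makes of the lemma is the stronger estimate $\mathbf{E}\bigl[\sup_{s\le t}V(x(s))\bigr]<\infty$, available there because $V$ is quadratic and the closed-loop coefficients are globally Lipschitz (Burkholder--Davis--Gundy plus Gronwall); with it, $e^{\alpha\tau_N}V(x(\tau_N))\mathbf{1}_{\{\tau_N\le t\}}\le e^{\alpha t}\sup_{s\le t}V(x(s))\,\mathbf{1}_{\{\tau_N\le t\}}\to 0$ almost surely with an integrable dominating function, and dominated convergence kills the residual. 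So your proof needs either this supremum-moment hypothesis added explicitly or a restriction to the quadratic-$V$, Lipschitz-coefficient setting in which the lemma is used --- a defect, to be fair, that the lemma's one-line citation in the paper shares.
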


The real polynomial $f(s)=a_0+a_1s+\cdots+ a_ns^n$ is called stable if every one of its roots has negative part.
It is known that the stability of the positive-coefficients polynomial is determined by the $n-2$ determining coefficients $\alpha_i$, which are defined by
\[
\alpha_i:=\frac{a_{i-1}a_{i+2}}{a_ia_{i+1}}, ~i=1,\dots,n-2.\]
 The following lemma provides a sufficient condition for stability of the polynomial $f(s)$ for the case $n\geq 5$, see Theorem 5 in \cite{nie1987new}.
\begin{lemma} \label{lemma2}For $n\geq 5$, if every determining coefficients of $f(s)$ is less than $1/2$, and any three successive  determining coefficients satisfy the following condition
\[\alpha_i+\alpha_{i-1}\alpha_{i}\alpha_{i+1}\le \frac{1}{2},~~~i=2,\cdots,n-3\]
then the polynomial $f(s)$ is stable.
\end{lemma}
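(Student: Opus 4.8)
The plan is to prove stability by induction on the degree $n$, performing one step of the Routh–Hurwitz reduction at each stage and tracking how the determining coefficients transform. Throughout I assume the leading coefficient is positive, and I exploit that the $\alpha_i$ are invariant under the rescalings $a_k\mapsto c\,a_k$ and $a_k\mapsto \lambda^k a_k$. Writing $\rho_j:=a_j/a_{j+1}$, one has the convenient identity $\alpha_i=\rho_{i-1}/\rho_{i+1}$, which lets me express every quantity that appears purely in terms of the determining coefficients.

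First I would set up a single elimination step. Removing the leading term $a_n s^n$ by the Routh scheme produces a polynomial $g(s)$ of degree $n-1$ whose coefficients interleave the \emph{inherited} coefficients $a_{n-1},a_{n-3},\dots$ (unchanged) with the \emph{modified} coefficients
\[
c_{n-2-2m}=a_{n-2-2m}\Bigl(1-\tfrac{a_n}{a_{n-1}}\cdot\tfrac{a_{n-3-2m}}{a_{n-2-2m}}\Bigr),
\]
where the correction factor $\frac{a_n}{a_{n-1}}\cdot\frac{a_{n-3-2m}}{a_{n-2-2m}}=\rho_{n-3-2m}/\rho_{n-1}$ is, via the identity above, a \emph{product of consecutive determining coefficients} of $f$. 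Since each $\alpha_i<1/2$, every such product is strictly less than $1$, so all modified coefficients remain positive; this keeps the first column of the Routh array positive at this step and lets the reduction proceed. Stability of $f$ is then equivalent to stability of $g$ (Routh–Hurwitz: a positive pivot introduces no sign change, so $f$ and $g$ carry the same number of right-half-plane roots), reducing the problem to lower degree.

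The heart of the argument is to verify that $g$ again satisfies the hypotheses, so the induction continues. Among any four consecutive reduced coefficients exactly two are modified (they alternate by parity), so each determining coefficient of $g$ takes the form $\tilde\alpha_i=\alpha_i\,(1-\phi_i)/(1-\psi_i)$, with $\phi_i,\psi_i$ products of the original $\alpha$'s. I would then show that both required families of inequalities for $g$—namely $\tilde\alpha_i<1/2$ and the three-term bounds $\tilde\alpha_i+\tilde\alpha_{i-1}\tilde\alpha_i\tilde\alpha_{i+1}\le 1/2$—descend from the corresponding inequalities for $f$. This is exactly the step for which the hypothesis is engineered: the cubic combination $\alpha_i+\alpha_{i-1}\alpha_i\alpha_{i+1}$ is precisely what dominates the correction factors $(1-\phi_i)/(1-\psi_i)$ and keeps the transformed quantities below $1/2$. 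I expect this self-reproduction of the inequalities under reduction to be the main obstacle; it is a careful but elementary estimate in the $\alpha_i$, heaviest near the top of the array, where the correction factors are single determining coefficients (hence largest), and progressively milder deeper down, where they are small products and $\tilde\alpha_i\approx\alpha_i$.

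Finally I would dispatch the base cases $n=3$ and $n=4$ from the explicit Routh–Hurwitz conditions. For the cubic, stability is equivalent to $\alpha_1<1$, implied by $\alpha_1<1/2$; for the quartic, the binding Hurwitz inequality rearranges (dividing by $a_3a_2a_1$) to $\alpha_1+\alpha_2<1$, again implied by $\alpha_i<1/2$. The inductive step lowers $n$ by one while preserving all hypotheses until a base case is reached, and lifting stability back up through each positive-pivot reduction yields the stability of the original $f(s)$.
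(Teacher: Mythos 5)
First, note that the paper offers no proof of this lemma at all: it is quoted verbatim as Theorem 5 of Nie and Xie \cite{nie1987new}, so your argument has to stand entirely on its own. It does not, because the single step you defer (``I would then show\ldots'', ``I expect\ldots'') is precisely the step that fails: the hypotheses of the lemma are \emph{not} reproduced by one step of Routh reduction. Carry out your own calculation. With $\rho_j=a_j/a_{j+1}$ and $\alpha_i=\rho_{i-1}/\rho_{i+1}$, one reduction step leaves the coefficients $a_k$ with $k\equiv n-1 \pmod 2$ unchanged and replaces the others (for $1\le k\le n-2$) by $a_k(1-\pi_k)$, where $\pi_k=\rho_{k-1}/\rho_{n-1}=\alpha_k\alpha_{k+2}\cdots\alpha_{n-2}$ is a product of \emph{same-parity} (not consecutive) determining coefficients. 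For the top determining coefficient of the reduced polynomial $g$ (indices $b_{n-4},b_{n-3},b_{n-2},b_{n-1}$, of which $b_{n-4}$ and $b_{n-2}$ are the modified ones) this gives
\[
\tilde\alpha_{n-3}=\alpha_{n-3}\,\frac{1-\pi_{n-4}}{1-\pi_{n-2}}
=\alpha_{n-3}\,\frac{1-\alpha_{n-4}\alpha_{n-2}}{1-\alpha_{n-2}}\;>\;\alpha_{n-3},
\]
which approaches $2\alpha_{n-3}$ when $\alpha_{n-2}$ is near $1/2$ and $\alpha_{n-4}$ is small. Concretely, for degree $6$ take $(\alpha_1,\alpha_2,\alpha_3,\alpha_4)=(0.3,\,0.05,\,0.45,\,0.49)$, realized for instance by $a_6=a_5=a_4=1$, $a_3=0.49$, $a_2=0.45\,a_3$, $a_1=0.0245\,a_2$, $a_0=0.135\,a_1$. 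All four determining coefficients are below $1/2$, and the two three-term bounds read $0.05+0.007\le 1/2$ and $0.45+0.011\le 1/2$, so the hypotheses of the lemma hold; yet after one reduction the new top determining coefficient is $\tilde\alpha_{3}=0.45\,(1-0.0245)/(1-0.49)\approx 0.86>1/2$. Your induction therefore cannot proceed past the first step. (In general, the $\tilde\alpha_i$ with $i\equiv n\pmod 2$ decrease, but those with $i\equiv n-1\pmod 2$ all increase; the top one increases the most.)

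The parts you do verify are correct but are not where the difficulty lies: positivity of the reduced coefficients, equivalence of stability under a positive-pivot reduction, and the base cases $n=3,4$ (cubic: $\alpha_1<1$; quartic: $\alpha_1+\alpha_2<1$) are all standard. The entire content of the Nie--Xie theorem is exactly the part you assumed rather than supplied: one must either exhibit a weaker property that (i) follows from the stated hypotheses, (ii) genuinely reproduces itself under the Routh step, and (iii) still forces stability at low degree, or else argue by a route that avoids such an induction altogether. The counterexample above shows that the stated hypotheses themselves are not such a property, and your assertion that the cubic bound $\alpha_i+\alpha_{i-1}\alpha_i\alpha_{i+1}\le 1/2$ ``dominates the correction factors'' is precisely what is false at the top of the array, where the correction factor $1/(1-\alpha_{n-2})$ can be as large as $2$.
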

By Lemma \ref{lemma2}, we can obtain the following result.

\begin{lemma}\label{lemma hurwitz}
For $n\geq 4$, suppose that the parameters $k_0,\cdots,k_n$ are all positive and satisfy (\ref{solution}), then the matrix $A$ defined by (\ref{A}) is Hurwitz.
\end{lemma}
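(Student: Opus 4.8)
The plan is to reduce the Hurwitz property of $A$ to the stability of its characteristic polynomial and then invoke the Nie--Xu criterion (Lemma \ref{lemma2}). First I would note that $A$ in (\ref{A}) is the $(n+1)\times(n+1)$ companion matrix whose characteristic polynomial is
$$p(s)=s^{n+1}+k_ns^n+k_{n-1}s^{n-1}+\cdots+k_1s+k_0.$$
Writing $p(s)=\sum_{j=0}^{n+1}a_js^j$ with $a_j=k_j$ for $0\le j\le n$ and $a_{n+1}=1$ (so it is convenient to set $k_{n+1}:=1$), all coefficients are positive, and the degree is $n+1\ge 5$ precisely because $n\ge 4$; hence Lemma \ref{lemma2} is applicable. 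It then remains to check the two hypotheses of that lemma for the determining coefficients $\alpha_i=\frac{a_{i-1}a_{i+2}}{a_ia_{i+1}}=\frac{k_{i-1}k_{i+2}}{k_ik_{i+1}}$, $1\le i\le n-1$.

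The key reformulation is to pass from (\ref{solution}) to the consecutive ratios $\rho_j:=k_{j+1}/k_j$ (with $\rho_n=1/k_n$ since $k_{n+1}=1$). Since $\bar k>0$, condition (\ref{solution}) yields $k_j^2>2k_{j-1}k_{j+1}$ for $1\le j\le n-1$ and $k_n^2>k_{n-1}$; dividing the former by $k_{j-1}k_j$ and the latter by $k_{n-1}k_n$ gives $\rho_j<\tfrac12\rho_{j-1}$ for $1\le j\le n-1$ and $\rho_n<\rho_{n-1}$. Observing that $\alpha_i=\rho_{i+1}/\rho_{i-1}$, for $1\le i\le n-2$ the geometric decay gives $\rho_{i+1}<\tfrac12\rho_i<\tfrac14\rho_{i-1}$, hence $\alpha_i<\tfrac14$; for the boundary index $i=n-1$ one chains $\rho_n<\rho_{n-1}<\tfrac12\rho_{n-2}$ to obtain $\alpha_{n-1}<\tfrac12$. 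Thus every determining coefficient is less than $\tfrac12$, which is the first hypothesis of Lemma \ref{lemma2}.

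For the second hypothesis I would verify $\alpha_i+\alpha_{i-1}\alpha_i\alpha_{i+1}\le\tfrac12$ for $2\le i\le n-2$. On this range $i\le n-2$, so $\alpha_i<\tfrac14$, while the three factors $\alpha_{i-1},\alpha_i,\alpha_{i+1}$ are all determining coefficients and hence less than $\tfrac12$. Therefore
$$\alpha_i+\alpha_{i-1}\alpha_i\alpha_{i+1}<\frac14+\frac12\cdot\frac14\cdot\frac12=\frac{5}{16}<\frac12,$$
so the condition holds. Lemma \ref{lemma2} then implies that $p(s)$ is stable, i.e. all its roots have negative real part, which is exactly the statement that $A$ is Hurwitz.

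I expect the main obstacle to be the careful treatment of the boundary determining coefficient $\alpha_{n-1}$: the last inequality in (\ref{solution}) only provides $k_n^2>k_{n-1}$ (equivalently $\rho_n<\rho_{n-1}$), a weaker estimate than the factor-$\tfrac12$ decay available in the interior, so one can only conclude $\alpha_{n-1}<\tfrac12$ rather than $<\tfrac14$. The point to verify is that this weaker bound still suffices: the index $i$ in the triple-product condition runs only up to $n-2$, so $\alpha_{n-1}$ never plays the role of the \emph{middle} factor and appears only as an outer factor (when $i=n-2$) or in the first hypothesis, where the bound $<\tfrac12$ is exactly what Lemma \ref{lemma2} demands.
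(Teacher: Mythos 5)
Your proposal is correct and takes essentially the same route as the paper: pass to the characteristic polynomial $s^{n+1}+k_ns^n+\cdots+k_0$, invoke the Nie--Xie criterion (Lemma \ref{lemma2}) since $n+1\geq 5$, and bound the determining coefficients by $\tfrac14$ in the interior and $\tfrac12$ at the boundary index $n-1$, exactly as the paper does (your ratio notation $\rho_j=k_{j+1}/k_j$ is a cosmetic repackaging of the paper's direct multiplication of the inequalities $k_i^2>2k_{i-1}k_{i+1}$). If anything, your treatment of the triple-product condition is slightly more careful on indices: you correctly let $i$ run up to $n-2$ for the degree-$(n+1)$ polynomial, so that $\alpha_{n-1}$ can appear as an outer factor with only the bound $\tfrac12$ (the paper writes the range as $2\le i\le n-3$, an apparent index slip, though its numerical bound $\tfrac{9}{32}$ covers that case too).
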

\begin{proof}
First, by definition (\ref{A}) of the matrix $A$, the characteristic polynomial of $A$ can be calculated as follows:
$$\det (sI_{n+1}-A)=s^{n+1}+k_ns^n+\cdots+k_1s+k_0,$$
which is a real polynomial with  positive-coefficients. Note that $n+1\geq 5$, we can apply Lemma \ref{lemma2} to analyze the stability of the characteristic polynomial of $A$.

First, note that the coefficient of $s^{n+1}$ is 1, it is not difficult to see the determining coefficients are
\begin{align*}
\alpha_i=\frac{k_{i-1}k_{i+2}}{k_ik_{i+1}}, ~i=1,\dots,n-2;~~\alpha_{n-1}=\frac{k_{n-2}}{k_{n-1}k_{n}}.
\end{align*}
Next, from $k_i^2>2k_{i-1}k_{i+1}$ and $k_{i+1}^2>2k_{i}k_{i+2}$, $i=1,\cdots,n-2$,  we obtain
$k_i^2k_{i+1}^2>4k_{i-1}k_ik_{i+1}k_{i+2}$, and thus
$$\alpha_i=\frac{k_{i-1}k_{i+2}}{k_ik_{i+1}}<\frac{1}{4},~1\le i\le n-2.$$
Besides, combine $k_{n-1}^2>2k_{n-2}k_{n}$ and $k_{n}^2>k_{n-1}$, we conclude that
$$\alpha_{n-1}=\frac{k_{n-2}}{k_{n-1}k_{n}}<\frac{1}{2}.$$
Therefore, every determining coefficients of $\det (sI_{n+1}-A)$ is less than $1/2$. In addition,
any three successive  determining coefficients satisfy
\[\alpha_i+\alpha_{i-1}\alpha_{i}\alpha_{i+1}\le \frac{1}{4}+\left(\frac{1}{4}\right)^2\frac{1}{2}=\frac{9}{32}<\frac{1}{2},~~~2\le i\le n-3,\]
which gives the Hurwitz property of $A$.
\end{proof}


\begin{thebibliography}{10}

\bibitem{astrom2001}
K.~J. Astrom and T.~Hagglund.
\newblock The future of {{PID}} control.
\newblock {\em Control Engineering Practice}, 9(11):1163--1175, November 2001.

\bibitem{astrom2006}
Karl~Johan {\AA}str{\"o}m, Tore H{\"a}gglund, and Karl~J. Astrom.
\newblock {\em Advanced {{PID}} Control}, volume 461.
\newblock {ISA-The Instrumentation, Systems, and Automation Society Research Triangle Park}, 2006.

\bibitem{bialkowski1993}
Wl~Bialkowski.
\newblock Dreams versus reality - a view from both sides of the gap.
\newblock {\em Pulp \& Paper-Canada}, 94(11):19--27, November 1993.

\bibitem{chen2022}
Jianqi Chen, Dan Ma, Yong Xu, and Jie Chen.
\newblock Delay robustness of {{PID}} control of second-order systems: Pseudoconcavity, exact delay margin, and performance tradeoff.
\newblock {\em IEEE Transactions on Automatic Control}, 67(3):1194--1209, March 2022.

\bibitem{chien1990}
I.-L. Chien.
\newblock Consider {{IMC}} tuning to improve controller performance.
\newblock {\em Chem. Eng. Prog.}, 86:33--41, 1990.

\bibitem{cong2017}
Xinrong Cong and Lei Guo.
\newblock {{PID}} control for a class of nonlinear uncertain stochastic systems.
\newblock In {\em 2017 {{IEEE}} 56th {{Annual Conference}} on {{Decision}} and {{Control}} ({CDC})}, {New York}, 2017. {IEEE}.

\bibitem{ender1993}
David~B. Ender.
\newblock Process control performance: {{Not}} as good as you think.
\newblock {\em Control Engineering}, 40(10):180--190, 1993.

\bibitem{fliess2013}
Michel Fliess and Cedric Join.
\newblock Model-free control.
\newblock {\em International Journal of Control}, 86(12):2228--2252, 2013.

\bibitem{guo2013}
Bao-Zhu Guo and Zhi-Liang Zhao.
\newblock On convergence of the nonlinear active disturbance rejection control for {{MIMO}} systems.
\newblock {\em SIAM Journal on Control and Optimization}, 51(2):1727--1757, 2013.

\bibitem{guo2020}
Lei Guo.
\newblock Feedback and uncertainty: {{Some}} basic problems and results.
\newblock {\em Annual Reviews in Control}, 49:27--36, 2020.

\bibitem{guo2021}
Lei Guo and Cheng Zhao.
\newblock Control of nonlinear uncertain systems by extended {{PID}} with differential trackers.
\newblock {\em Communications in Information and Systems}, 21(3):415--440, 2021.

\bibitem{khalil2000}
H.~K. Khalil.
\newblock Universal integral controllers for minimum-phase nonlinear systems.
\newblock {\em IEEE Transactions on Automatic Control}, 45(3):490--494, March 2000.

\bibitem{khasminskii2012}
Rafail Khasminskii.
\newblock {\em Stochastic Stability of Differential Equations}, volume~66 of {\em Stochastic {{Modelling}} and {{Applied Probability}}}.
\newblock {Springer, Heidelberg}, second edition, 2012.

\bibitem{killingsworth2006}
N.~J. Killingsworth and M.~Krstic.
\newblock {{PID}} tuning using extremum seeking - {{Online}}, model-free performance optimization.
\newblock {\em IEEE Control Systems Magazine}, 26(1):70--79, February 2006.

\bibitem{nie1987new}
Y.~Y. Nie and X.~K. Xie.
\newblock New criteria for polynomial stability.
\newblock {\em IMA Journal of Mathematical Control and Information}, 4(1):1--12, 1987.

\bibitem{odwyer2006}
Aidan O'Dwyer.
\newblock {{PI}} and {{PID}} controller tuning rules: An overview and personal perspective.
\newblock 2006.

\bibitem{romero2018}
Jose~Guadalupe Romero, Alejandro Donaire, Romeo Ortega, and Pablo Borja.
\newblock Global stabilisation of underactuated mechanical systems via {{PID}} passivity-based control.
\newblock {\em Automatica}, 96:178--185, October 2018.

\bibitem{samad2017}
Tariq Samad.
\newblock A survey on industry impact and challenges thereof.
\newblock {\em IEEE Control Systems Magazine}, 37(1):17--18, February 2017.

\bibitem{somefun2021}
Oluwasegun~Ayokunle Somefun, Kayode Akingbade, and Folasade Dahunsi.
\newblock The dilemma of {{PID}} tuning.
\newblock {\em Annual Reviews in Control}, 52:65--74, 2021.

\bibitem{song2017}
Yongduan Song, Xiucai Huang, and Changyun Wen.
\newblock Robust adaptive fault-tolerant {{PID}} control of {{MIMO}} nonlinear systems with unknown control direction.
\newblock {\em IEEE Transactions on Industrial Electronics}, 64(6):4876--4884, June 2017.

\bibitem{wang2020}
Honghai Wang, Qing-Long Han, Jianchang Liu, and Dakuo He.
\newblock Discrete-time filter proportional-integral-derivative controller design for linear time-invariant systems.
\newblock {\em Automatica}, 116:108918, June 2020.

\bibitem{yu2006}
Cheng-Ching Yu.
\newblock {\em Autotuning of {{PID}} Controllers: A Relay Feedback Approach}.
\newblock {Springer Science \& Business Media}, 2006.

\bibitem{zhang2019}
Jinke Zhang and Lei Guo.
\newblock Theory and design of {{PID}} controller for nonlinear uncertain systems.
\newblock {\em IEEE Control Systems Letters}, 3(3):643--648, July 2019.

\bibitem{zhang2022pid}
Jinke Zhang, Cheng Zhao, and Lei Guo.
\newblock On {PID} control theory for nonaffine uncertain stochastic systems.
\newblock {\em Journal of Systems Science and Complexity}, 36(1):165--186, 2023.

\bibitem{zhao2017}
Cheng Zhao and Lei Guo.
\newblock {{PID}} controller design for second order nonlinear uncertain systems.
\newblock {\em Science China-Information Sciences}, 60(2):022201, February 2017.

\bibitem{zhao2021}
Cheng Zhao and Lei Guo.
\newblock Control of nonlinear uncertain systems by extended {PID}.
\newblock {\em IEEE Transactions on Automatic Control}, 66(8):3840--3847, August 2021.

\bibitem{zhao2020}
Cheng Zhao and Lei Guo.
\newblock Towards a theoretical foundation of {{PID}} control for uncertain nonlinear systems.
\newblock {\em Automatica}, 142:110360, 2022.

\bibitem{zhao2023tracking}
Cheng Zhao and Shuo Yuan.
\newblock Tracking performance of {{PID}} for nonlinear stochastic systems.
\newblock {\em arXiv preprint arXiv:2303.10537}, 2023.

\bibitem{zhao2023}
Cheng Zhao and Yanbin Zhang.
\newblock Understanding the capability of {PD} control for uncertain stochastic systems.
\newblock {\em IEEE Transactions on Automatic Control}, pages 1--8, DOI:10.1109/TAC.2023.3266508, 2023.

\bibitem{ziegler1993}
Jg~Ziegler and Nb~Nichols.
\newblock Optimum settings for automatic controllers.
\newblock {\em Journal of Dynamic Systems Measurement and Control-Transactions of the Asme}, 115(2B):220--222, June 1993.

\end{thebibliography}

\end{document}